\newcommand{\pp}{ {\partial} }
\newcommand{\1}{\mathbf{1}}
\newcommand{\RR}{\mathbb{R}}
\newcommand{\la}{\lambda}
\newcommand{\R}{\mathbb R}
\newcommand{\bt}{\begin{theorem}}
\newcommand{\et}{\end{theorem}}
\newcommand{\bl}{\begin{lemma}}
\newcommand{\el}{\end{lemma}}
\newcommand{\bd}{\begin{definition}}
\newcommand{\ed}{\end{definition}}
\newcommand{\bc}{\begin{corollary}}
\newcommand{\ec}{\end{corollary}}
\newcommand{\bp}{\begin{proof}}
\newcommand{\ep}{\end{proof}}
\newcommand{\bx}{\begin{example}}
\newcommand{\ex}{\end{example}}
\newcommand{\bi}{\begin{exercise}}
\newcommand{\ei}{\end{exercise}}
\newcommand{\bo}{\begin{prop}}
\newcommand{\eo}{\end{prop}}
\newcommand{\br}{\begin{remark}}
\newcommand{\er}{\end{remark}}
\newcommand{\be}{\begin{equation}}
\newcommand{\ee}{\end{equation}}
\newcommand{\ba}{\begin{align}}
\newcommand{\ea}{\end{align}}
\newcommand{\bn}{\begin{enumerate}}
\newcommand{\en}{\end{enumerate}}
\newcommand{\bg}{\begin{align*}}
\newcommand{\bcs}{\begin{cases}}
\newcommand{\ecs}{\end{cases}}
\newcommand{\DD}{{\mathcal D}}
\newcommand{\TT}{\mathcal{T}}
\newcommand{\B}{ \mathcal{B} }
\newcommand{\bean}{\begin{eqnarray*}}
\newcommand{\eean}{\end{eqnarray*}}
\newtheorem{definition}{Definition}[section]
\newtheorem{theorem}{Theorem}[section]
\newtheorem{lemma}{Lemma}[section]
\newtheorem{prop}{Proposition}[section]
\newtheorem{remark}{Remark}[section]
\numberwithin{equation}{section}
\begin{document}
\title[Long-time behavior for HMF]{Trichotomy dynamics of the 1-equivariant harmonic map flow}

\author[J. Wei]{Juncheng Wei}
\address{\noindent
Department of Mathematics,
University of British Columbia, Vancouver, B.C., V6T 1Z2, Canada}
\email{jcwei@math.ubc.ca}

\author[Q. Zhang]{Qidi Zhang}
\address{\noindent
Department of Mathematics,
University of British Columbia, Vancouver, B.C., V6T 1Z2, Canada}
\email{qidi@math.ubc.ca}

\author[Y. Zhou]{Yifu Zhou}
\address{\noindent
Department of Mathematics,
Johns Hopkins University, 3400 N. Charles Street, Baltimore, MD 21218, USA}
\email{yzhou173@jhu.edu}

\begin{abstract}
For the 1-equivariant harmonic map flow from $\R^2$ into $S^2$
\begin{equation*}
\left\{
\begin{aligned}
&v_t=v_{rr}+\frac{v_r}{r} - \frac{\sin(2v)}{2r^2} , ~\quad(r,t)\in \R_+\times (t_0,+\infty),\\
&v(r,t_0)=v_0, \qquad\qquad\qquad\quad r\in \R_+,
\end{aligned}
\right.
\end{equation*}
we construct global growing, bounded and decaying solutions with the initial data $v_0(r)$ satisfying
$$v_0(0)=\pi ~\mbox{ and }~ v_0(r)\sim r^{1-\gamma} ~\mbox{ as }~ r\to+\infty, \quad  \gamma>1.$$
These global solutions exhibit the following trichotomy long-time asymptotic behavior
\begin{equation*}
\| v_r(\cdot,t) \|_{L^\infty ([0,\infty))} \sim
\begin{cases}
t^{\frac{\gamma-2}{2}}\ln t  ~&\mbox{ if }~ 1<\gamma<2,\\
1 ~&\mbox{ if }~ \gamma=2,\\
\ln t ~&\mbox{ if }~ \gamma>2,\\
\end{cases}
~\mbox{ as }~ t\to +\infty.
\end{equation*}
\end{abstract}
\maketitle


\medskip

\section{Introduction and Main Results}

We consider the harmonic map flow (HMF) from $\R^2$ into $S^2$
\begin{equation*}
\begin{cases}
u_t = \Delta u + |\nabla u|^2 u ~&\mbox{ in }~\R^2\times(0,+\infty),\\
u(\cdot,0)=u_0 ~&\mbox{ in }~\R^2.\\
\end{cases}
\end{equation*}
HMF formally corresponds to the negative $L^2$-gradient flow for the Dirichlet energy
$$
\mathcal E[u]=\int_{\R^2}|\nabla u|^2,
$$
which is decreasing along smooth solutions. A special class of solutions are given by the $k$-equivariant ansatz
$$
u(re^{i\theta},t)=\Big(\cos(k\theta)\sin v,\sin(k\theta)\sin v,\cos v\Big),
$$
and thus HMF gets reduced to a scalar equation for the polar angle
\begin{equation}\label{eqn-HMF}
\left\{
\begin{aligned}
&v_t=v_{rr}+\frac1r v_r -\frac{k^2\sin(2v)}{2r^2}, \quad (r,t)\in \R_+\times\R_+\\
&v(r,0)=v_0, ~\qquad\qquad\qquad\qquad r\in \R_+.\\
\end{aligned}
\right.
\end{equation}

\medskip

In the energy critical dimension $n=2$, the scaling invariance of the Dirichlet energy $\mathcal E[u]$ gives rise to the energy concentration and a natural question of singularity formation versus global regularity. Asymptotic profile decomposition has been studied in seminal works by Struwe \cite{Struwe1985CMH}, Qing \cite{Qing95CAG}, Ding-Tian \cite{DT95CAG}, Wang \cite{Wang96HJM}, Qing-Tian \cite{QT97CPAM} and Topping \cite{Topping04MathZ}. In a recent work \cite{LawrieHMF}, Jendrej and Lawrie proved that the bubble decomposition in the $k$-equivariant class can be in fact taken continuously in time. Finite time blow-up for the two-dimensional HMF has also received much attention since the work by Chang, Ding and Ye \cite{Chang92}. Formal prediction of singularity with quantized blow-up rates was made by van den Berg, Hulshof and King \cite{vandenBerg03}, and this was later rigorously proved by Rapha\"el and Schweyer \cite{Raphael13,Raphael14}. Beyond the equivariant class, multi-bubble blow-up at finite time was constructed recently by D\'avila, del Pino and Wei \cite{17HMF}.

\medskip

HMF is a borderline case of the Landau-Lifshitz-Gilbert equation (LLG)
$$
u_t=a(\Delta u+|\nabla u|^2 u)+b u\wedge \Delta u, \quad a^2+b^2=1,~ a\geq 0, ~b\in\R,
$$
which models the evolution of isotropic ferromagnetic spin fields. A series of works by Gustafson, Kang, Nakanishi and Tsai (in various combinations) \cite{Gustafson07, Gustafson08, Gustafson10}  aimed at investigating the behavior of the solutions to LLG near $k$-equivariant harmonic maps. They found, among other things,
that there is no finite time singularity for LLG with $k\ge 3$ and for HMF with $k=2$ near $k$-equivariant harmonic maps. Intriguingly, in the $2$-equivariant case, they classified  the dynamics of scaling parameter $\mu (t)$ of the map as
\begin{equation}\label{d2dynamics}
\log \mu (t)\sim \frac2\pi \int_1^{\sqrt t}\frac{v_1(s)}{s}ds,
\end{equation}
yielding trichotomy dynamics:asymptotical stability, infinite time blow-up and eternal oscillation, see \cite[Theorem 1.2]{Gustafson10}. Here, $v_1(r)$ is the first entry of the initial degree 2 map. However, the 1-equivariant case is left open, due to the very slow spatial decay of the harmonic map components. The goal of  this paper is to fill this gap.

\medskip

In this paper, we consider the two-dimensional HMF into $S^2$ in the 1-equivariant class
\begin{equation}\label{eqn-scalar}
\left\{
\begin{aligned}
&v_t=v_{rr}+\frac{v_r}{r} - \frac{\sin(2v)}{2r^2} , ~\quad(r,t)\in \R_+\times (t_0,+\infty),\\
&v(r,t_0)=v_0, \qquad\qquad\qquad\quad r\in \R_+,
\end{aligned}
\right.
\end{equation}
where $t_0>0$ is some large initial time. The aim of the paper is to understand possible long-term behavior of \eqref{eqn-scalar}, and the main result stated below depends precisely on the power decay rate of the initial data $v_0$.

\begin{theorem}\label{thm}
For $t_0$ sufficiently large, there exist initial data $v_0(r)$ with $v_0(0)=\pi$ and $v_0(r)\sim r^{1-\gamma}$ as $t\to +\infty$ for any $\gamma>1$ such that the global solutions to \eqref{eqn-scalar} satisfy the following
\begin{equation*}
\| v_r(\cdot,t) \|_{L^\infty ([0,\infty))} \sim
\begin{cases}
t^{\frac{\gamma-2}{2}}\ln t  ~&\mbox{ if }~ 1<\gamma<2,\\
1 ~&\mbox{ if }~ \gamma=2,\\
\ln t ~&\mbox{ if }~ \gamma>2,\\
\end{cases}
~\mbox{ as }~ t\to +\infty.
\end{equation*}
More precisely, the polar angle $v$ takes the form
$$
v(r,t)\sim \eta\left(\frac{r}{\sqrt t}\right)\left[\pi-2\arctan\left(\frac{r}{\mu(t)}\right)\right]
$$
with
\begin{equation*}
\mu(t)\sim
\begin{cases}
t^{\frac{2-\gamma}{2}}(\ln t)^{-1}, \quad &1<\gamma<2,\\
1, \quad &\gamma=2,\\
(\ln t)^{-1}, \quad &\gamma>2.\\
\end{cases}
\end{equation*}
Here $\eta$ is a cut-off function.
\end{theorem}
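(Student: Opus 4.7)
The plan is to follow an inner-outer gluing/modulation scheme adapted to the slow decay of the 1-equivariant harmonic map. First I would make the ansatz
\begin{equation*}
v(r,t) \;=\; \eta(r/\sqrt{t})\, W_{\mu(t)}(r) + \phi(r,t), \qquad W_\mu(r) = \pi - 2\arctan(r/\mu),
\end{equation*}
where $W_\mu$ is the unique 1-equivariant harmonic bubble (a stationary solution of \eqref{eqn-scalar}), $\mu(t)>0$ is a modulation parameter to be determined, $\eta$ is a cutoff matching the bubble to the outer region on the scale $r\sim\sqrt t$, and $\phi$ is a small perturbation. Substituting into \eqref{eqn-scalar} produces an error with three pieces: (i) a time-derivative contribution $-\eta\,\dot\mu\,\partial_\mu W_\mu$; (ii) cutoff errors concentrated near $r\sim\sqrt t$ inherited from $\eta$; and (iii) a slowly decaying tail because $W_\mu(r)\sim 2\mu/r$ at infinity. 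Note that $\|v_r\|_{L^\infty}$ is, to leading order, $|W_\mu'(0)|=2/\mu(t)$, so proving the trichotomy reduces to proving the claimed asymptotics of $\mu(t)$.

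Second, I would derive the modulation ODE. Linearizing around $W_\mu$ gives the operator
\begin{equation*}
L_\mu \phi \;=\; -\phi_{rr}-\phi_r/r + r^{-2}\cos(2W_\mu)\,\phi,
\end{equation*}
whose kernel (from scaling) is spanned by $Z_\mu=\partial_\mu W_\mu = 2r/(\mu^2+r^2)$. Since $Z_\mu\sim 2/r$, the $L^2(r\,dr)$-norm of $Z_\mu$ is logarithmically infinite, so the solvability condition must be truncated at the scale $r\sim\sqrt t$ where the outer region kicks in. This produces a reduced ODE of the schematic form
\begin{equation*}
\dot\mu(t)\,\log\!\bigl(\sqrt t/\mu(t)\bigr) \;\sim\; F(t),
\end{equation*}
with $F(t)$ given by the projection of the outer contribution against $Z_\mu$ over the matching region. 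The outer problem is, to leading order, the 2D linear heat flow in the $k=1$ equivariant class, $\phi_t=\phi_{rr}+\phi_r/r-\phi/r^2$. Propagating the initial tail $v_0(r)\sim r^{1-\gamma}$ by its fundamental solution and evaluating at $r\sim\sqrt t$ yields $F(t)\sim t^{(1-\gamma)/2}$ for $1<\gamma<2$, $F(t)\sim t^{-1/2}$ for $\gamma=2$, and $F(t)\sim t^{-1}$ for $\gamma>2$. Integrating the ODE (the logarithm behaving like $\log t$ in all three regimes) delivers $\mu(t)\sim t^{(2-\gamma)/2}(\log t)^{-1}$, $\mu(t)\sim 1$, and $\mu(t)\sim(\log t)^{-1}$ respectively, matching the statement of the theorem.

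Third, to make this rigorous I would set up an inner-outer gluing. Split $\phi=\chi\,\phi_{\rm in}(y,t)+\phi_{\rm out}(r,t)$ with self-similar variable $y=r/\mu(t)$ and $\chi$ supported in the inner region, and impose the orthogonality condition $\langle\phi_{\rm in},Z\rangle=0$ to eliminate the kernel direction. The inner equation is then a parabolic problem for $\phi_{\rm in}$ controlled by the spectral/linear theory of $L_\mu$ developed in the recent gluing literature (as in \cite{17HMF}), while the outer equation is a 2D heat-type equation with slowly decaying forcing, solved in weighted $L^\infty$ spaces that accommodate the $r^{1-\gamma}$ far-field decay. A contraction-mapping argument on the triple $(\phi_{\rm in},\phi_{\rm out},\mu)$, with $\mu$ determined by the reduced ODE from step two, closes the construction and yields a global-in-time solution with the asserted asymptotics.

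The main obstacle, as flagged in the introduction, is precisely the very slow decay of the 1-equivariant profile. It simultaneously produces a logarithmic divergence that forces a nonstandard, scale-truncated solvability condition; a borderline outer linear theory whose weights must carry logarithmic corrections; and a genuine inner-outer coupling through the bubble tail $2\mu/r$ that is not present with the same delicacy in the higher-equivariant cases of \cite{Gustafson07,Gustafson08,Gustafson10}. Carefully tracking these logarithms and choosing outer weights so that the nonlinear map is a contraction uniformly across the three regimes $\gamma<2$, $\gamma=2$, $\gamma>2$ is the technical crux of the argument.
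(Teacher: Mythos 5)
Your overall framework---a bubble ansatz $\eta(r/\sqrt t)(\pi-2\arctan(r/\mu))$ with modulation and inner-outer gluing, reading the trichotomy off the dynamics of $\mu$---is indeed the paper's approach. But the reduction to the $\mu$-equation, which is the actual heart of the argument, contains two concrete errors and misses the paper's key structural observation.

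First, your forcing $F(t)$ is off by a factor of $\sqrt t$. Checking your own ODE $\dot\mu\log(\sqrt t/\mu)\sim F(t)$ against your claimed answer: for $1<\gamma<2$ you assert $\mu\sim t^{(2-\gamma)/2}(\log t)^{-1}$, hence $\dot\mu\sim t^{-\gamma/2}(\log t)^{-1}$ and $\log(\sqrt t/\mu)\sim\log t$, so the left side is $\sim t^{-\gamma/2}$, \emph{not} $t^{(1-\gamma)/2}$. Solving the ODE with your $F(t)=t^{(1-\gamma)/2}$ would instead give $\mu\sim t^{(3-\gamma)/2}(\log t)^{-1}$. The discrepancy comes from evaluating the evolved tail $\Psi_*$ at $r\sim\sqrt t$; the correct quantity entering the solvability condition is the reduced component $\psi_*=\Psi_*/r$, which behaves like the $\R^4$ heat evolution of $\langle y\rangle^{-\gamma}$ and so contributes $v_\gamma(t)\sim t^{-\gamma/2}$ (for $\gamma<4$), not $t^{(1-\gamma)/2}$.

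Second, your schematic modulation equation is missing the self-similar term $\mu(t)/t$, which comes from the cutoff errors near $r\sim\sqrt t$ and is built into the correction $\Phi_2$ in the paper. Without it the case $\gamma=2$ cannot work: with $\mu\sim 1$ and $F\sim t^{-1}$, the local ODE $\dot\mu\log t\sim t^{-1}$ forces $\dot\mu\sim 1/(t\log t)$ and hence $\mu$ diverging like $\log\log t$. In the paper's balance $\mu/t+\int_{t/2}^{t-\mu^2}\dot\mu(s)/(t-s)\,ds\sim 2C_\gamma v_\gamma(t)$, it is the $\mu/t$ term that matches $2C_\gamma t^{-1}$ at $\gamma=2$, pinning $\mu\to 2C_\gamma$; for $\gamma>2$ the balance is largely internal between $\mu/t$ and the nonlocal integral, with $v_\gamma$ subleading.

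Third, and most importantly, the paper's reduced equation for $\mu$ is genuinely \emph{nonlocal} in time: the logarithm $\dot\mu\log(\sqrt t/\mu)$ in your proposal is merely the crude replacement of the integral $\int_{t/2}^{t-\mu^2}\frac{\dot\mu(s)}{t-s}\,ds$ obtained by pretending $\dot\mu$ is constant on $[t/2,t]$. This is the phenomenon the introduction flags as a consequence of the slow spatial decay of the degree-1 bubble (and the reason the $k=2$ analysis in \cite{Gustafson10} does not apply). While your local ODE does predict the correct leading asymptotics of $\mu$ (once $F$ is corrected and $\mu/t$ added), the paper's Section~\ref{Sec-nonlocal} is devoted to solving the full nonlocal orthogonality equation with a split $\int_{t/2}^{t-t^{1-\nu}}+\int_{t-t^{1-\nu}}^{t-\mu_0^2}$ and a bespoke contraction argument; the H\"older-in-time remainder $\mathcal E_\nu[\mu_1]$ from the nonlocal tail has to be shipped to the inner problem and controlled there. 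A contraction ``on the triple $(\phi_{\rm in},\phi_{\rm out},\mu)$ with $\mu$ determined by the reduced ODE'' will not close, because the error committed by replacing the nonlocal operator by $\dot\mu\log(\cdot)$ is not smaller than the terms you are trying to estimate.
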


Our study of the long-time behavior is in fact motivated by a notable connection between the critical Fujita equation in $\R^4$
\begin{equation}
\label{4D}
\begin{cases}
u_t=\Delta u+u^3 ~&\mbox{ in }~\R^4\times \R_+\\
u(\cdot, 0)=u_0 ~&\mbox{ in }~\R^4\\
\end{cases}
\end{equation}
and the HMF with 1-equivariant symmetry. This connection has already been observed in \cite{Schweyer12JFA,Raphael13,17HMF}, and these two equations share similar structure in certain sense. Roughly speaking, these two equations are both energy critical, and the HMF with 1-equivariant symmetry can be viewed as a four-dimensional heat equation in the remote region.  In \cite{FilaKing12}, Fila and King proposed a diagram and conjectured that the long-time asymptotics of threshold solutions to (\ref{4D}) are determined by the power decay rate of the initial data in a rather precise manner. More precisely, they conjectured that for
\begin{equation}\label{Fujita}
	\begin{cases}
		u_t =\Delta u + |u|^{\frac{4}{N-2}}u, &
		x\in \RR^N, \quad t>0,
		\\
		u(x,0)=u_0(x), &
		x\in \RR^N
	\end{cases}
\end{equation}
with initial data
$|u_0|\sim\langle x\rangle^{-\tilde\gamma},$ the $\| u (\cdot,t) \|_{L^\infty(\R^N)}$-norm of threshold solution obeys

\bigskip

\begin{center}
  \begin{tabular}{ | c | c | c | c | }
    \hline
     & $\frac{N-2}{2} <\tilde\gamma <2$ & $\tilde\gamma=2$ & $\tilde\gamma>2$ \\ \hline
    $N=3$ & $t^{\frac{\tilde\gamma-1}{2}}$ & $t^{\frac{1}{2}} (\ln t)^{-1}$ & $t^{\frac{1}{2}}$
		\\     \hline
		$N=4$ & $t^{-\frac{2-\tilde\gamma}{2} } \ln t$ & $1$ & $\ln t$
		\\     \hline
		$N=5$ & $t^{-\frac{3(2-\tilde\gamma)}{2}}$ & $(\ln t)^{-3}$ & $1$ \\
    \hline
  \end{tabular}
\end{center}
\bigskip
In particular, the trichotomy constructed in Theorem \ref{thm} can be viewed as an analogue of the Fila-King diagram in $\R^4$. Recently, global unbounded solutions for Fujita equation \eqref{Fujita} in $\R^3$ and $\R^4$ have been rigorously constructed in \cite{173D,infi4D}, confirming the existence of upper off-diagonal entries in above diagram (including a sub-case $1<\tilde\gamma<2$ when $N=3$). The global decaying solutions in $\R^5$ will be constructed in a forthcoming work \cite{decay5d}.

\medskip

In the case of the disk with Dirichlet boundary,  the infinite-time bubbling of 1-equivariant HMF and Fujita equation have been studied by Angenent-Hulshof \cite{Angenent-Hulshof} and by Galaktionov-King \cite{King03JDE}, respectively. Their methods and techniques include a careful formal matching of asymptotic expansions and the use of sub- and super-solutions. On the other hand, the global decaying threshold and non-threshold solutions of Fujita equation have been studied extensively, see \cite{Quittner08DCDS,Fila08JMAA,Fila08MA,Gui01JDE,Kavian87AIHP,Kawanago96AIHP,Lee92TAMS,Polacik07Indiana,Polacik03MA,Suzuki1999} as well as a comprehensive book by Quittner and Souplet \cite{Souplet07book} and the references therein. Finally we should also mention some related interesting work on threshold dynamics for energy-critical wave equation by Krieger, Nakanishi and Schlag in \cite{KNS1, KNS2, KNS3}.

\medskip

The method of our construction is different from those used in aforementioned references, and this seems to be the first gluing construction of decaying solutions. In contrast to the local dynamics \eqref{d2dynamics} when $k=2$, the slow spatial decay for the 1-equivariant case in fact triggers a subtle non-local dynamics of the dilation.  The heart of the construction is a non-local dynamics, analogous to (\ref{d2dynamics}), governing the  scaling parameter $\mu(t)$ in a unified way:
\begin{equation}\label{intro-nonlocal}
\underbracket[0.5pt][7pt]{\int_{t/2}^{t-\mu^2(t)}\frac{\dot\mu(s)}{t-s}ds}_{:=I_{{\rm nl}}}+\underbracket[0.5pt][7pt]{\frac{\mu(t)}{t} \vphantom{\int_{t/2}^{t-\mu^2(t)}\frac{\dot\mu(s)}{t-s}ds} }_{:=I_{{\rm ss}}}\sim  \underbracket[0.5pt][7pt]{2 C_{\gamma}
v_{\gamma}(t)  \vphantom{\int_{t/2}^{t-\mu^2(t)}\frac{\dot\mu(s)}{t-s}ds}}_{:=I_{{\rm ic}}} , \quad \forall \gamma>1.
\end{equation}
Here, $I_{{\rm nl}}$ is in fact from a non-local correction dealing with the slow spatial decay.
Such non-local/global feature usually appears in lower dimensional problems and was first observed in \cite{17HMF,173D}. The second term $I_{{\rm ss}}$ comes from a self-similar correction improving the error in the intermediate region, and the last term $I_{{\rm ic}}$ is the contribution from the initial condition $v_0$ whose expression depends only on $\gamma$ (cf. \eqref{def-v_gamma}). The trichotomy in Theorem \ref{thm} is captured by approximating the non-local problem by a leading ODE, but the solvability of the full non-local problem is rather involved.

\medskip

The rest of the paper is devoted to the construction of Theorem \ref{thm}.

\medskip

\noindent \textbf{Notation:} For admissiable functions $g(x), h(x,t)$, denote
\begin{equation*}
\left(T_{n}\circ g \right)(x,t,t_0) := (4\pi t)^{-\frac{n}{2} }\int_{\R^n} e^{-\frac{|x-y|^2}{4t}} g(y) dy,\quad
\left( T_{n}  \bullet g \right) (x,t,t_0) :=
\int_{t_0}^t \int_{\R^n}
e^{ -\frac{|x-y|^2}{4(t-s)}  } h(y,s) dy ds .
\end{equation*}

We write $a\lesssim b$ ($a \gtrsim b$) if there exists a constant $C > 0$ such that $a \le  Cb$ ($a \ge  Cb$) where $C$ is independent of $t$, $t_0$. Set $a \sim b$ if $b \lesssim a \lesssim b$. The Japanese bracket denotes $\langle x\rangle = \sqrt{|x|^2+1}$.

\medskip

\medskip

\section{Approximation and corrections}

\medskip

The first approximation is built on the one parameter family of steady states to the scalar equation \eqref{eqn-HMF} $$
Q_\mu =\pi-2\arctan\left( \frac{r}{\mu} \right), \quad \mu>0.
$$
Then we have
$$
\sin(2Q_\mu)=\frac{4\rho(\rho^2-1)}{(\rho^2+1)^2}, \quad \cos(2Q_\mu)-1=-\frac{8\rho^2}{(\rho^2+1)^2}.
$$
Define the cut-off function $\eta$ as $\eta(r)=1$ for $0\leq r\leq1$ and $\eta(r)=0$ for $r\geq 2$. We take the first approximate solution of the flow \eqref{eqn-HMF} to be
$$
v_*= \eta\left(\frac{r}{\sqrt t}\right)Q_\mu,\quad \mu=\mu(t),
$$
and define the error operator as
$$
E[v]:=-v_t+v_{rr}+\frac1r v_r -\frac{\sin(2v)}{2r^2}.
$$
Let us write
$$\rho:=\frac{r}{\mu},\quad z:=\frac{r}{\sqrt t}.$$
Then we have
\begin{equation}\label{firsterror}
\begin{aligned}
& E[v_*]
=\mu^{-1}\dot\mu\rho \pp_\rho Q_\mu \eta(z)+ \frac1t \eta''(z) Q_\mu +\frac2{\mu\sqrt t}\eta'(z)\pp_{\rho} Q_\mu\\
&~ +\left(\frac{r}{2t\sqrt t}+\frac1{r\sqrt t}\right)\eta' (z) Q_\mu +\eta(z)\frac{\sin(2Q_\mu)}{2r^2}-\frac{\sin(2\eta(z) Q_\mu)}{2r^2}\\
=&~\underbrace{\mu^{-1}\dot\mu \eta(z) \rho \pp_\rho Q_\mu}_{:=\mathcal E_{1}}+\underbrace{\frac2{t\rho}\eta''(z)-\frac4{\mu\sqrt t\rho^2}\eta'(z)+\frac2{\rho}\left(\frac{r}{2t\sqrt t}+\frac1{r\sqrt t}\right)\eta'(z)}_{:=\mathcal E_{21}} \\
&~+ \underbrace{\frac1t \eta''(z) \left(Q_\mu-\frac2{\rho}\right) +\frac2{\mu\sqrt t}\eta'(z)\left(\pp_{\rho} Q_\mu+\frac2{\rho^2}\right)+\left(\frac{r}{2t\sqrt t}+\frac1{r\sqrt t}\right)\eta'(z) \left(Q_\mu-\frac2{\rho}\right)}_{:=\mathcal E_{22}}\\
&~ +\eta(z)\frac{\sin(2Q_\mu)}{2r^2}-\frac{\sin(2\eta(z) Q_\mu)}{2r^2}.
\end{aligned}
\end{equation}
Denote $\mathcal{E}_2:=\mathcal E_{21}+\mathcal E_{22}$. We add two corrections $\Phi_1$ and $\Phi_2$ to transfer the error $\mathcal E_1$, $ \mathcal{E}_2$ of slow spatial decay, where
\begin{equation}\label{eqn-nonlocal}
\pp_t \Phi_1=\pp_{rr} \Phi_1 +\frac1r\pp_r \Phi_1-\frac1{r^2}\Phi_1+\mathcal E_1,
\end{equation}
\begin{equation}\label{eqn-selfsimilar}
\pp_t \Phi_2=\pp_{rr} \Phi_2 +\frac1r\pp_r \Phi_2-\frac1{r^2}\Phi_2+\mathcal E_{2}.
\end{equation}

On the other hand, the contribution from the initial data $v_0$ is also important. Set
\begin{equation*}
	\pp_{t} \Psi_{*}=
	\pp_{rr} \Psi_{*}   + \frac{1}{r} \pp_{r}\Psi_{*}
	-\frac{1}{r^2} \Psi_{*},
	\quad
	\Psi_{*}(r,0) = r\langle r\rangle^{-\gamma}
\end{equation*}
where
\begin{equation*}
	\Psi_{*}(r,t)  =
	r \psi_*(r,t),
	\quad
\psi_*(r,t)=
\left( 4\pi t \right)^{-2}
\int_{\R^{4}}
e^{-\frac{|r \mathbf{e}_1-y|^2}{4 t }} \langle y\rangle^{-\gamma} dy
\end{equation*}
$\mathbf{e}_1=[1,0,0,0]$.
 For $t\ge 1$, by \cite{decay5d}, the leading term from the Cauchy data is given by
\begin{equation*}
	\left( 4\pi t \right)^{-2}
	\int_{\R^{4}}
	e^{-\frac{|y|^2}{4 t }} \langle y\rangle^{-\gamma} dy
	=v_{\gamma}(t) (C_{\gamma} + g_{\gamma}(t))
\end{equation*}
where
\begin{equation}\label{def-v_gamma}
	v_{\gamma}(t)
	=
	\begin{cases}
		t^{-\frac{\gamma}{2}},
		&
		\gamma <4
		\\
		t^{-2} \ln (1+t),
		&
		\gamma =4
		\\
		t^{-2},
		&
		\gamma >4,
	\end{cases}, \quad
	C_{\gamma} =
	\begin{cases}
		(4\pi)^{-2} \int_{\RR^4}
		e^{-\frac{|z|^2}{4 } }
		|z|^{- \gamma } \mathrm{d} z,
		&
		\gamma<4
		\\
		\left(4\pi \right)^{-2} \frac{1}{2} |S^{3}|
		&
		\gamma=4
		\\
		\left(4\pi \right)^{-2} \int_{\RR^4}
		\langle y \rangle^{-\gamma} \mathrm{d}y
		&
		\gamma>4,
	\end{cases}
\end{equation}
\begin{equation*}
	g_{\gamma}(t)
	=
	O\Big(
	\begin{cases}
		t^{-1},   & \gamma <2
		\\
		t^{-1} \langle \ln t \rangle,  &
		\gamma = 2
		\\
		t^{ \frac{\gamma -4 }{2}},  &
		2 < \gamma <4
		\\
		( \ln (1+t)  )^{-1},
		&
		\gamma = 4
		\\
		t^{\frac{4-\gamma}{2}},
		&
		\gamma<6
		\\
		t^{-1}  \langle \ln t \rangle,
		&
		\gamma=6
		\\
		t^{-1}
		&
		\gamma>6
	\end{cases}
	\Big).
\end{equation*}
The remainder term is bounded by
\begin{equation*}
	\begin{aligned}
		&
		\Big|
		\left( 4\pi t \right)^{-2}
		\int_{\R^{4}}
		\left( e^{-\frac{|\mu \rho \mathbf{e}_1-y|^2}{4 t }} - e^{-\frac{|y|^2}{4 t }} \right) \langle y\rangle^{-\gamma} dy  \Big|
		\\
		= \ &
		\Big|
		\left( 4\pi t \right)^{-2}
		\int_{\R^{4}}
		\int_{0}^1 e^{-\frac{|\theta \mu \rho \mathbf{e}_1-y|^2}{4 t }}
		\frac{-(\theta \mu \rho \mathbf{e}_1 -y) \cdot \mu \rho \mathbf{e}_1}{2t}
		\langle y\rangle^{-\gamma}  d\theta dy  \Big|
		\\
		\lesssim \ &
		\mu \rho  t^{-\frac{5}{2}}
		\int_{\R^{4}}
		\int_{0}^1 e^{-\frac{|\theta \mu \rho \mathbf{e}_1-y|^2}{8 t }}
		\langle y\rangle^{-\gamma} d\theta dy
		\lesssim
		\mu \rho  t^{-\frac{1}{2}} v_{\gamma}(t) .
	\end{aligned}
\end{equation*}
Thus we have
\begin{equation}\label{psi*-est}
	\psi_* =  v_{\gamma}(t) (C_{\gamma} + g_{\gamma}(t))
	+ O( \mu \rho  t^{-\frac{1}{2}} v_{\gamma}(t) ) .
\end{equation}

\medskip

As the leading term of $\mu$, $\mu_0$ is written as
\begin{equation}\label{def-mu_0}
	\mu_0(t)  =
	\begin{cases}
		(1-\frac{\gamma}{2})^{-1} (\gamma-1)^{-1} 2C_{\gamma} t^{1-\frac{\gamma}{2}} (\ln t)^{-1},
		&
		1<\gamma<2
		\\
2C_{\gamma} + (\ln t)^{-1} ,
&
\gamma=2
\\
		(\ln t)^{-1} ,
		&
		\gamma>2
	\end{cases}
\end{equation}
and we make the ansatz $\mu(t)\sim \mu_0(t)$, $\dot{\mu}(t)\sim \dot{\mu}_0(t)$ throughout this paper.
The rigorous derivation about the dynamics of $\mu_0$ is given in section \ref{sec-elliptic-improve}.

\medskip

\subsection{Non-local corrections}

Set $\Phi_i=r\varphi_i$, $i=1,2$. Then  for the purpose of finding the solutions of \eqref{eqn-nonlocal} and  \eqref{eqn-selfsimilar}, it suffices to consider
\begin{equation*}
\pp_{t} \varphi_i = \pp_{rr} \varphi_i +
\frac 3{r} \pp_{r} \varphi_i + r^{-1}\mathcal{E}_i .
\end{equation*}
Notice that
\begin{equation*}
r^{-1} \mathcal{E}_1 =  \frac{-2\mu^{-2} \dot{\mu}}{\mu^{-2}r^2+1 } \eta(z) ,
\quad
r^{-1}\mathcal{E}_{21}= 2 \mu t^{-2}
\left( z^{-2} \eta''(z) + 2^{-1}z^{-1}\eta'(z) -z^{-3} \eta'(z) \right),
\quad
r^{-1}\mathcal{E}_{22}
=
O\left( t^{-3} \mu^{3} \1_{\{ \sqrt{t} \le r\le 2  \sqrt{t} \}}\right) .
\end{equation*}
Denote $\varphi=\varphi_1+\varphi_2$. By the same argument for deriving  \cite[Corollary 2.3]{infi4D},
$\varphi_1$ is given by Duhamel's formula
\begin{equation*}
\varphi_{1} =  T_4\bullet (r^{-1}\mathcal E_1)(r,t,\frac{t_0}{2} ) ;
\end{equation*}
the  leading term of $\varphi_2$ is given by the self-similar solution and the rest smaller error is solved by Duhamel's formula.
One making more accurate convolution estimate in the second estimate in p8 \cite{infi4D}, $\varphi$ has the exponential spatial decay  for any fixed time $t$.
The properties of $\varphi$ are described by the following proposition.
\begin{prop}\label{varphi-coro}
	Assume  $\mu_1$ satisfies $|\mu_{1}|\le \frac{\mu}{2}$, $|\dot{\mu}_{1}| \le \frac{|\dot \mu| }{2}$. We have
	\begin{equation*}
		\begin{aligned}
			|\varphi[\mu] |
			\lesssim \ &
			(\mu  t^{-1}
			+ 	g[\mu]  ) \1_{\{ r\le 2t^{\frac 12}  \}}
			+
			\begin{cases}
				|\dot{\mu}| \langle \ln (\mu^{-1} t^{\frac 12} ) \rangle
				&
				\mbox{ \ if \ } r\le \mu
				\\
				|\dot{\mu}| \langle \ln(r^{-1} t^{\frac 12}) \rangle
				&
				\mbox{ \ if \ }  \mu < r\le t^{\frac 12}
				\\
				t |\dot{\mu}| r^{-2} e^{-\frac{r^2}{16 t}}
				&
				\mbox{ \ if \ }   r > t^{\frac 12}
			\end{cases}
			\\
			& +
			O\Big( \mu r^{-2} e^{-\frac{r^2}{16 t}}
			+
|\dot{\mu}| e^{-c_1\frac{r^2}{t}}
			+
			g[\mu] e^{-\frac{r^2}{16 t} }
			\Big) \1_{\{ r > 2t^{\frac 12}  \}}
		\end{aligned}
	\end{equation*}
	where $c_1>0$ is a small constant and
	\begin{equation*}
		g[\mu] = O \Big( 	
		t^{-2} \int_{t_0/2}^{t} ( s^{-1 } \mu^{3}(s)  + 	s |\dot \mu(s)| )  ds
		\Big) .
	\end{equation*}
	
	\begin{equation*}
		\begin{aligned}
			&
			\big|\varphi [\mu+\mu_1] - \varphi [\mu]\big|
			\lesssim
			\left( O ( |\mu_1|  t^{-1} )
			+
			\tilde{g}[\mu,\mu_1] \right) \1_{\{ r  \le 2t^{\frac 12} \}}
			\\
			&
			+
			\sup\limits_{t_1\in[t/2,t]} \Big(\frac{|\mu_1(t_1)|}{ \mu(t) } + \frac{|\dot\mu_{1}(t_1)|}{|\dot\mu(t)|}  \Big)
			\begin{cases}
				|\dot \mu| \langle \ln (\mu^{-1} t^{\frac 12} ) \rangle
				&
				\mbox{ \ if \ } r\le \mu
				\\
				|\dot \mu| \langle\ln(r^{-1} t^{\frac 12}) \rangle
				&
				\mbox{ \ if \ }  \mu < r\le t^{\frac 12}
				\\
				t |\dot \mu| r^{-2} e^{-\frac{r^2}{16 t}}
				&
				\mbox{ \ if \ }   r > t^{\frac 12}
			\end{cases}
			\\
			&
			+
			O \bigg(  \sup\limits_{t_1\in[t/2,t]}|\mu_1(t_1)| r^{-2} e^{-\frac{r^2}{16 t}}
			+
\Big(  \sup\limits_{t_{1}\in [t/2,t] }	|\dot\mu_{1}(t_1)| +  t^{-2}\int_{t_0/2}^t s |\dot\mu_{1}(s)|  ds \Big)
e^{-c_1\frac{r^2}{t}}
			+ \tilde{g}[\mu,\mu_1] e^{-\frac{r^2}{16 t} } \bigg) \1_{\{ r  > 2t^{\frac 12} \}}
		\end{aligned}
	\end{equation*}
	where
	\begin{equation*}
		\begin{aligned}
			&
			\tilde{g}[\mu,\mu_1] =
			O\Big(|\dot\mu  | \ln t
			\sup\limits_{t_{1}\in[t/2,t]} \Big(\frac{|\mu_1(t_1)|}{\mu(t)}
			+ \frac{|\dot\mu_{1}(t_1)|}{|\dot\mu(t)|}\Big)^2\Big)
			\\
			&
			+ 	O \bigg(	  |\dot\mu| \sup\limits_{t_{1}\in[t/2,t]}\Big(\frac{|\mu_1(t_1)|}{ \mu(t) } + \frac{|\dot\mu_{1}(t_1)|}{|\dot\mu(t)|}  \Big)
			+	 t^{-2} \int_{t_0/2}^{t}
			\Big( s^{-1} |\mu_1(s)| \mu^{2}(s) + s |\dot\mu(s)| \Big(\frac{|\mu_1(s)|}{ \mu(s) } + \frac{|\dot\mu_{1}(s)|}{|\dot\mu(s)|}  \Big) \Big)  ds \bigg) .
		\end{aligned}
	\end{equation*}
	More precisely,
	\begin{equation*}
		\begin{aligned}
			\varphi[\mu] = \ & \bigg[ -2^{-1} \Big( \mu  t^{-1}
			+ \int_{t/2}^{t-\mu_{0}^2 } \frac{\dot\mu(s)}{t-s} d s
			\Big)
			+ O\big( \mu t^{-2} r^2  +
			|\dot\mu|
			\min\{\frac{ r}{\mu}  ,\ln t \} \big)
			+ g[\mu]
			\bigg]
			\1_{\{ r\le 2t^{\frac 12}  \}}
			\\
			& +
			O\Big( \mu  r^{-2} e^{-\frac{r^2}{16 t}}
			+
			r^{-6} \int_{t_0/2}^t s^2 |\dot\mu(s)|  ds
			+
			g[\mu] e^{-\frac{r^2}{16 t} }
			\Big)
			\1_{\{ r > 2 t^{\frac 12} \}} ,
		\end{aligned}
	\end{equation*}
	\begin{equation*}
		\begin{aligned}
			&
			\varphi[\mu + \mu_1 ] - 	\varphi[\mu]
			=
			\bigg[
			-2^{-1} \Big( \mu_1  t^{-1}
			+
			\int_{t/2}^{t-\mu_{0}^2 } \frac{\dot\mu_{1} (s)}{t-s} d s
			\Big)
			\\
			&
			+ O\Big( |\mu_1| t^{-2} r^2 +  |\dot\mu| \sup\limits_{t_{1}\in[t/2,t]}\Big(\frac{|\mu_1(t_1)|}{ \mu(t)} + \frac{|\dot\mu_{1}(t_1)|}{|\dot\mu(t)|}  \Big)  \frac{ r}{\mu} \Big)
			+
			\tilde{g}[\mu,\mu_1]
			\bigg]
			\1_{\{ r \le 2 t^{\frac 12} \}}
			\\
			&
			+
			O \Big(   \sup\limits_{t_{1}\in [t/2,t] }	|\mu_1(t_1)|  r^{-2} e^{-\frac{r^2}{16 t}}
			+
			r^{-6}
			\Big(
			t^3 \sup\limits_{t_{1}\in [t/2,t] }	|\dot\mu_{1}(t_1)| +  \int_{t_0/2}^{t/2} s^2 |\dot\mu_{1}(s)|  ds \Big) +   e^{-\frac{r^2}{16 t}} 	\tilde{g}[\mu,\mu_1]
			\Big)
			\1_{\{ r > 2 t^{\frac 12} \}}
			.
		\end{aligned}
	\end{equation*}
	
Using the ansatz $\mu(t)\sim \mu_0(t)$, $\dot{\mu}(t)\sim \dot{\mu}_0(t)$, then
\begin{equation}\label{g-est}
	g[\mu]
	\sim
	\begin{cases}
		t^{-\frac{\gamma}{2}} (\ln t)^{-1} , &
		1<\gamma<2
		\\
		t^{-1} (\ln t)^{-2} ,
		& \gamma=2
		\\
		t^{-1} (\ln t)^{-2} ,
		& \gamma>2
	\end{cases}
	\quad
	\sim
	|\dot{\mu}_0| .
\end{equation}

\begin{equation*}
\begin{aligned}
|\varphi[\mu] |
\lesssim
\begin{cases}
\begin{cases}
t^{-\frac{\gamma}{2}}, & r\le \mu_0
\\
t^{-\frac{\gamma}{2}} (\ln t)^{-1}
\langle \ln(r^{-1} t^{\frac{1}{2}}) \rangle,
&
\mu_0 < r\le t^{\frac{1}{2}}
\\
 t^{3-\frac{\gamma}{2}} (\ln t)^{-1} r^{-6} ,
& r>t^{\frac{1}{2}}
\end{cases}
&
\mbox{ \ if \ } 1<\gamma<2
\\
\begin{cases}
t^{-1} , & r\le t^{\frac{1}{2}}
\\
 t^2 r^{-6} ,
& r > t^{\frac{1}{2}}
\end{cases}
&
\mbox{ \ if \ } \gamma = 2
\\
\begin{cases}
(t \ln t)^{-1}
 , & r\le t^{\frac{1}{2}}
\\
t^2 (\ln t)^{-1} r^{-6} ,
& r > t^{\frac{1}{2}}
\end{cases}
&
\mbox{ \ if \ } \gamma > 2 .
\end{cases}
\end{aligned}
\end{equation*}
\end{prop}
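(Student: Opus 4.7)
The plan is to solve both equations \eqref{eqn-nonlocal} and \eqref{eqn-selfsimilar} via Duhamel's formula applied to the four-dimensional radial heat operator $\partial_t - \partial_{rr} - (3/r)\partial_r$, which explains the appearance of $T_4$: the substitution $\Phi_i = r\varphi_i$ reduces the original operator with Hardy potential $-r^{-2}$ to the four-dimensional Laplacian acting on $\varphi_i$. One then writes $\varphi_1 = T_4\bullet(r^{-1}\mathcal{E}_1)(r,t,t_0/2)$; for $\varphi_2$ the source is further split into the self-similar piece $r^{-1}\mathcal{E}_{21}$, whose leading contribution is the explicit self-similar profile treated as in \cite{infi4D}, and the smaller remainder $r^{-1}\mathcal{E}_{22}$ which is absorbed into $g[\mu]$.

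The heart of the argument is the extraction of the leading non-local expression from $\varphi_1$. The source $r^{-1}\mathcal{E}_1 = -2\mu^{-2}\dot\mu(s)(\mu^{-2}r^2+1)^{-1}\eta(z)$ behaves, against the four-dimensional heat kernel at scales $\sqrt{t-s} \gg \mu(s)$, like a point mass of strength proportional to $\dot\mu(s)$ concentrated at the origin. Splitting the time integral at $t/2$ and at $t - \mu_0^2(t)$, the contribution from $s \in [t/2, t-\mu_0^2]$ yields the non-local leading term $-\tfrac12\int_{t/2}^{t-\mu_0^2}\frac{\dot\mu(s)}{t-s}ds$, while an integration by parts on $s \in [t_0/2, t/2]$ gives the boundary contribution $-\tfrac12\mu t^{-1}$. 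The contribution from $s \in [t-\mu_0^2, t]$ generates the $|\dot\mu|\langle\ln(\mu^{-1}\sqrt{t})\rangle$ piece in the inner region, while the quadratic Taylor correction in $r/\sqrt{t-s}$ together with the self-similar remainder produce the $\mu t^{-2}r^2$ term and $g[\mu]$.

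Once the leading expression is isolated, the pointwise bounds in each region follow by direct estimation of the Duhamel integral. For $r \le \mu$ the kernel $(t-s)^{-2}\exp(-|x-y|^2/(4(t-s)))$ is essentially constant in $y$ on the support of the source, so one integrates $|\dot\mu(s)|/(t-s)$ up to $s = t-\mu_0^2$, yielding the logarithmic factor $\langle\ln(\mu^{-1}\sqrt{t})\rangle$; in the intermediate range $\mu < r \le \sqrt{t}$ the same integration is cut off earlier by the spatial Gaussian, giving $\langle\ln(r^{-1}\sqrt{t})\rangle$; in the exterior $r > \sqrt{t}$ the source is supported in $|y| \le 2\sqrt{s} \le 2\sqrt{t} \lesssim r/2$, so $|x-y| \ge |x|/2$ on the support and the Gaussian furnishes $\exp(-cr^2/t)$, with the improved polynomial prefactors $\mu r^{-2}$ and $r^{-6}\int s^2|\dot\mu(s)|ds$ obtained by retaining the algebraic part $|x-y|^{-4}$ of the four-dimensional heat kernel rather than bounding it by a time power. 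The Lipschitz-type difference estimate for $\varphi[\mu+\mu_1]-\varphi[\mu]$ is then obtained by subtracting the two Duhamel formulas, expanding $(\mu^{-2}r^2+1)^{-1}$ to first order in $\mu_1$, and repeating the same region-by-region convolution bounds; the suprema over $t_1 \in [t/2, t]$ arise because only the recent history contributes to leading order to the inner integration window.

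Finally, the explicit sizes \eqref{g-est} and the table following it are obtained by substituting $\mu = \mu_0(t)$ from \eqref{def-mu_0} into the general bounds and computing each of the three regimes $1<\gamma<2$, $\gamma=2$, $\gamma>2$ separately. The main obstacle I anticipate is the sharp outer estimate: the improvement over \cite{infi4D} needed to obtain the stated prefactors (rather than merely $e^{-cr^2/t}$ times a crude constant) hinges on not wasting the exponential decay in the convolution, and this forces one to keep the algebraic factor $|x-y|^{-4}$ in the kernel and to track the interaction between the spatial support of the source and the Gaussian tail carefully. The extraction of the non-local boundary at $s = t - \mu_0^2$ is the other delicate point, because the natural inner cut-off $\mu^2(s)$ depends on $s$ while the statement uses the $t$-dependent $\mu_0^2(t)$, and the resulting discrepancy must be shown to fit inside $g[\mu]$ uniformly in $\gamma>1$.
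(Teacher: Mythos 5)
Your proposal follows essentially the same route the paper indicates (which is really only a sketch pointing to \cite[Corollary~2.3]{infi4D}): reduce via $\Phi_i=r\varphi_i$ to a four-dimensional radial heat equation, write $\varphi_1=T_4\bullet(r^{-1}\mathcal E_1)$, treat $\varphi_2$ by a self-similar profile for $r^{-1}\mathcal E_{21}$ plus Duhamel for $r^{-1}\mathcal E_{22}$, extract the non-local leading term, and then estimate region by region; the Lipschitz and H\"older statements follow by differencing the Duhamel representations. You also correctly flag the two genuinely delicate points (sharp exterior prefactors, and the mismatch between the $s$-dependent inner scale $\mu^2(s)$ and the fixed cutoff $\mu_0^2(t)$).

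One substantive misattribution should be corrected. You obtain the $-\tfrac12\mu t^{-1}$ piece from an integration by parts over $s\in[t_0/2,t/2]$ inside the $\varphi_1$ Duhamel integral, but this is not where it comes from: as the introduction explains (the term $I_{\rm ss}$ in \eqref{intro-nonlocal}), $\mu t^{-1}$ is produced by the self-similar correction $\Phi_2$ (the self-similar ansatz $\varphi_2\sim \mu t^{-1}F(z)$ for the source $r^{-1}\mathcal E_{21}\sim\mu t^{-2}G(z)$, evaluated at $z=0$). The $[t_0/2,t/2]$ slice of $\varphi_1$ is indeed $O(\mu t^{-1})$ in size, but it has no clean sign or coefficient and is precisely what $g[\mu]$ is designed to absorb --- note $g[\mu]$ contains $t^{-2}\int_{t_0/2}^t s|\dot\mu(s)|\,ds\sim\mu t^{-1}$, so your piece lives there, not in the explicit $\mu t^{-1}$. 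Two further heuristic imprecisions are worth tidying: the 4D heat kernel has no algebraic factor $|x-y|^{-4}$; the polynomial prefactors in the outer region come from the elementary bound $(t-s)^{-2}e^{-a/(t-s)}\lesssim a^{-k}(t-s)^{k-2}e^{-a/(2(t-s))}$, trading part of the Gaussian for a polynomial factor $r^{-2k}$. And the source $r^{-1}\mathcal E_1\sim -2\dot\mu/r^2$ is not a 4D point mass; its scaling (degree $-2$) makes it behave like a two-codimensional mass, which is exactly why the heat convolution gives $\dot\mu(s)/(t-s)$ rather than $\dot\mu(s)(t-s)^{-2}$. These last points are expository rather than structural, but the $\mu t^{-1}$ attribution needs fixing before the constants close.
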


\medskip

\section{Further elliptic improvement and the leading dynamics of the $\mu(t)$}\label{sec-elliptic-improve}

\medskip

In order to improve the time decay of the error, we will introduce $\Phi_e$ by solving the linearized elliptic equation. Let us first   denote
\begin{equation*}
v_1(r,t) :=\eta(z) Q_\mu+\Phi_1+\Phi_2+\Psi_{*} +\eta(4z)\Phi_e
\end{equation*}
where $\eta(4z)$ is used to restrict the influence of $\Phi_e $ in the self-similar region.
Then we compute
\begin{equation}
	\begin{aligned}
		E[v_1]
		= \ & -\pp_t \left(\eta(4z) \Phi_e \right)+\pp_{rr} \left(\eta(4z) \Phi_e \right)+\frac1r\pp_r\left(\eta(4z) \Phi_e \right) + \frac{1}{r^2}( \Phi_1+\Phi_2+\Psi_{*})
\\
&
		-\frac{\sin[2(\eta(z) Q_\mu+\Phi_1+\Phi_2 +\Psi_{*}+ \eta(4z) \Phi_e )]}{2r^2}+\eta(z) \frac{\sin(2Q_\mu)}{2r^2}
\\
= \ & -\pp_t \left(\eta(4z) \Phi_e \right)+\pp_{rr} \left(\eta(4z) \Phi_e \right)+\frac1r\pp_r\left(\eta(4z) \Phi_e \right)
-
\eta(4z) \frac{\cos(2Q_\mu)}{r^2}
\Phi_e
\\
&
-
\eta(z) \frac{\cos(2Q_\mu) -1}{r^2}
\left( \Phi_1+\Phi_2+\Psi_{*} \right)
+
E_e
	\end{aligned}
\end{equation}
where
\begin{equation}\label{def-E_e}
\begin{aligned}
E_e
:=&
-
\eta(z) \frac{1}{2r^2}
\Big[ \sin[2( Q_\mu+\Phi_1+\Phi_2+\Psi_{*} + \eta(4z) \Phi_e )]
-
\sin(2Q_\mu)
-
\cos(2Q_\mu) 2\left( \Phi_1+\Phi_2+\Psi_{*} + \eta(4z) \Phi_e \right)
\Big]
\\
&
+
\frac{1}{2r^2}
\Big[
- \sin[2(\eta(z) Q_\mu+\Phi_1+\Phi_2+\Psi_{*} + \eta(4z) \Phi_e )]
+
\eta(z) \sin[2( Q_\mu+\Phi_1+\Phi_2+\Psi_{*} + \eta(4z) \Phi_e )]
\\
& + 2\left( 1-\eta(z)  \right) \left( \Phi_1+\Phi_2+\Psi_{*} \right)
\Big]  .
	\end{aligned}
\end{equation}

Roughly speaking, we will choose $\Phi_e(\rho,t)$ which solves
\begin{equation*}
\pp_{rr} \Phi_e+\frac1r\pp_r\Phi_e-\frac{\cos(2Q_\mu)}{r^2}\Phi_e \approx
\eta(z) \frac{\cos(2Q_\mu)-1}{r^2}( \Phi_1+\Phi_2+ \Psi_{*}),
\end{equation*}
namely
\begin{equation*}
	\pp_{\rho\rho} \Phi_e+\frac1{\rho}\pp_\rho\Phi_e-  \frac{\rho^4-6\rho^2+1}{\rho^2(\rho^2+1)^2}\Phi_e \approx
	\eta(\frac{\mu\rho}{\sqrt{t}})
	\mu  \frac{-8 \rho }{(\rho^2+1)^2}
	\left( \varphi[\mu](\mu\rho,t)
	+
	\psi_*(\mu\rho,t)
	\right).
\end{equation*}
The linearly independent kernels $\mathcal{Z}, \tilde{\mathcal{Z}}$ of
 the homogeneous part satisfying the Wronskian $W[\mathcal{Z}, \tilde{\mathcal{Z} } ] = \rho^{-1}$  are given as follows:
\begin{equation*}
	\mathcal{Z} (\rho)
	= \frac{\rho}{\rho^2 +1} ,
	\quad
	\tilde{\mathcal{Z}}(\rho)
	=
	\frac{\rho^4 +4 \rho^2\ln(\rho) -1}{2 \rho(\rho^2+1)} .
\end{equation*}

Let us write the orthogonality
\begin{equation*}
\begin{aligned}
	\mathcal{M}[\mu]
= \ &
\int_0^\infty
\eta(\frac{\mu\rho}{\sqrt{t}})
\frac{8 \rho }{(\rho^2+1)^2}
\left( \varphi[\mu](\mu\rho,t)
+
\psi_*(\mu\rho,t)
\right) \mathcal{Z} (\rho) \rho d\rho
\\
= \ &
\int_0^\infty \eta(\frac{\mu\rho}{\sqrt{t}}) \frac{8 \rho^3 }{(\rho^2+1)^3}
\left( \varphi[\mu](\mu\rho,t)
+
\psi_*(\mu\rho,t)
\right)  d\rho .
\end{aligned}
\end{equation*}
By \eqref{psi*-est} and Proposition \ref{varphi-coro}, we have
\begin{equation*}
\begin{aligned}
\mathcal{M}[\mu] =  &
\int_0^\infty \eta(\frac{\mu\rho}{\sqrt{t}}) \frac{8 \rho^3 }{(\rho^2+1)^3}
\bigg[ -2^{-1} \Big( \mu  t^{-1}
+ \int_{t/2}^{t-\mu_{0}^2 } \frac{\dot\mu(s)}{t-s} d s
\Big)
\\
&
\qquad+ O\big( \mu^3 t^{-2} \rho^2  +
|\dot\mu|
\min\{\rho ,\ln t \} \big)
+ g[\mu]
\bigg]     d\rho
\\
& +
\int_0^\infty
\eta(\frac{\mu\rho}{\sqrt{t}})
 \frac{8 \rho^3 }{(\rho^2+1)^3}
\left( v_{\gamma}(t) (C_{\gamma} + g_{\gamma}(t))
+ O( \mu \rho  t^{-\frac{1}{2}} v_{\gamma}(t) ) \right) d\rho
\\
= \ &    \Big( \mu  t^{-1}
+ \int_{t/2}^{t-\mu_{0}^2 } \frac{\dot\mu(s)}{t-s} d s
\Big)
\left( -1 + O((t^{\frac{1}{2}} \mu^{-1})^{-2} )\right)
+ O\big( \mu^3 t^{-2} \ln(t^{\frac{1}{2}} \mu^{-1} )  +
|\dot\mu|   \big)
+ g[\mu]
\\
& +
v_{\gamma}(t) (C_{\gamma} + g_{\gamma}(t))
\left( 2 + O((t^{\frac{1}{2}} \mu^{-1})^{-2} )\right)
+
O( \mu  t^{-\frac{1}{2}} v_{\gamma}(t) ) .
\end{aligned}
\end{equation*}
Singling out the leading terms, we then have
\begin{equation}\label{eqn-323232}
 \mu  t^{-1}
+ \int_{t/2}^{t-\mu_{0}^2 } \frac{\dot\mu(s)}{t-s} d s  \approx
2 C_{\gamma}
v_{\gamma}(t)  .
\end{equation}
 Based on this, we now derive the leading term $\mu_0$ of the scaling parameter $\mu$. For $\mu_0(t)$ with the form $ \mu_0(t)  =c_1 t^{1-p_0} (\ln t)^{-1}$ with $p_0<1$, we have $$\dot{\mu}_0(t) = c_1 (1-p_0) t^{-p_0} (\ln t)^{-1}
\left[ 1- (1-p_0)^{-1} (\ln t)^{-1}\right].$$ For $t_1\le \frac{t}{2}$, one has
\begin{equation*}
	\begin{aligned}
		&
		\int_{t_1}^{t-\mu_0^2(t)} \frac{\dot {\mu}_0(s) }{t-s}   ds
		=
		\int_{\frac{t_1}{t}}^{1-\frac{\mu_0^2(t)}{t}} \frac{\dot {\mu}_0(tz) }{1-z}   dz
		\\
		= \ &
		c_1 (1-p_0) t^{-p_0}
		\int_{\frac{t_1}{t}}^{1-\frac{\mu_0^2(t)}{t}} (1-z)^{-1}  z^{-p_0} (\ln (tz))^{-1}
		\left[ 1- (1-p_0)^{-1} (\ln (tz) )^{-1}\right]  dz
		\\
		= \ &
		c_1 (1-p_0) (2p_0-1) t^{-p_0}  +
		O( t^{-p_0} (\ln t)^{-1} \ln( \ln t ) ),
	\end{aligned}
\end{equation*}
where we have used the following estimates in the last step
\begin{equation*}
	\begin{aligned}
		&
		\int_{\frac{t_1}{t}}^{1-\frac{\mu_0^2(t)}{t}} (1-z)^{-1}  z^{-p_0} (\ln (tz))^{-1} dz
		\\
		= \ &
		(\ln t)^{-1}
		\int_{\frac{t_1}{t}}^{1-\frac{\mu_0^2(t)}{t}} (1-z)^{-1}  z^{-p_0}  dz
		+
		\int_{\frac{t_1}{t}}^{1-\frac{\mu_0^2(t)}{t}} (1-z)^{-1}  z^{-p_0} \left( (\ln (tz))^{-1} - (\ln t)^{-1} \right) dz
		\\
		= \ &
		(\ln t)^{-1}
		\int_{\frac{t_1}{t}}^{1-\frac{\mu_0^2(t)}{t}} (1-z)^{-1}  dz
		+
		(\ln t)^{-1}
		\int_{\frac{t_1}{t}}^{1-\frac{\mu_0^2(t)}{t}} (1-z)^{-1} \left( z^{-p_0} -1 \right) dz
		\\
		&
		+
		\int_{\frac{t_1}{t}}^{1-\frac{\mu_0^2(t)}{t}} (1-z)^{-1}  z^{-p_0}
		\frac{-\ln z}{(\ln t+\ln z)\ln t} dz
		\\
		= \ &
		(\ln t)^{-1} \left( -\ln(t^{-1} \mu_0^2(t)) +\ln(1-\frac{t_1}{t}) \right)
		+
		O((\ln t)^{-1})
		\\
		= \ &
		(\ln t)^{-1} \left( -\ln( c_1^2 t^{1-2p_0} (\ln t)^{-2} ) +\ln(1-\frac{t_1}{t}) \right)
		+
		O((\ln t)^{-1})
		\\
		= \ &
		(\ln t)^{-1} \left( -\ln( c_1^2)
		-(1-2p_0)\ln t +2\ln( \ln t )
		+\ln(1-t_1 t^{-1} ) \right)
		+
		O((\ln t)^{-1})
		\\
		= \ &
		2p_0-1 + O((\ln t)^{-1} \ln( \ln t ) ) ;
	\end{aligned}
\end{equation*}
and
\begin{equation*}
	\int_{\frac{t_1}{t}}^{1-\frac{\mu_0^2(t)}{t}} (1-z)^{-1}  z^{-p_0} (\ln (tz))^{-2} dz = O( (\ln t)^{-1} )
\end{equation*}
since
\begin{equation*}
	\begin{aligned}
		&
		\int_{\frac{1}{2}}^{1-\frac{\mu_0^2(t)}{t}} (1-z)^{-1}  z^{-p_0} (\ln (tz))^{-2} dz
		= O( (\ln t)^{-2} ) \int_{\frac{1}{2}}^{1-\frac{\mu_0^2(t)}{t}} (1-z)^{-1} dz
		= O( (\ln t)^{-1} ) ,
		\\
		&
		\int_{\frac{t_1}{t}}^{\frac{1}{2}} (1-z)^{-1}  z^{-p_0} (\ln (tz))^{-2} dz
		\sim
		\int_{\frac{t_1}{t}}^{\frac{1}{2}}  z^{-p_0} (\ln (tz))^{-2} dz
		=
		t^{p_0-1} \int_{t_1}^{\frac{t}{2}} a^{-p_0} (\ln a)^{-2} da
		= O( (\ln t)^{-2} ) .
	\end{aligned}
\end{equation*}

\noindent $\bullet$ For $1<\gamma<2$, in order to balance out
\begin{equation*}
	c_1 t^{-p_0} (\ln t)^{-1}
	+
	c_1 (1-p_0) (2p_0-1) t^{-p_0}  +
	O\left( t^{-p_0} (\ln t)^{-1} \ln( \ln t ) \right)
	\approx
	2 C_{\gamma}
	v_{\gamma}(t)  ,
\end{equation*}
we take
\begin{equation*}
	p_0 = \frac{\gamma}{2},
	\quad
	c_1 = (1-\frac{\gamma}{2})^{-1} (\gamma-1)^{-1} 2C_{\gamma} .
\end{equation*}
This then implies
\begin{equation*}
\mu_0(t)  = (1-\frac{\gamma}{2})^{-1} (\gamma-1)^{-1} 2C_{\gamma} t^{1-\frac{\gamma}{2}} (\ln t)^{-1},
\end{equation*}
and
\begin{equation*}
	 -\Big( \mu_0  t^{-1}
+ \int_{t/2}^{t-\mu_{0}^2 } \frac{\dot\mu_0(s)}{t-s} d s
\Big)
 +
2v_{\gamma}(t) (C_{\gamma} + g_{\gamma}(t))
=
O\left( t^{-\frac{\gamma}{2}} (\ln t)^{-1} \ln \ln t \right)
=
O\left(  \ln \ln t |\dot{\mu}_0| \right)
 .
\end{equation*}

\noindent $\bullet$ For $\gamma=2$, in order to balance out
\begin{equation*}
	\mu  t^{-1}
	+ \int_{t/2}^{t-\mu_{0}^2 } \frac{\dot\mu(s)}{t-s} d s  \approx
	2 v_{\gamma}(t) (C_{\gamma} + g_{\gamma}(t)) ,
\end{equation*}
we choose
\begin{equation*}
	\mu_0 = 2C_{\gamma} + (\ln t)^{-1} .
\end{equation*}
Then
\begin{equation*}
	-\Big( \mu_0  t^{-1}
	+ \int_{t/2}^{t-\mu_{0}^2 } \frac{\dot\mu_0(s)}{t-s} d s
	\Big)
	+
	2v_{\gamma}(t) (C_{\gamma} + g_{\gamma}(t))
	=
	O\left( t^{-1} (\ln t)^{-2} \ln \ln t  \right)
=
O\left( \ln \ln t |\dot{\mu}_0| \right)
	 .
\end{equation*}

\noindent $\bullet$ For $\gamma>2$, by the same argument in \cite[section 2.3]{infi4D}, a good approximation is
\begin{equation*}
\mu_0=(\ln t)^{-1}
\end{equation*}
and
\begin{equation*}
	-\Big( \mu_0  t^{-1}
	+ \int_{t/2}^{t-\mu_{0}^2 } \frac{\dot\mu_0(s)}{t-s} d s
	\Big)
	+
	2v_{\gamma}(t) (C_{\gamma} + g_{\gamma}(t))
	=
	O\left( t^{-1} (\ln t)^{-2} \ln \ln t  \right)
	=
	O\left( \ln \ln t |\dot{\mu}_0| \right)
	 .
\end{equation*}

In conclusion, the non-local problem \eqref{eqn-323232} has a good approximation of the form
\begin{equation}\label{def-mu_0}
	\mu_0(t)  =
	\begin{cases}
		(1-\frac{\gamma}{2})^{-1} (\gamma-1)^{-1} 2C_{\gamma} t^{1-\frac{\gamma}{2}} (\ln t)^{-1},
		&
		1<\gamma<2
		\\
2C_{\gamma} + (\ln t)^{-1} ,
&
\gamma=2
\\
		(\ln t)^{-1} ,
		&
		\gamma>2,
	\end{cases}
\end{equation}
where the constant $C_{\gamma}$ is defined in \eqref{def-v_gamma}.

By the same argument in \cite[section 2.3]{infi4D}, we are able to perform several iterations to find $\bar{\mu}_0$ satisfying
$\bar{\mu}_0\sim \mu_0$ and $\dot{\bar{\mu}}_0 \sim \dot{\mu}_0$ such that
\begin{equation*}
\mathcal{M}[\bar{\mu}_0] = O(t^{-2}) .
\end{equation*}

Combining Proposition \ref{varphi-coro}, for $\mu=\bar{\mu}_0+\mu_1$, with $|\mu_1|\le \frac{\bar{\mu}_0}{2}$, $|\dot{\mu}_1|\le \frac{|\dot{\bar{\mu}}_0|}{2}$, we have
\begin{equation}\label{varphi+psi}
	\begin{aligned}
		&	\varphi[\mu] + \psi_* =   \bigg[ -2^{-1} \Big( \mu  t^{-1}
		+ \int_{t/2}^{t-\mu_{0}^2 } \frac{\dot\mu(s)}{t-s} d s
		\Big)
		+ O\big( \mu t^{-2} r^2  +
		|\dot\mu|
		\min\{\frac{ r}{\mu}  ,\ln t \} \big)
		+ g[\mu]
		\bigg]
		\1_{\{ r\le 2t^{\frac 12}  \}}
		\\
		& +
		O\Big( \mu  r^{-2} e^{-\frac{r^2}{16 t}}
		+
		r^{-6} \int_{t_0/2}^t s^2 |\dot\mu(s)|  ds
		+
		g[\mu] e^{-\frac{r^2}{16 t} }
		\Big)
		\1_{\{ r > 2 t^{\frac 12} \}}
		+
		v_{\gamma}(t) (C_{\gamma} + g_{\gamma}(t))
		+ O( \mu \rho  t^{-\frac{1}{2}} v_{\gamma}(t) )
		\\
		= \ &
		\bigg[ -2^{-1} \Big( \mu_1  t^{-1}
		+ \int_{t/2}^{t-\mu_{0}^2 } \frac{\dot\mu_1(s)}{t-s} d s
		\Big)
		+ O( \mu_0  t^{-2} r^2 )
		+
		|\dot{\mu}_0|
		\min\{ \langle \rho \rangle,\ln t \} \big)
		\bigg]
		\1_{\{ r\le 2t^{\frac 12}  \}}
		\\
		& +
		O\Big(  \mu_0  r^{-2} e^{-\frac{r^2}{16 t}}
		+
		|\dot{\mu}_0|  t^3 r^{-6}
		\Big)
		\1_{\{ r > 2 t^{\frac 12} \}}
		+ O( \mu_0 \rho  t^{-\frac{1}{2}} v_{\gamma}(t) )
		+
		O\left( \ln \ln t |\dot{\mu}_0| \right),
	\end{aligned}
\end{equation}
where we have used \eqref{g-est}.

Since $\bar{\mu}_0$ is determined, we are now able to describe $\Phi_e$ rigorously for the computations of new error later.
Set $\bar{\rho}=\frac{r}{\bar{\mu}_0}$ and consider $\Phi_e=\Phi_e(\bar{\rho},t)$ solving
\begin{equation*}
	\pp_{\bar{\rho}\bar{\rho} } \Phi_e+\frac1{\bar{\rho}}\pp_{\bar{\rho}}\Phi_e-  \frac{\bar{\rho}^4-6\bar{\rho}^2+1}{\bar{\rho}^2(\bar{\rho}^2+1)^2}\Phi_e
	=
	\tilde{H}(\bar{\rho},t)
\end{equation*}
where
\begin{equation*}
	\tilde{H}(\bar{\rho},t)=
	\bar{\mu}_0
	\eta(\frac{\bar{\mu}_0 \bar{\rho}}{\sqrt{t}})
	\frac{-8 \bar\rho }{(\bar{\rho}^2+1)^2}
	\left( \varphi[\bar{\mu}_0](\bar{\mu}_0\bar{\rho},t)
	+
	\psi_*(\bar{\mu}_0 \bar{\rho},t)
	\right)
	+
	\bar{\mu}_0 \mathcal{M}[\bar{\mu}_0]
	\frac{\eta(\bar{\rho}) \mathcal{Z}(\bar{\rho})  }{\int_{0}^3 \eta(x) \mathcal{Z}^2(x) xdx } .
\end{equation*}
$\Phi_e$ is taken as
\begin{equation*}
	\Phi_e(\bar{\rho},t) =
	\tilde{\mathcal{Z}}(\bar{\rho}) \int_{0}^{\bar{\rho}} \tilde{H}(x,t) \mathcal{Z}(x) x dx
	-
	\mathcal{Z}(\bar{\rho}) \int_{0}^{\bar{\rho}} \tilde{H}(x,t) \tilde{\mathcal{Z}}(x) x dx
	.
\end{equation*}

By the definition of $\mathcal{M}[\bar{\mu}_0]$, one clearly has
\begin{equation}\label{tilde-H-orth}
	\int_{0}^\infty \tilde{H}(x, t) \mathcal{Z}(x) xdx =0.
\end{equation}
By Proposition \ref{varphi-coro},  for $1<\gamma<2$,
\begin{equation*}
	\begin{aligned}
		\tilde{H}(\bar{\rho},t)=&~
		\bar{\mu}_0
		\eta(\frac{\bar{\mu}_0 \bar{\rho}}{\sqrt{t}})
		\frac{-8 \bar\rho }{(\bar{\rho}^2+1)^2}
		\Bigg\{
		\bigg[ -2^{-1} \Big( \bar{\mu}_0  t^{-1}
		+ \int_{t/2}^{t-\mu_{0}^2 } \frac{\dot{\bar\mu}_0(s)}{t-s} d s
		\Big)\\
		&
		\qquad\qquad+ O\big( \bar{\mu}_0 t^{-2} (\bar{\mu}_0 \bar{\rho})^2  +
		|\dot{\bar{\mu}}_0|
		\min\{ \bar{\rho}  ,\ln t \} \big)
		+ g[\bar{\mu}_0]
		\bigg]
		\\
		& \qquad\qquad+
		v_{\gamma}(t) (C_{\gamma} + g_{\gamma}(t))
		+ O( \bar{\mu}_0 \bar{\rho}  t^{-\frac{1}{2}} v_{\gamma}(t) )
		\Bigg\}
		+
		\bar{\mu}_0 O(t^{-2})
		\frac{\eta(\bar{\rho}) \mathcal{Z}(\bar{\rho}) \bar{\rho} }{\int_{0}^3 \eta(x) \mathcal{Z}^2(x) xdx }
		\\
		= \ &
		\bar{\mu}_0
		\eta(\frac{\bar{\mu}_0 \bar{\rho}}{\sqrt{t}})
		\frac{-8 \bar\rho }{(\bar{\rho}^2+1)^2}
		\left(
		O( t^{-\frac{\gamma}{2}} (\ln t)^{-1} \ln( \ln t ) )
		+ O\big(
		t^{-\frac{\gamma}{2}} (\ln t)^{-1}
		\bar{\rho} \big)
		\right)
		+
		\bar{\mu}_0 O(t^{-2})
		\frac{\eta(\bar{\rho}) \mathcal{Z}(\bar{\rho})  }{\int_{0}^3 \eta(x) \mathcal{Z}^2(x) xdx }   .
	\end{aligned}
\end{equation*}
Thus
\begin{equation*}
| \tilde{H} |
\lesssim
\mu_0
\eta(\frac{\bar{\mu}_0 \bar{\rho}}{\sqrt{t}})
\bar\rho \langle \bar\rho \rangle^{-3}
 t^{-\frac{\gamma}{2}} (\ln t)^{-1} \ln( \ln t )
 .
\end{equation*}
Similarly,
for $\gamma=2$, we have
\begin{equation*}
\begin{aligned}
		|\tilde{H} |
		= \ &
		\Big|
		\bar{\mu}_0
		\eta(\frac{\bar{\mu}_0 \bar{\rho}}{\sqrt{t}})
		\frac{-8 \bar\rho }{(\bar{\rho}^2+1)^2}
		\left( 	O\left( t^{-1} (\ln t)^{-2} \ln( \ln t ) \right)
		+
		O\left( t^{-1}(\ln t)^{-2} \bar{\rho}\right)
		\right)
		+
		\bar{\mu}_0 O(t^{-2})
		\frac{\eta(\bar{\rho}) \mathcal{Z}(\bar{\rho})  }{\int_{0}^3 \eta(x) \mathcal{Z}^2(x) xdx }   \Big|
\\
\lesssim \ &
\mu_0
\eta(\frac{\bar{\mu}_0 \bar{\rho}}{\sqrt{t}})
\bar\rho
\langle \bar\rho \rangle^{-3}
 t^{-1} (\ln t)^{-2} \ln( \ln t ) .
\end{aligned}
\end{equation*}
For $\gamma>2$, we have
\begin{equation*}
	\begin{aligned}
		|\tilde{H} |
		= \ &
		\Big|
		\bar{\mu}_0
		\eta(\frac{\bar{\mu}_0 \bar{\rho}}{\sqrt{t}})
		\frac{-8 \bar\rho }{(\bar{\rho}^2+1)^2}
		\left(
		O\left( t^{-1} (\ln t)^{-2} \ln( \ln t ) \right)
		+
		O\left(t^{-1}(\ln t)^{-2} \bar{\rho} \right)
		\right)
		+
		\bar{\mu}_0 O(t^{-2})
		\frac{\eta(\bar{\rho}) \mathcal{Z}(\bar{\rho})  }{\int_{0}^3 \eta(x) \mathcal{Z}^2(x) xdx }   \Big|
\\
\lesssim \ &
\mu_0
\eta(\frac{\bar{\mu}_0 \bar{\rho}}{\sqrt{t}})
\bar\rho
\langle \bar\rho \rangle^{-3}
 t^{-1} (\ln t)^{-2} \ln( \ln t ) .
	\end{aligned}
\end{equation*}
Using the rough estimates in Proposition \ref{varphi-coro}, another upper bound of $\tilde{H}$ is given by
\begin{equation*}
	|\tilde{H}| \lesssim
\mu_0
\eta(\frac{\bar{\mu}_0 \bar{\rho}}{\sqrt{t}})
	\begin{cases}
	\bar\rho \langle \bar\rho  \rangle^{-4}
	t^{-\frac{\gamma}{2}} ,
&
1<\gamma<2
\\
\bar\rho \langle \bar\rho  \rangle^{-4}
t^{-1},
& \gamma=2
\\
\bar\rho \langle \bar\rho  \rangle^{-4}
t^{-1} (\ln t)^{-1},
&
\gamma>2 .
	\end{cases}
\end{equation*}
Combining the above two upper bounds, $\tilde{H}$ is then bounded by
\begin{equation*}
	|\tilde{H}| \lesssim
	\mu_0
	\eta(\frac{\bar{\mu}_0 \bar{\rho}}{\sqrt{t}})
	\begin{cases}
		\min\left\{
		\bar\rho \langle \bar\rho  \rangle^{-4}
		t^{-\frac{\gamma}{2}} ,
		\bar\rho \langle \bar\rho \rangle^{-3}
		t^{-\frac{\gamma}{2}} (\ln t)^{-1} \ln( \ln t )
		\right\}
		,
		&
		1<\gamma<2
		\\
		\min\left\{
		\bar\rho \langle \bar\rho  \rangle^{-4}
		t^{-1}, \bar\rho
		\langle \bar\rho \rangle^{-3}
		t^{-1} (\ln t)^{-2} \ln( \ln t )  \right\} ,
		& \gamma=2
		\\
		\min\left\{
		\bar\rho \langle \bar\rho  \rangle^{-4}
		t^{-1} (\ln t)^{-1}, \bar\rho
		\langle \bar\rho \rangle^{-3}
		t^{-1} (\ln t)^{-2} \ln( \ln t ) \right\},
		&
		\gamma>2 .
	\end{cases}
\end{equation*}
Using \eqref{tilde-H-orth}, we have
\begin{equation}\label{Phie-est1}
\langle \bar\rho  \rangle |\pp_{\bar\rho }\Phi_e|	+ |\Phi_e| \lesssim
	\mu_0 \bar{\rho}^3 \langle \bar{\rho} \rangle^{-3}
	\begin{cases}
		\min\left\{
		t^{-\frac{\gamma}{2}}
	\langle \bar\rho  \rangle^{-1}
	\ln (\bar\rho+2)
		,
		t^{-\frac{\gamma}{2}} (\ln t)^{-1} \ln \ln t
		\right\}
		,
		&
		1<\gamma<2
		\\
		\min\left\{
		t^{-1} \langle \bar\rho  \rangle^{-1}
		\ln (\bar\rho+2),
		t^{-1} (\ln t)^{-2} \ln \ln t   \right\} ,
		& \gamma=2
		\\
		\min\left\{
		t^{-1} (\ln t)^{-1} \langle \bar\rho  \rangle^{-1}
		\ln (\bar\rho+2) ,
		t^{-1} (\ln t)^{-2} \ln \ln t  \right\},
		&
		\gamma>2 .
	\end{cases}
\end{equation}

By the same argument in \cite[(2.28)]{infi4D}, we also have
\begin{equation*}
 |\pp_{t}\Phi_e| \lesssim
	\mu_0  \bar{\rho}^3 \langle \bar{\rho} \rangle^{-3}
	\begin{cases}
		t^{-1-\frac{\gamma}{2}} \ln t
		\langle \bar\rho  \rangle^{-1}
		\ln (\bar\rho+2)
		,
		&
		1<\gamma<2
		\\
		t^{-2} \ln t \langle \bar\rho  \rangle^{-1}
		\ln (\bar\rho+2)  ,
		& \gamma=2
		\\
		t^{-2}  \langle \bar\rho  \rangle^{-1}
		\ln (\bar\rho+2) ,
		&
		\gamma>2 .
	\end{cases}
\end{equation*}

\medskip

\subsection{New error}

\medskip

We now use the expression of $\Phi_e$ to compute the new error.
\begingroup
\allowdisplaybreaks
	\begin{align*}
		&
		E\left[v_1 \right]
		\\
= \ &
\Phi_e \eta'(4z)\frac{2r}{t^{\frac{3}{2}}}-\eta(4z)\pp_t \Phi_e+\frac{16}{t}\eta''(4z)\Phi_e+\frac{8}{\sqrt t}\eta'(4z)\pp_r \Phi_e
+  \frac1r \frac4{\sqrt t}\eta'(4z)\Phi_e
\\
&
+\eta(4z)\pp_{rr}\Phi_e
+\eta(4z) \frac1r \pp_r \Phi_e
-
\eta(4z) \frac{\cos(2Q_\mu)}{r^2}
\Phi_e
-
\eta(z) \frac{\cos(2Q_\mu) -1}{r^2}
\left( \Phi_1+\Phi_2+\Psi_{*} \right)
\\
&
-
\eta(z) \frac{1}{2r^2}
\left[ \sin[2( Q_\mu+\Phi_1+\Phi_2+\Psi_{*} + \eta(4z) \Phi_e )]
-
\sin(2Q_\mu)
-
\cos(2Q_\mu) 2\left( \Phi_1+\Phi_2+\Psi_{*} + \eta(4z) \Phi_e \right)
\right]
\\
&
+
\frac{1}{2r^2}
\Big[
- \sin[2(\eta(z) Q_\mu+\Phi_1+\Phi_2+\Psi_{*} + \eta(4z) \Phi_e )]
+
\eta(z) \sin[2( Q_\mu+\Phi_1+\Phi_2+\Psi_{*} + \eta(4z) \Phi_e )]
\\
& + 2\left( 1-\eta(z)  \right) \left( \Phi_1+\Phi_2+\Psi_{*} \right)
\Big]
\\
= \ &
\Phi_e \eta'(4z)\frac{2r}{t^{\frac{3}{2}}}-\eta(4z)\pp_t \Phi_e+\frac{16}{t}\eta''(4z)\Phi_e+\frac{8}{\sqrt t}\eta'(4z)\pp_r \Phi_e
+  \frac1r \frac4{\sqrt t}\eta'(4z)\Phi_e
\\
&
+\eta(4z) \bar{\mu}_0^{-2}
\left(
\pp_{\bar{\rho} \bar{\rho}}\Phi_e
+  \frac1{\bar{\rho}} \pp_{\bar{\rho}} \Phi_e
-
  \frac{\cos(2Q_{\bar{\mu}_0})}{\bar{\rho}^2}
\Phi_e
\right)
+\eta(4z) \bar{\mu}_0^{-2}
\left(
-
\frac{\cos(2Q_\mu)-\cos(2Q_{\bar{\mu}_0})}{\bar{\rho}^2}
\Phi_e
\right)
\\
&
-
\eta(z) \frac{\cos(2Q_\mu) -1}{r^2}
\left( \Phi_1+\Phi_2+\Psi_{*} \right)
\\
&
-
\eta(z) \frac{1}{2r^2}
\left[ \sin[2( Q_\mu+\Phi_1+\Phi_2+\Psi_{*} + \eta(4z) \Phi_e )]
-
\sin(2Q_\mu)
-
\cos(2Q_\mu) 2\left( \Phi_1+\Phi_2+\Psi_{*} + \eta(4z) \Phi_e \right)
\right]
\\
&
+
\frac{1}{2r^2}
\Big[
- \sin[2(\eta(z) Q_\mu+\Phi_1+\Phi_2+\Psi_{*} + \eta(4z) \Phi_e )]
+
\eta(z) \sin[2( Q_\mu+\Phi_1+\Phi_2+\Psi_{*} + \eta(4z) \Phi_e )]
\\
& + 2\left( 1-\eta(z)  \right) \left( \Phi_1+\Phi_2+\Psi_{*} \right)
\Big] \\
= \ &
\Phi_e \eta'(4z)\frac{2r}{t^{\frac{3}{2}}}-\eta(4z)\pp_t \Phi_e+\frac{16}{t}\eta''(4z)\Phi_e+\frac{8}{\sqrt t}\eta'(4z)\pp_r \Phi_e
+  \frac1r \frac4{\sqrt t}\eta'(4z)\Phi_e
\\
&
+\eta(4z) \bar{\mu}_0^{-1}
\Bigg[
\eta(z)
\frac{-8 \bar\rho }{(\bar{\rho}^2+1)^2}
\left( \varphi[\bar{\mu}_0](r,t)
+
\psi_*(r,t)
\right)
+
  \mathcal{M}[\bar{\mu}_0]
\frac{\eta(\bar{\rho}) \mathcal{Z}(\bar{\rho})  }{\int_{0}^3 \eta(x) \mathcal{Z}^2(x) xdx }
\Bigg]
\\
&
+
\eta(z) \mu^{-1}  \frac{8 \rho }{ \left( \rho^{2}+1\right)^2}
\left( \varphi[\mu] +\psi_{*} \right)
+\eta(4z) \bar{\mu}_0^{-2}
\left(
-
\frac{\cos(2Q_\mu)-\cos(2Q_{\bar{\mu}_0})}{\bar{\rho}^2}
\Phi_e
\right)
\\
&
-
\eta(z) \frac{1}{2r^2}
\left[ \sin[2( Q_\mu+\Phi_1+\Phi_2+\Psi_{*} + \eta(4z) \Phi_e )]
-
\sin(2Q_\mu)
-
\cos(2Q_\mu) 2\left( \Phi_1+\Phi_2+\Psi_{*} + \eta(4z) \Phi_e \right)
\right]
\\
&
+
\frac{1}{2r^2}
\Big[
- \sin[2(\eta(z) Q_\mu+\Phi_1+\Phi_2+\Psi_{*} + \eta(4z) \Phi_e )]
+
\eta(z) \sin[2( Q_\mu+\Phi_1+\Phi_2+\Psi_{*} + \eta(4z) \Phi_e )]
\\
& + 2\left( 1-\eta(z)  \right) \left( \Phi_1+\Phi_2+\Psi_{*} \right)
\Big]
\\
= \ &
\Phi_e \eta'(4z)\frac{2r}{t^{\frac{3}{2}}}-\eta(4z)\pp_t \Phi_e+\frac{16}{t}\eta''(4z)\Phi_e+\frac{8}{\sqrt t}\eta'(4z)\pp_r \Phi_e
+  \frac1r \frac4{\sqrt t}\eta'(4z)\Phi_e
\\
&
+
\eta(4z) \bar{\mu}_0^{-1}
\frac{-8 \bar\rho }{(\bar{\rho}^2+1)^2}
\left( \varphi[\bar{\mu}_0](r,t)
+
\psi_*(r,t)
\right)
+
\eta(z) \mu^{-1}  \frac{8 \rho }{ \left( \rho^{2}+1\right)^2}
\left( \varphi[\mu](r,t) +\psi_{*}(r,t) \right)
\\
&
+
\eta(4z) \bar{\mu}_0^{-1}
\mathcal{M}[\bar{\mu}_0]
\frac{\eta(\bar{\rho}) \mathcal{Z}(\bar{\rho})  }{\int_{0}^3 \eta(x) \mathcal{Z}^2(x) xdx }
+\eta(4z) \bar{\mu}_0^{-2}
\left(
-
\frac{\cos(2Q_\mu)-\cos(2Q_{\bar{\mu}_0})}{\bar{\rho}^2}
\Phi_e
\right)
\\
&
-
\eta(z) \frac{1}{2r^2}
\Big[ \sin[2( Q_\mu+\Phi_1+\Phi_2+\Psi_{*} + \eta(4z) \Phi_e )]
-
\sin(2Q_\mu)
-
\cos(2Q_\mu) 2\left( \Phi_1+\Phi_2+\Psi_{*} + \eta(4z) \Phi_e \right)
\Big]
\\
&
+
\frac{1}{2r^2}
\Big[
- \sin[2(\eta(z) Q_\mu+\Phi_1+\Phi_2+\Psi_{*} + \eta(4z) \Phi_e )]
+
\eta(z) \sin[2( Q_\mu+\Phi_1+\Phi_2+\Psi_{*} + \eta(4z) \Phi_e )]
\\
& + 2\left( 1-\eta(z)  \right) \left( \Phi_1+\Phi_2+\Psi_{*} \right)
\Big].
\end{align*}
\endgroup

Since
\begin{equation*}
\begin{aligned}
&
\eta(4z) \bar{\mu}_0^{-1}
\frac{-8 \bar\rho }{(\bar{\rho}^2+1)^2}
\left( \varphi[\bar{\mu}_0](r,t)
+
\psi_*(r,t)
\right)
+
\eta(z) \mu^{-1}  \frac{8 \rho }{ \left( \rho^{2}+1\right)^2}
\left( \varphi[\mu](r,t) +\psi_{*}(r,t) \right)
\\
= \ &
\left( \eta(4z)  -
\eta(z) \right) \bar{\mu}_0^{-1}
\frac{-8 \bar\rho }{(\bar{\rho}^2+1)^2}
\left( \varphi[\bar{\mu}_0](r,t)
+
\psi_*(r,t)
\right)
\\
& +
\eta(z)
\left(
 \mu^{-1}  \frac{8 \rho }{ \left( \rho^{2}+1\right)^2}
 -
 \bar{\mu}_0^{-1}
\frac{8 \bar\rho }{(\bar{\rho}^2+1)^2}
\right)
\left( \varphi[\bar{\mu}_0](r,t)
+
\psi_*(r,t)
\right)
\\
&
+
\eta(z) \mu^{-1}  \frac{8 \rho }{ \left( \rho^{2}+1\right)^2}
\left( \varphi[\mu](r,t) -
\varphi[\bar{\mu}_0](r,t)
 \right)
\end{aligned}
\end{equation*}
with $\mu=\bar{\mu}_0 + \mu_1$, we can write
\begin{equation}\label{def-E[v_1]}
\begin{aligned}
E[v_1]=&-\eta(4z)\pp_t \Phi_e+
\eta(z)
\left(
 \mu^{-1}  \frac{8 \rho }{ \left( \rho^{2}+1\right)^2}
 -
 \bar{\mu}_0^{-1}
\frac{8 \bar\rho }{(\bar{\rho}^2+1)^2}
\right)
\left( \varphi[\bar{\mu}_0](r,t)
+
\psi_*(r,t)
\right)
\\
&
+
\eta(z) \mu^{-1}  \frac{8 \rho }{ \left( \rho^{2}+1\right)^2}
\left( \varphi[\mu](r,t) -
\varphi[\bar{\mu}_0](r,t)
 \right) +E_\eta+E_e\\
 &
+
\eta(4z) \bar{\mu}_0^{-1}
\mathcal{M}[\bar{\mu}_0]
\frac{\eta(\bar{\rho}) \mathcal{Z}(\bar{\rho})  }{\int_{0}^3 \eta(x) \mathcal{Z}^2(x) xdx }
+\eta(4z) \bar{\mu}_0^{-2}
\left(
-
\frac{\cos(2Q_\mu)-\cos(2Q_{\bar{\mu}_0})}{\bar{\rho}^2}
\Phi_e
\right),
\end{aligned}
\end{equation}
where
\begin{equation}\label{def-E_eta}
\begin{aligned}
E_{\eta}:=&~ \Phi_e \eta'(4z)\frac{2r}{t^{\frac{3}{2}}}+\frac{16}{t}\eta''(4z)\Phi_e+\frac{8}{\sqrt t}\eta'(4z)\pp_r \Phi_e
+  \frac1r \frac4{\sqrt t}\eta'(4z)\Phi_e\\
&~+\left( \eta(4z)  -
\eta(z) \right) \bar{\mu}_0^{-1}
\frac{-8 \bar\rho }{(\bar{\rho}^2+1)^2}
\left( \varphi[\bar{\mu}_0](r,t)
+
\psi_*(r,t)
\right) .
\end{aligned}
\end{equation}

\medskip

\section{Gluing system}

\medskip

Having improved spatial decay by non-local corrections and time decay by solving the linearized elliptic equation, we are now ready to formulate the gluing system to deal with the remaining errors. We introduce the correction term
\begin{equation*}
	\Psi (r,t) + \eta_{R}(\rho)\phi (\rho,t)
	,
	\quad \rho =\mu^{-1} r
\end{equation*}
where $\eta$ is a smooth cut-off fucntion and $0\le \eta \le 1$, $\eta(s) =1$ for $s\le 1$ and $\eta(s) =0$ for $s\ge 2$; $\eta_{R}(\rho) = \eta(R^{-1} \rho)$ with $R$ depending on time and to be determined later. Recall that
\begin{equation*}
v_1(r,t) =\eta(z) Q_\mu+\Phi_1+\Phi_2+\Psi_{*} +\eta(4z)\Phi_e .
\end{equation*}
Then
\begingroup
\allowdisplaybreaks
\begin{align*}
	&
E\left[v_1 + \Psi (r,t) + \eta_{R}(\rho)\phi (\rho,t)  \right]
\\
= \ &
\pp_{rr} \left( \Psi + \eta_R(\rho) \phi(\rho,t) \right)
+
\frac{1}{r}
\pp_{r} \left( \Psi + \eta_R(\rho) \phi(\rho,t) \right)
-
\pp_{t} \left( \Psi + \eta_R(\rho) \phi(\rho,t) \right)
\\
&
+
\frac{\sin\left(2 v_1\right) -\sin\left( 2\left(v_1 + \Psi  + \eta_{R} \phi \right) \right)}{2r^2}
+
E\left[v_1 \right]
\\
= \ &
  \pp_{rr} \Psi +
\eta''(\frac{\rho}{R}) (\mu R)^{-2}
\phi
+ 2 (\mu R)^{-1} \mu^{-1}
\eta'(\frac{\rho}{R}) \pp_{\rho} \phi
+
\eta_R \mu^{-2} \pp_{\rho\rho}\phi
\\
&
+
\frac{1}{r}
\pp_{r} \Psi
+\eta'(\frac{\rho}{R}) \mu^{-2} (\rho R)^{-1} \phi+
\eta_R \mu^{-2} \frac{\pp_{\rho} \phi}{\rho}
\\
&
-
\pp_{t}\Psi
+
\eta'(\frac{\rho}{R}) \frac{\rho}{R}
\frac{(\mu R)'}{\mu R} \phi
-\eta_R \pp_{t} \phi
+
\eta_R \rho \pp_{\rho} \phi \mu^{-1} \dot{\mu}
\\
&
+
\frac{\sin\left(2v_1\right) -\sin\left( 2\left(v_1 + \Psi  + \eta_{R} \phi\right)  \right)}{2r^2}
+
E\left[v_1 \right]
\\
= \ &
-
\pp_{t}\Psi
+
\pp_{rr} \Psi
+
\frac{1}{r}
\pp_{r} \Psi
-
\frac{1}{r^2} \Psi
\\
&
+
\eta''(\frac{\rho}{R}) (\mu R)^{-2}
\phi
+\eta'(\frac{\rho}{R}) \mu^{-2} (\rho R)^{-1} \phi
+ 2 (\mu R)^{-1} \mu^{-1}
\eta'(\frac{\rho}{R}) \pp_{\rho} \phi
+
\eta'(\frac{\rho}{R}) \frac{\rho}{R}
\frac{(\mu R)'}{\mu R} \phi
+
\eta_R \rho \pp_{\rho} \phi \mu^{-1} \dot{\mu}
\\
&
-\eta_R \pp_{t} \phi
+
\eta_R \mu^{-2} \pp_{\rho\rho}\phi
+
\eta_R \mu^{-2} \frac{\pp_{\rho} \phi}{\rho}
-
\mu^{-2}
\frac{  \rho^{4} - 6 \rho^{2} + 1}{ \rho^2\left( \rho^{2} + 1  \right)^2}  \eta_{R} \phi
+
\eta_R \mu^{-2}  \frac{8}{ \left( \rho^{2}+1\right)^2} \Psi
\\
 &
+
(1-\eta_R) \mu^{-2} \frac{8}{ \left( \rho^{2}+1\right)^2}  \Psi
- \frac{\eta(z)-1}{r^2} \cos(2Q_{\mu}) \Psi
-
\mu^{-2} \left(\eta(z) -1 \right)
\frac{  \rho^{4} - 6 \rho^{2} + 1}{ \rho^2\left( \rho^{2} + 1  \right)^2}  \eta_{R} \phi
\\
& +
\frac{1}{2r^2}
\Big[ \sin\left( 2\left( \eta(z) Q_\mu+\Phi_1+\Phi_2+\Psi_{*} +\eta(4z)\Phi_e \right) \right)
-
\eta(z) \sin\left( 2\left(  Q_\mu+\Phi_1+\Phi_2+\Psi_{*} +\eta(4z)\Phi_e \right) \right)
\\
&+
\eta(z) \sin\left( 2\left(  Q_\mu+\Phi_1+\Phi_2+\Psi_{*} +\eta(4z)\Phi_e + \Psi  + \eta_{R} \phi \right) \right)
\\
&
-\sin\left( 2\left( \eta(z) Q_\mu+\Phi_1+\Phi_2+\Psi_{*} +\eta(4z)\Phi_e + \Psi  + \eta_{R} \phi \right) \right) \Big]
\\
& +
\frac{\eta(z)}{2r^2}
\Big\{ \sin\left( 2\left(  Q_\mu+\Phi_1+\Phi_2+\Psi_{*} +\eta(4z)\Phi_e \right) \right)
-\sin(2Q_{\mu})
-2\cos(2Q_{\mu}) \left(  \Phi_1+\Phi_2+\Psi_{*} +\eta(4z)\Phi_e \right)
\\
&
-
\Big[
\sin\left( 2\left(  Q_\mu+\Phi_1+\Phi_2+\Psi_{*} +\eta(4z)\Phi_e + \Psi  + \eta_{R} \phi \right) \right)
-\sin(2Q_{\mu})
\\
& -2\cos(2Q_{\mu}) \left( \Phi_1+\Phi_2+\Psi_{*} +\eta(4z)\Phi_e + \Psi  + \eta_{R} \phi \right)
\Big]
\Big\}  +
E\left[v_1 \right],
\end{align*}
\endgroup
where we have used
\begingroup
\allowdisplaybreaks
\begin{align*}
&
\frac{1}{2r^2}
\Big[ \sin\left( 2\left( \eta(z) Q_\mu+\Phi_1+\Phi_2+\Psi_{*} +\eta(4z)\Phi_e \right) \right)
\\
&
-\sin\left( 2\left( \eta(z) Q_\mu+\Phi_1+\Phi_2+\Psi_{*} +\eta(4z)\Phi_e + \Psi  + \eta_{R} \phi \right) \right) \Big]
\\
= \ &
- \frac{\eta(z)}{r^2} \cos(2Q_{\mu}) \left( \Psi  + \eta_{R} \phi \right)
\\
& +
\frac{1}{2r^2}
\Big[ \sin\left( 2\left( \eta(z) Q_\mu+\Phi_1+\Phi_2+\Psi_{*} +\eta(4z)\Phi_e \right) \right)
-
\eta(z) \sin\left( 2\left(  Q_\mu+\Phi_1+\Phi_2+\Psi_{*} +\eta(4z)\Phi_e \right) \right)
\\
&+
\eta(z) \sin\left( 2\left(  Q_\mu+\Phi_1+\Phi_2+\Psi_{*} +\eta(4z)\Phi_e + \Psi  + \eta_{R} \phi \right) \right)
\\
&
-\sin\left( 2\left( \eta(z) Q_\mu+\Phi_1+\Phi_2+\Psi_{*} +\eta(4z)\Phi_e + \Psi  + \eta_{R} \phi \right) \right) \Big]
\\
& +
\frac{\eta(z)}{2r^2}
\Big\{ \sin\left( 2\left(  Q_\mu+\Phi_1+\Phi_2+\Psi_{*} +\eta(4z)\Phi_e \right) \right)
-\sin(2Q_{\mu})
-2\cos(2Q_{\mu}) \left(  \Phi_1+\Phi_2+\Psi_{*} +\eta(4z)\Phi_e \right)
\\
&
-
\Big[
\sin\left( 2\left(  Q_\mu+\Phi_1+\Phi_2+\Psi_{*} +\eta(4z)\Phi_e + \Psi  + \eta_{R} \phi \right) \right)
-\sin(2Q_{\mu})
\\
& -2\cos(2Q_{\mu}) \left( \Phi_1+\Phi_2+\Psi_{*} +\eta(4z)\Phi_e + \Psi  + \eta_{R} \phi \right)
\Big]
 \Big\} .
\end{align*}
\endgroup
In order to make $E\left[v_1 + \Psi (r,t) + \eta_{R}(\rho)\phi (\rho,t)  \right]=0$, it suffices to solve the following gluing system.\\
\noindent $\bullet$ The outer problem:
\begin{equation}\label{out-eq}
\begin{aligned}
& \pp_{t}\Psi  =
\pp_{rr} \Psi
+
\frac{1}{r}
\pp_{r} \Psi
-
\frac{1}{r^2} \Psi+\mathcal G
\end{aligned}
\end{equation}
where
\begin{equation}\label{def-G}
\begin{aligned}
\mathcal G:=&~
(1-\eta_R) \mu^{-2} \frac{8}{ \left( \rho^{2}+1\right)^2}  \Psi
\\
&
+
\eta''(\frac{\rho}{R}) (\mu R)^{-2}
\phi
+\eta'(\frac{\rho}{R}) \mu^{-2} (\rho R)^{-1} \phi
+ 2 (\mu R)^{-1} \mu^{-1}
\eta'(\frac{\rho}{R}) \pp_{\rho} \phi
+
\eta'(\frac{\rho}{R}) \frac{\rho}{R}
\frac{(\mu R)'}{\mu R} \phi
\\
&
- \frac{\eta(z)-1}{r^2} \cos(2Q_{\mu}) \Psi
-
\mu^{-2} \left(\eta(z) -1 \right)
\frac{  \rho^{4} - 6 \rho^{2} + 1}{ \rho^2\left( \rho^{2} + 1  \right)^2}  \eta_{R} \phi
\\
& +
\frac{1}{2r^2}
\Big[ \sin\left( 2\left( \eta(z) Q_\mu+\Phi_1+\Phi_2+\Psi_{*} +\eta(4z)\Phi_e \right) \right)
-
\eta(z) \sin\left( 2\left(  Q_\mu+\Phi_1+\Phi_2+\Psi_{*} +\eta(4z)\Phi_e \right) \right)
\\
&+
\eta(z) \sin\left( 2\left(  Q_\mu+\Phi_1+\Phi_2+\Psi_{*} +\eta(4z)\Phi_e + \Psi  + \eta_{R} \phi \right) \right)
\\
&
-\sin\left( 2\left( \eta(z) Q_\mu+\Phi_1+\Phi_2+\Psi_{*} +\eta(4z)\Phi_e + \Psi  + \eta_{R} \phi \right) \right) \Big]
\\
& +
\frac{\eta(z)}{2r^2}
\Big\{ \sin\left( 2\left(  Q_\mu+\Phi_1+\Phi_2+\Psi_{*} +\eta(4z)\Phi_e \right) \right)
-\sin(2Q_{\mu})
-2\cos(2Q_{\mu}) \left(  \Phi_1+\Phi_2+\Psi_{*} +\eta(4z)\Phi_e \right)
\\
&
-
\Big[
\sin\left( 2\left(  Q_\mu+\Phi_1+\Phi_2+\Psi_{*} +\eta(4z)\Phi_e + \Psi  + \eta_{R} \phi \right) \right)
-\sin(2Q_{\mu})
\\
& -2\cos(2Q_{\mu}) \left( \Phi_1+\Phi_2+\Psi_{*} +\eta(4z)\Phi_e + \Psi  + \eta_{R} \phi \right)
\Big]
\Big\}  +
(1-\eta_R)E\left[v_1 \right],
\end{aligned}
\end{equation}
and $E\left[v_1 \right]$ is defined in \eqref{def-E[v_1]}.

\medskip

\noindent $\bullet$ The inner problem:
\begin{equation}
	\begin{aligned}
	 \mu^{2} \pp_{t} \phi  = &
	  \pp_{\rho\rho}\phi
		+
 \frac{\pp_{\rho} \phi}{\rho}
		-
		\frac{  \rho^{4} - 6 \rho^{2} + 1}{ \rho^2\left( \rho^{2} + 1  \right)^2}   \phi
		+
	  \dot{\mu}\mu  \rho \pp_{\rho} \phi
		+
	   \frac{8}{ \left( \rho^{2}+1\right)^2} \Psi
		+
		\mu^2  E\left[v_1 \right] ,
	\quad
		\rho \le 2R.
	\end{aligned}
\end{equation}
For the dealing of inner problem, it will be more convenient to use the $(\rho,\tau)$ variables with
\begin{equation}\label{def-tau}
\tau(t) =\int_{t_0}^{t} \mu^{-2}(s) ds + C_{\tau} t_0 \mu^{-2}(t_0)
\sim
\begin{cases}
t^{\gamma-1} (\ln t)^2 ,
&
1<\gamma<2
\\
t ,
&
\gamma=2
\\
t (\ln t)^2 ,
&
\gamma>2,
\end{cases}
\end{equation}
where $C_{\tau}$ is a large constant.

\medskip

\section{Orthogonal equation}\label{Sec-nonlocal}

\medskip

In this section, we formulate the orthogonal equation for $\mu_1$. Such orthogonality is required for finding well-behaved inner solution (see the linear theory given in  Appendix \ref{sec-inner}).
The orthogonal equation is given by
\begin{equation}\label{ortho-eq}
\int_0^{R_0} \left( \frac{8}{ \left( \rho^{2}+1\right)^2} \Psi
		+
		\mu^2  E\left[v_1 \right]  \right) \mathcal{Z}(\rho) \rho d\rho + O(R_0^{-\epsilon_0}) c_*\left[  \frac{8}{ \left( \rho^{2}+1\right)^2} \Psi
		+
		\mu^2  E\left[v_1 \right]  \right] =0
		\end{equation}
where $c_*$ is given in Proposition \ref{R0-linear}.

Notice that
\begin{equation*}
	\mu^{c}  \frac{\rho^a }{ \left( \rho^{2}+1\right)^b}
	-
	\bar{\mu}_0^{c}
	\frac{\bar\rho^a }{(\bar{\rho}^2+1)^b}
	=
	\mu^{c-1} \mu_1
	\left( 1+ O(\mu^{-1}\mu_1) \right)
	\frac{(2b-a+c)\rho^{a+2}+(c-a)\rho^a}{(\rho^2+1)^{b+1}}
\end{equation*}
since for
\begin{equation*}
	f(\theta)= \mu_{\theta}^{c}
	\frac{ \rho_{\theta}^a }{( \rho_{\theta}^2+1)^b} ,
	\quad
	\rho_{\theta} := \frac{r}{\mu_{\theta}}, \quad \mu_{\theta}:= \theta \mu + (1-\theta)\bar{\mu}_0
	=
	\mu -(1- \theta) \mu_1
\end{equation*}
we have
\begin{equation*}
	\begin{aligned}
		f'(\theta)= \ & c\mu_{\theta}^{c-1} \mu_1
		\frac{ \rho_{\theta}^a }{( \rho_{\theta}^2+1)^b}
		+
		\mu_{\theta}^{c-1} \mu_1
		\frac{\rho_\theta^a \left[ (2b-a) \rho_\theta^2-a\right] }{\left( \rho_\theta^2 +1\right)^{b+1} }
		\\
		= \ &
		\mu^{c-1} \mu_1
		\left( 1+ O(\mu^{-1}\mu_1) \right)
		\frac{(2b-a+c)\rho^{a+2}+(c-a)\rho^a}{(\rho^2+1)^{b+1}}  .
	\end{aligned}
\end{equation*}
By \eqref{varphi+psi} and Proposition \ref{varphi-coro}, for $R_0 \mu \ll t^{\frac{1}{2}}$, we have
\begingroup
\allowdisplaybreaks
\begin{align*}
&
\int_0^{R_0}
\left[    \mu^{-1}  \frac{8 \rho }{ \left( \rho^{2}+1\right)^2}
\left( \varphi[\mu](r,t) +\psi_{*}(r,t) \right)
-
 \bar{\mu}_0^{-1}
\frac{8 \bar\rho }{(\bar{\rho}^2+1)^2}
\left( \varphi[\bar{\mu}_0](r,t)
+
\psi_*(r,t)
\right)
\right]
\mathcal{Z} (\rho) \rho d\rho
\\
= \ &
\int_0^{R_0}
\Big[     \frac{8 \mu^{-1}  \rho }{ \left( \rho^{2}+1\right)^2}
\left( \varphi[\mu](r,t) -\varphi[\bar{\mu}_0](r,t)  \right)
+
\left(
 \frac{8 \mu^{-1}  \rho }{ \left( \rho^{2}+1\right)^2}
-
\frac{8 \bar{\mu}_0^{-1}  \bar\rho }{(\bar{\rho}^2+1)^2}
\right)
\left( \varphi[\bar{\mu}_0](r,t)
+
\psi_*(r,t)
\right)
\Big]
\mathcal{Z} (\rho) \rho d\rho
\\
= \ &
\int_0^{R_0}
\Bigg\{     \frac{8 \mu^{-1}  \rho }{ \left( \rho^{2}+1\right)^2}
\bigg[
-2^{-1} \Big( \mu_1  t^{-1}
+
\int_{t/2}^{t-\mu_{0}^2 } \frac{\dot\mu_{1} (s)}{t-s} d s
\Big)
\\
&
+ O\Big( |\mu_1| t^{-2} r^2 +  |\dot{\bar{\mu}}_0| \sup\limits_{t_{1}\in[t/2,t]}\Big(\frac{|\mu_1(t_1)|}{ \bar{\mu}_0(t)} + \frac{|\dot\mu_{1}(t_1)|}{|\dot{\bar{\mu}}_0(t)|}  \Big)  \frac{ r}{\bar{\mu}_0} \Big)
+
\tilde{g}[\bar{\mu}_0,\mu_1]
\bigg]
\\
&
+
8 \mu^{-2} \mu_1
\left( 1+ O(\mu^{-1}\mu_1) \right)
\frac{2\rho^3-2\rho}{\left( \rho^2 +1\right)^{3} }
\Big[
 O( \mu_0  t^{-2} r^2 )
+
|\dot{\mu}_0|
\min\{ \langle \rho \rangle,\ln t \}
\\
&
+ O( \mu_0 \rho  t^{-\frac{1}{2}} v_{\gamma}(t) )
+
O\left( \ln \ln t |\dot{\mu}_0| \right)
\Big]
\Bigg\}
\frac{\rho^2}{\rho^2 +1}  d\rho
\\
= \ &
\mu^{-1}
\int_0^{R_0}
\Bigg\{     \frac{8   \rho }{ \left( \rho^{2}+1\right)^2}
\bigg[
-2^{-1} \Big( \mu_1  t^{-1}
+
\int_{t/2}^{t-\mu_{0}^2 } \frac{\dot\mu_{1} (s)}{t-s} d s
\Big)
\\
&
+ O\Big( |\mu_1| t^{-2} \mu_0^2 \rho^2 +  |\dot{\bar{\mu}}_0| \sup\limits_{t_{1}\in[t/2,t]}\Big(\frac{|\mu_1(t_1)|}{ \bar{\mu}_0(t)} + \frac{|\dot\mu_{1}(t_1)|}{|\dot{\bar{\mu}}_0(t)|}  \Big)  \rho \Big)
+
\tilde{g}[\bar{\mu}_0,\mu_1]
\bigg]
\\
&
+
8 \mu^{-1} \mu_1
\left( 1+ O(\mu^{-1}\mu_1) \right)
\frac{2\rho^3-2\rho}{\left( \rho^2 +1\right)^{3} }
\Big[
O( \mu_0  t^{-2}\mu_0^2 \rho^2 )
+
|\dot{\mu}_0|
\min\{ \langle \rho \rangle,\ln t \}
\\
&
+ O( \mu_0 \rho  t^{-\frac{1}{2}} v_{\gamma}(t) )
+
O\left( \ln \ln t |\dot{\mu}_0| \right)
\Big]
\Bigg\}
\frac{\rho^2}{\rho^2 +1}  d\rho
\\
= \ &
\mu^{-1}
\Bigg\{
-\left( 1+O(R_0^{-2})\right) \Big( \mu_1  t^{-1}
+
\int_{t/2}^{t-\mu_{0}^2 } \frac{\dot\mu_{1} (s)}{t-s} d s
\Big)
\\
&
+ O\Big( |\mu_1| t^{-2} \mu_0^2 \ln R_0
+
  |\dot{\mu}_0| \sup\limits_{t_{1}\in[t/2,t]}\Big(\frac{|\mu_1(t_1)|}{ \mu_0(t)} + \frac{|\dot\mu_{1}(t_1)|}{|\dot{\mu}_0(t)|}  \Big)   \Big)
\\
& +
O\Big(|\dot\mu_0  | \ln t
\sup\limits_{t_{1}\in[t/2,t]} \Big(\frac{|\mu_1(t_1)|}{\mu_0(t)}
+ \frac{|\dot\mu_{1}(t_1)|}{|\dot\mu_0(t)|}\Big)^2\Big)
\\
&
+ 	O \bigg(  t^{-2} \int_{t_0/2}^{t}
\Big( s^{-1} |\mu_1(s)| \mu_0^{2}(s) + s |\dot\mu_0(s)| \Big(\frac{|\mu_1(s)|}{ \mu_0(s) } + \frac{|\dot\mu_{1}(s)|}{|\dot\mu_0(s)|}  \Big) \Big)  ds \bigg)
\\
&
+
\mu^{-1} \mu_1
\left( 1+ O(\mu^{-1}\mu_1) \right)
\Big[
O\left( \mu_0  t^{-2}\mu_0^2 \ln R_0 + |\dot{\mu}_0| \right)
+ O\left( \mu_0  t^{-\frac{1}{2}} v_{\gamma}(t) \right)
+
O\left( \ln \ln t |\dot{\mu}_0| \right)
\Big]
\Bigg\}  ,
\end{align*}
\endgroup
\begin{equation*}
	\big|   \bar{\mu}_0^{-1}
	\mathcal{M}[\bar{\mu}_0]  \big|
	\lesssim \mu_0^{-1} t^{-2}.
\end{equation*}
By \eqref{Phie-est1}, one has
\begin{equation*}
		\int_0^{R_0} \eta(4z) \bar{\mu}_0^{-2}
		\left(
		-
		\frac{\cos(2Q_\mu)-\cos(2Q_{\bar{\mu}_0})}{\bar{\rho}^2}
		\Phi_e
		\right) \mathcal{Z} (\rho) \rho d\rho
		=
		\int_0^{R_0}
		O \left( \mu_1 \mu_0^{-3} |\Phi_e|
		\rho^2 \langle \rho \rangle^{-6}  \right) d\rho
		=
		O \left( \mu_1 \mu_0^{-3}  |\dot{\mu}_0| \ln\ln t \right).
\end{equation*}

The contribution from the outer problem is given  by
\begin{equation*}
\int_0^{R_0}\frac{8}{ \left( \rho^{2}+1\right)^2} \lambda \rho \psi(\lambda \rho,t)  \mathcal{Z} (\rho) \rho d\rho
=
\lambda   \int_0^{R_0}\frac{8\rho^3}{ \left( \rho^{2}+1\right)^3} \psi(\lambda \rho,t)  d\rho .
\end{equation*}

Using the proposition in the next subsection, we will be able to solve \eqref{ortho-eq} with the estimate $|\dot\mu_{1}| \lesssim \vartheta(t)  $.

\medskip

\subsection{A linear problem for the orthogonal equation}\label{Sec-linearnonlocal}

\medskip

We consider a model problem for the orthogonal equation:
\begin{equation}\label{linear-nonlocal}
	\int_{t/2}^{t-\mu_0^2(t)}\frac{\dot\mu_1(s)}{t-s}ds +\frac{\mu_1(t)}{t}=a_1[\mu](t)+
	a_2\left[\mu_1,\dot{\mu}_1 \right](t)
	+
	a_3\left[\mu_1,\dot{\mu}_1 \right](t)
\end{equation}
with $\mu=\bar\mu_0+\mu_1$, where $a_1$ represents the new error after adding elliptic correction $\Phi_e$. For $p\ne -1$, we define the following norm for $a_1[\mu](t)$
\begin{equation}\label{topo-a1}
	\|a_1\|_{p}:=\sup_{ t\ge t_0, \mu_1\in B_{\mu_1}} \left[ \left(t^{p} \ln t \right)^{-1} |a_1[\mu](t)|+ t|\pp_{\mu} a_1[\mu](t)|\right] ,
\end{equation}

\begin{equation}
	[a_1]_{p,\alpha}:=\sup_{ t_1,t_2\in [t/2,t],  \mu_1\in B_{\mu_1} }   \left(t^{p-\alpha} \ln t \right)^{-1}  \frac{|a_1[\mu](t_1)-
		a_1[\mu](t_2) |}{|t_1-t_2|^{\alpha}}    ,
\end{equation}
for some  $0<\alpha<1$.
\begin{equation}
	\begin{aligned}
		&	a_2[\mu_1 ,\dot{\mu}_1 ](t) =
		O\Big( |\mu_1| t^{-2} \mu_0^2 \ln R_0
		+
		|\dot{\mu}_0| \sup\limits_{t_{1}\in[t/2,t]}\Big(\frac{|\mu_1(t_1)|}{ \mu_0(t)} + \frac{|\dot\mu_{1}(t_1)|}{|\dot{\mu}_0(t)|}  \Big)   \Big)
		\\
		& +
		O\Big(|\dot\mu_0  | \ln t
		\sup\limits_{t_{1}\in[t/2,t]} \Big(\frac{|\mu_1(t_1)|}{\mu_0(t)}
		+ \frac{|\dot\mu_{1}(t_1)|}{|\dot\mu_0(t)|}\Big)^2\Big)
		\\
		&
		+ 	O \bigg(  t^{-2} \int_{t_0/2}^{t}
		\Big( s^{-1} |\mu_1(s)| \mu_0^{2}(s) + s |\dot\mu_0(s)| \Big(\frac{|\mu_1(s)|}{ \mu_0(s) } + \frac{|\dot\mu_{1}(s)|}{|\dot\mu_0(s)|}  \Big) \Big)  ds \bigg)
		\\
		&
		+
		\mu^{-1} \mu_1
		\left( 1+ O(\mu^{-1}\mu_1) \right)
		\Big[
		O\left( \mu_0  t^{-2}\mu_0^2 \ln R_0 + |\dot{\mu}_0| \right)
		+ O\left( \mu_0  t^{-\frac{1}{2}} v_{\gamma}(t) \right)
		+
		O\left( \ln \ln t |\dot{\mu}_0| \right)
		\Big] ,
	\end{aligned}
\end{equation}
\begin{equation*}
	\left|
	a_3\left[\mu_1,\dot{\mu}_1 \right](t)  \right|
	\lesssim R_0^{-\epsilon_0}
	\left( \left|a_1[\mu_1](t) \right| +
	\left|a_2\left[\mu_1,\dot{\mu}_1 \right](t) \right|
	+
	\int_{t/2}^{t-\mu_0^2(t)}\frac{|\dot\mu_1(s)|}{t-s}ds +\frac{|\mu_1(t)|}{t}
	\right) .
\end{equation*}

For some $0<\nu<1$, we write
\begin{equation}
	\begin{aligned}
		\int_{t/2}^{t-\mu_0^2(t)}\frac{\dot\mu_1(s)}{t-s}ds=&~\int_{t/2}^{t-t^{1-\nu}}\frac{\dot\mu_1(s)}{t-s}ds+\int_{ t-t^{1-\nu}  }^{t-\mu_{0}^2(t) } \frac{\dot\mu_1(s) - \dot\mu_1(t)}{t-s} d s\\
		&~+\dot\mu_1(t)\left[(1-\nu)\ln t-2\ln \mu_0(t)\right] .
	\end{aligned}
\end{equation}
We leave the partial error
\begin{align*}
	\mathcal{E}_{\nu}[\mu_{1}] := \int_{ t-t^{1-\nu}  }^{t-\mu_{0}^2(t) } \frac{\dot\mu_{1}(s) - \dot\mu_{1}(t)}{t-s} d s
\end{align*}
to the nonorthogonal inner problem and
consider
\begin{equation}\label{mu1-eq}
	\begin{aligned}
		\dot\mu_1(t)
		=  \ &
		\left[(1-\nu)\ln t-2\ln \mu_0(t)\right]^{-1}
		\Bigg(
		-
		t^{-1} \mu_1
		-
		\int_{t/2}^{t-t^{1-\nu}}\frac{\dot\mu_1(s)}{t-s}ds
		\\
		&
		+
		a_1[\mu](t)+
		a_2\left[\mu_1,\dot{\mu}_1 \right](t)
		+
		a_3\left[\mu_1,\dot{\mu}_1 \right](t)
		\Bigg) ,\quad t\ge t_0.
	\end{aligned}
\end{equation}

\begin{prop}\label{prop-toynonlocal}
For $\mu_0$ given in \eqref{def-mu_0}. Suppose $p\ne -1$,
	$p<-\frac{\gamma}{2}$ if $1<\gamma<2$, and $p<-1$ if $\gamma\ge 2$; $2\nu <\min\left\{ \gamma-1,1\right\}$; $\|a_1\|_{p} \le C_{a_1}$ for a constant $C_{a_1}\ge 1$.
	Then for $R_0, t_0$ sufficiently large, there exists a unique solution $\mu_1$ to \eqref{mu1-eq} satisfying
	\begin{equation*}
|\dot\mu_{1}(t)| \lesssim t^p \|a_1\|_{p},\quad
\mu_1=
\begin{cases}
	\int_{t_0}^{t} \dot{\mu}_1(s) ds,
	&
	p > -1
	\\
	-\int_{t}^\infty
	\dot{\mu}_1(s) ds,
	& p <-1 .
\end{cases}
	\end{equation*}
Moreover, if $[a_1]_{p,\alpha}<\infty$, then
\begin{equation}
\frac{| \dot\mu_{1}(t_1)-
	\dot\mu_{1}(t_2) |}{|t_1-t_2|^{\alpha}}
\lesssim
t^{p-\alpha} \left(\|a_1\|_{p} + [a_1]_{p,\alpha} \right)
  ~\mbox{ \ for \ }~ t_1,t_2\in[\frac{t}{2},t] .
\end{equation}

\end{prop}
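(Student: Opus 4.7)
My plan is to solve \eqref{mu1-eq} by Banach's fixed point theorem with $\dot\mu_{1}$ as the unknown, working in a weighted $L^\infty$ space for the pointwise bound and then upgrading to a Hölder-weighted space for the regularity statement. Introduce the Banach space
\begin{equation*}
X_p := \Big\{\dot\mu_{1} \in C([t_0,\infty)): \|\dot\mu_{1}\|_{X_p} := \sup_{t\ge t_0} t^{-p}|\dot\mu_{1}(t)| < \infty\Big\},
\end{equation*}
and depending on the sign of $p+1$ reconstruct $\mu_{1}$ from $\dot\mu_{1}$ as an integral from $t_0$ (when $p>-1$) or from $+\infty$ (when $p<-1$); in either case $|\mu_{1}(t)|\lesssim |p+1|^{-1}t^{p+1}\|\dot\mu_{1}\|_{X_p}$. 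Computing the leading coefficient from \eqref{def-mu_0} yields
\begin{equation*}
c(t) := \big[(1-\nu)\ln t - 2\ln\mu_0(t)\big]^{-1} = c_\gamma (\ln t)^{-1}(1+o(1)),
\end{equation*}
where $c_\gamma = (\gamma-1-\nu)^{-1}$ for $1<\gamma<2$ and $c_\gamma = (1-\nu)^{-1}$ for $\gamma\ge 2$, so the hypothesis $2\nu<\min\{\gamma-1,1\}$ is exactly equivalent to $\nu c_\gamma < 1$.

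Define $\mathcal{T}[\dot\mu_{1}]$ as the RHS of \eqref{mu1-eq} and estimate each summand after multiplication by $c(t)$. The elementary identity $\int_{t/2}^{t-t^{1-\nu}}(t-s)^{-1}ds = \nu\ln t + O(1)$ combined with the crude bound $|\dot\mu_{1}(s)|\le s^p\|\dot\mu_{1}\|_{X_p}$ gives the crucial contraction estimate
\begin{equation*}
c(t)\Big|\int_{t/2}^{t-t^{1-\nu}}\frac{\dot\mu_{1}(s)}{t-s}ds\Big|\le (\nu c_\gamma + o(1))\,t^p\|\dot\mu_{1}\|_{X_p}.
\end{equation*}
The local term contributes $c(t)t^{-1}|\mu_{1}(t)|\lesssim (\ln t)^{-1}t^p\|\dot\mu_{1}\|_{X_p}$; the $a_1$ term contributes $c(t)|a_1(t)|\lesssim t^p\|a_1\|_p$; and each term of $a_2, a_3$ is bounded, using the explicit forms given above \eqref{mu1-eq}, by $t^p$ times factors of $(\ln t_0)^{-1}$, $\ln\ln t_0/\ln t_0$, $t_0^{-\epsilon}$ or $R_0^{-\epsilon_0}$, all of which can be made arbitrarily small. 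Hence $\mathcal{T}$ maps the ball $\{\|\dot\mu_{1}\|_{X_p}\le K\|a_1\|_p\}$ into itself for $K$ large enough depending on $\gamma,\nu$, and is a strict contraction with ratio $\nu c_\gamma+o(1)<1$; Banach's theorem then yields the unique fixed point and the pointwise bound $|\dot\mu_{1}(t)|\lesssim t^p\|a_1\|_p$.

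For the Hölder estimate I run the same contraction in the refined space $X_{p,\alpha}$ obtained from $X_p$ by adjoining the seminorm $[\dot\mu_{1}]_{p,\alpha} := \sup_{t,\,t_1,t_2\in[t/2,t]} t^{\alpha-p}|\dot\mu_{1}(t_1)-\dot\mu_{1}(t_2)|/|t_1-t_2|^\alpha$. Smoothness of $c(t)$ (one has $|c'(t)|\lesssim t^{-1}(\ln t)^{-2}$) together with the $[a_1]_{p,\alpha}$ bound gives the Hölder difference of $c(t)a_1(t)$ bounded by $(\|a_1\|_p+[a_1]_{p,\alpha})t^{p-\alpha}$. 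For the non-local integral, the substitution $s=t-u$ and the decomposition
\begin{equation*}
\int_{t_1^{1-\nu}}^{t_1/2}\frac{\dot\mu_{1}(t_1-u)-\dot\mu_{1}(t_2-u)}{u}du
\;+\; (\text{thin endpoint strips near } u=t_i^{1-\nu}, t_i/2)
\end{equation*}
produces at most $(\nu c_\gamma+o(1))[\dot\mu_{1}]_{p,\alpha}t^{p-\alpha}$ from the common-domain piece and $t^{p-1}|t_1-t_2|\lesssim t^{p-\alpha}|t_1-t_2|^\alpha$ from each endpoint strip. The local term and the $a_2,a_3$ contributions are Hölder with small multiplicative constants. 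Thus the contraction rate in $X_{p,\alpha}$ remains $\nu c_\gamma+o(1)<1$ and the fixed point automatically satisfies the claimed Hölder estimate.

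The main obstacle is preserving a strict contraction rate uniformly across all three regimes of $\gamma$: the non-local integral generates precisely the factor $\nu\ln t$ and the inverse coefficient generates precisely $c_\gamma(\ln t)^{-1}$, so any lossy estimate would ruin the balance $\nu c_\gamma<1$ that the hypothesis is tailored for. A secondary subtlety is the self-referential nature of the Hölder bound for the non-local integral, which forces $[\cdot]_{p,\alpha}$ to be built into the fixed point space from the outset rather than recovered a posteriori; once this is done, the Hölder contraction proceeds at the same rate as the $L^\infty$ one.
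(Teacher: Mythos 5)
Your argument is correct and follows essentially the same fixed-point scheme as the paper: set up a weighted $L^\infty$ space for $\dot\mu_1$, observe that the coefficient $\left[(1-\nu)\ln t - 2\ln\mu_0(t)\right]^{-1}$ equals $c_\gamma(\ln t)^{-1}(1+o(1))$ with $c_\gamma = (\min\{\gamma-1,1\}-\nu)^{-1}$ (the paper calls this constant $C_{\gamma,\nu}$), extract the rate $\nu c_\gamma + o(1)$ from the truncated non-local integral, and dump the contributions of $t^{-1}\mu_1$, $a_1$, $a_2$, $a_3$ into a tunable margin controlled by $(\ln t_0)^{-1}$, $\ln\ln t_0/\ln t_0$, $R_0^{-\epsilon_0}$, so that $2\nu<\min\{\gamma-1,1\}\Longleftrightarrow\nu c_\gamma<1$ gives a strict contraction. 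One small caveat worth flagging: writing that the identity $\int_{t/2}^{t-t^{1-\nu}}(t-s)^{-1}\,ds = \nu\ln t + O(1)$ \emph{combined with the crude bound} $|\dot\mu_1(s)|\le s^p\|\dot\mu_1\|_{X_p}$ yields the $\nu c_\gamma$ rate glosses over a necessary step: naively bounding $s^p\le(t/2)^p=2^{|p|}t^p$ on $[t/2,t]$ (recall $p<0$) would inflate the rate by a factor $2^{|p|}$, which could destroy the contraction. One must instead evaluate
\begin{equation*}
\int_{t/2}^{t-t^{1-\nu}}\frac{s^p}{t-s}\,ds = t^p\int_{1/2}^{1-t^{-\nu}}\frac{x^p}{1-x}\,dx
= t^p\Big(\int_{1/2}^{1-t^{-\nu}}\frac{x^p-1}{1-x}\,dx + \nu\ln t - \ln 2\Big) = t^p\big(\nu\ln t + O(1)\big),
\end{equation*}
exploiting that the logarithmic divergence is localized at $x\to 1$ where $x^p\to 1$; this is precisely the computation the paper carries out. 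Finally, the Hölder estimate you handle by adjoining $[\cdot]_{p,\alpha}$ to the fixed-point space, using a common-domain/endpoint-strip split of the non-local integral after the substitution $s=t-u$, is sound and in fact more than the paper's written proof supplies — the paper stops at the pointwise contraction and leaves the Hölder claim of the proposition unaddressed in the displayed argument.
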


\begin{proof}
Notice that
\begin{equation*}
\left[(1-\nu)\ln t-2\ln \mu_0(t)\right]^{-1}
= C_{\gamma,\nu} (\ln t)^{-1}
\left( 1+ O(\frac{\ln\ln t}{\ln t})\right) ,
\quad
C_{\gamma,\nu}:= \left( \min\left\{\gamma-1,1\right\}-\nu \right)^{-1} ,
\end{equation*}
\begin{equation*}
		\left[(1-\nu)\ln t-2\ln \mu_0(t)\right]^{-1} |a_1 [\mu](t)|
		\le
		C_{\gamma,\nu}
		\left( 1+ O(\frac{\ln\ln t}{\ln t})\right)
		t^{p}  \|a_1\|_{p} .
\end{equation*}
From the above estimate, we introduce the norm
\begin{equation}\label{norm-mu1}
	\| \mu_{1} \|_{*}:= \sup\limits_{t\ge t_0/4}
\left( C_{\gamma,\nu}  t^{p} \right)^{-1} |\dot\mu_{1}(t)|
\end{equation}
and $\dot{\mu}_1$ will be solved in the space
\begin{equation}\label{mu1-topo}
	B_{\mu_1} := \{ \mu_1 \in C^{1}(t_0/4,\infty) \ : \ \| \mu_{1} \|_{*} \le C_{\mu_1} \|a_1\|_{p}  \}
\end{equation}
for a large constant $C_{\mu_1} \ge 2$ to be determined later.
Denote
\begin{equation*}
	I[\dot{\mu}_1]:=
	\begin{cases}
		\int_{t_0}^{t} \dot{\mu}_1(s) ds,
		&
		p\ge -1
		\\
		-\int_{t}^\infty
		\dot{\mu}_1(s) ds,
		& p <-1 .
	\end{cases}
\end{equation*}
In order to solve \eqref{mu1-eq}, it suffices to consider the following fixed point problem about $\dot{\mu}_1$.
\begin{equation}
\begin{aligned}
\mathcal{S}[\dot{\mu}_1](t)
=  \ & \chi(t)
\left[(1-\nu)\ln t-2\ln \mu_0(t)\right]^{-1}
\Big(
-
t^{-1} I[\dot{\mu}_1]
-
\int_{t/2}^{t-t^{1-\nu}}\frac{\dot\mu_1(s)}{t-s}ds
\\
&
+
a_1[\bar{\mu}_0+I[\dot{\mu}_1] ](t)+
a_2\left[I[\dot{\mu}_1],\dot{\mu}_1 \right](t)
+
a_3\left[I[\dot{\mu}_1],\dot{\mu}_1 \right](t)
\Big) ~\mbox{ \ for \ }~ t\ge \frac{t_0}{4},
\end{aligned}
\end{equation}
where $\chi(t)$ is a smooth cut-off function such that $\chi(t)= 0$ for $t< \frac{3}{4} t_0$ and $\chi(t) = 1$ for $t\ge t_0$.
For any $\dot{\mu}_1 \in B_{\mu_1}$, since $p\ne -1$, we have
\begin{equation*}
	|  I[\dot{\mu}_1] |
	\le
	C_{\gamma,\nu} \| \mu_{1} \|_{*}
	|p+1|^{-1} t^{p+1},
	\quad
|t^{-1} I[\dot{\mu}_1] |
\le
C_{\gamma,\nu} \| \mu_{1} \|_{*}
|p+1|^{-1} t^{p} .
\end{equation*}
\begin{equation*}
	\begin{aligned}
		&
		\Big| \int_{t/2}^{t-t^{1-\nu}}\frac{\dot\mu_1(s)}{t-s}ds  \Big|
		\le
		C_{\gamma,\nu} \| \mu_{1} \|_{*}
		\int_{t/2}^{t-t^{1-\nu}}\frac{s^p}{t-s}ds
		=
		C_{\gamma,\nu} \| \mu_{1} \|_{*}
		t^p
		\int_{1/2}^{1-t^{-\nu}} \frac{x^p}{1-x} dx
		\\
		= \ &
		C_{\gamma,\nu} \| \mu_{1} \|_{*}
		t^p
		\left(
		\int_{1/2}^{1-t^{-\nu}} \frac{x^p -1}{1-x} dx
		+\nu \ln t -\ln 2
		\right)
		=
		C_{\gamma,\nu} \| \mu_{1} \|_{*}
		t^p
		\nu \ln t
		\left( 1+ O( (\ln t)^{-1} )\right) .
	\end{aligned}
\end{equation*}
For the terms in $a_2[\mu_1 ,\dot{\mu}_1 ]$, we have
\begin{equation*}
\left| | I[\dot{\mu}_1] | t^{-2} \mu_0^2 \ln R_0  \right|
\lesssim
C_{\gamma,\nu} \| \mu_{1} \|_{*} t^{-\epsilon_1} t^{p}
\end{equation*}
for an $\epsilon_1>0$ sufficiently small due to $\gamma>1$.
\begin{equation*}
\begin{aligned}
|\dot{\mu}_0| \sup\limits_{t_{1}\in[t/2,t]}\Big(\frac{|I[\dot{\mu}_1](t_1)|}{ \mu_0(t)} + \frac{|\dot\mu_{1}(t_1)|}{|\dot{\mu}_0(t)|}  \Big)
\lesssim
C_{\gamma,\nu} \| \mu_{1} \|_{*} t^{p}  .
\end{aligned}
\end{equation*}
\begin{equation*}
|\dot\mu_0  | \ln t
\sup\limits_{t_{1}\in[t/2,t]} \Big(\frac{|  I[\dot{\mu}_1] (t_1)|}{\mu_0(t)}
+ \frac{|\dot\mu_{1}(t_1)|}{|\dot\mu_0(t)|}\Big)^2
\lesssim
C_{\gamma,\nu} \| \mu_{1} \|_{*} t^{-\epsilon_1} t^{p}
\end{equation*}
for an $\epsilon_1>0$ small due to the choice $p<-\frac{\gamma}{2}$ when $1<\gamma<2$ and $p<-1$ when $\gamma\ge 2$.
\begin{equation*}
\begin{aligned}
&
t^{-2} \int_{t_0/2}^{t}
\Big( s^{-1} |I[\dot{\mu}_1](s)| \mu_0^{2}(s) + s |\dot\mu_0(s)| \Big(\frac{|I[\dot{\mu}_1](s)|}{ \mu_0(s) } + \frac{|\dot\mu_{1}(s)|}{|\dot\mu_0(s)|}  \Big) \Big)  ds
\\
\lesssim \ &
t^{-2} \int_{t_0/2}^{t}
  \left( |I[\dot{\mu}_1](s)| + s |\dot\mu_{1}(s)| \right)    ds
\lesssim
	C_{\gamma,\nu} \| \mu_{1} \|_{*} t^{p}
\end{aligned}
\end{equation*}
since $p>-2$.
\begin{equation*}
\begin{aligned}
&
\mu^{-1} |  I[\dot{\mu}_1] |
\left( 1+ O(\mu^{-1}\mu_1) \right)
\Big[
O\left( \mu_0  t^{-2}\mu_0^2 \ln R_0 + |\dot{\mu}_0| \right)
+ O\left( \mu_0  t^{-\frac{1}{2}} v_{\gamma}(t) \right)
+
O\left( \ln \ln t |\dot{\mu}_0| \right)
\Big]
\\
\lesssim \ &
C_{\gamma,\nu} \| \mu_{1} \|_{*}
t^{p} \ln\ln t   .
\end{aligned}
\end{equation*}

For $	a_3\left[\mu_1,\dot{\mu}_1 \right]$, we have
\begin{equation*}
\int_{t/2}^{t-\mu_0^2(t)}\frac{|\dot\mu_1(s)|}{t-s}ds
\le
C_{\gamma,\nu} \| \mu_{1} \|_{*}
\int_{t/2}^{t-\mu_0^2(t)}\frac{s^p}{t-s}ds
\lesssim
C_{\gamma,\nu} \| \mu_{1} \|_{*}  t^p\ln t .
\end{equation*}

\begin{equation}
	\begin{aligned}
		|\mathcal{S}[\dot{\mu}_1](t) |
		\le  \ & \chi(t)
		C_{\gamma,\nu} (\ln t)^{-1}
		\left( 1+ O(\frac{\ln\ln t}{\ln t})\right)
		\Big\{
		\left(1+O(R_0^{-\epsilon_1})\right)
		\Big[
		C_{\gamma,\nu} \| \mu_{1} \|_{*}
		|p+1|^{-1} t^{p}
		\\
		& +
		C_{\gamma,\nu} \| \mu_{1} \|_{*}
		t^p
		\nu \ln t
		\left( 1+ O( (\ln t)^{-1} )\right)
		\\
		&
		+
		t^p \ln t \|a_1\|_{p } +
		C_2 C_{\gamma,\nu} \| \mu_{1} \|_{*}
		t^{p} \ln\ln t
		\Big]
		+
		C_3 R_0^{-\epsilon_1} C_{\gamma,\nu} \| \mu_{1} \|_{*}  t^p\ln t
		\Big\} ~\mbox{ \ for \ }~ t\ge \frac{t_0}{4} .
	\end{aligned}
\end{equation}
With above estimates, we then proceed as follows. First, we take
$\nu C_{\gamma,\nu}  <1$ and choose $C_{\mu_1} $ sufficiently large such that $\nu C_{\gamma,\nu} C_{\mu_1} + 1<C_{\mu_1}$. Then we take $R_0$ large enough. Finally, we take $t_0$ sufficiently large. Choosing the above parameters, we have $\mathcal{S}[\dot{\mu}_1](t)  \in B_{\mu_1}$.

For the contraction property, most terms can be verified similarly. Let us focus on the continuity of $a_1[\mu_1](t)$ about $\mu_1$. For any $\mu_{1a}, \mu_{1b}\in B_{\mu_1}$, we have
\begin{equation*}
	\left|
a_1[\bar{\mu}_0+I[\dot{\mu}_{1a}]](t)
-
a_1[\bar{\mu}_0+I[\dot{\mu}_{1b}]](t)
\right|
\lesssim t^{-1} \left| I[\dot{\mu}_{1a}] - I[\dot{\mu}_{1b}] \right|
\|a_1\|_{p}
\lesssim
C_{\gamma,\nu} \|\mu_{1a}-\mu_{1b}\|_{*}
|p+1|^{-1} t^{p}
\|a_1\|_{p}
\end{equation*}
and $\left[(1-\nu)\ln t-2\ln \mu_0(t)\right]^{-1}$ provides small quantity when $t_0$ is large.

\end{proof}

\medskip

\section{Weighted topologies and solving the full problem}

\medskip

Let us first fix the inner solution $\phi$ to the inner problem, and the next order of scaling parameter $\mu_1$ in the spaces with the following norms
\begin{equation}\label{norm-phi}
	\| \phi \|_{i,\kappa,a} :=
\sup\limits_{(\rho,\tau)\in \DD_{4R}}
\tau^{\kappa}
\langle \rho\rangle^{a}
\left( \langle \rho\rangle |\pp_\rho\phi(\rho,t(\tau) )| + |\phi(\rho,t(\tau) )| \right)
\end{equation}
for some positive constants $a\in(0,1)$, $\kappa$  to be determined later.

 For $ \mu_1(t)\in C^{1}(\frac{t_0}{4},\infty)$, $\mu_{1}(t) \rightarrow 0$ as $t\rightarrow\infty$, denote
\begin{equation}\label{norm-mu}
\| \mu_{1} \|_{*1}:= \sup\limits_{t\ge t_0/4}
\left[  \vartheta(t) \right]^{-1} |\dot\mu_{1}|
\end{equation}
with the weighted function
\begin{equation}
\vartheta(t):= \tau^{-\kappa}(t) \mu_0^{-1}(t)R^{-1-a}(t).
\end{equation}
Here,  in order to restrict the inner problem in the self-similar region, it is reasonable to assume that
$$
\mu_0 R\ll \sqrt t.
$$
Let us write
\begin{equation}
R(t)= t^{\omega},
\mbox{ \ where  \ }
0<\omega<\begin{cases}
\frac{\gamma-1}{2},\quad& 1<\gamma<2\\
\frac12,\quad& \gamma\geq 2 .
\end{cases}
\end{equation}

Set $\Psi = r\psi(r,t)$. In order to find a solution $\Psi$ for the outer problem \eqref{out-eq}, it is equivalent to find a fixed point about $\psi$ for the following problem:
\begin{equation*}
	\psi(x,t)=T_4 \bullet \left[ r^{-1} \mathcal{G}[r\psi,\phi,\mu]\right] (x,t,t_0) .
\end{equation*}

We define
\begin{equation}\label{norm-out}
	\|\psi\|_{{\rm out}}= \sup_{r\in (0,\infty
	),t \ge t_0} \left[\left( w_{o}(r,t) \right)^{-1}|\psi(r,t)| \right],
\end{equation}
where
\begin{equation*}
w_{o}(r,t):= \vartheta(t)
\left(\1_{\{ r\le t^{\frac{1}{2}} \}} + t r^{-2}  \1_{\{ r> t^{\frac{1}{2}} \}} \right)
\end{equation*}
Above fixed point problem will be solved in Appendix \ref{Sec-outer}. Once we have pointwise estimate, gradient and H\"older estimates can be obtained by scaling argument. In fact, we show in Appendix \ref{Sec-outer} that
\begin{align*}
|\psi|\lesssim w_{o}(r,t),
\quad
\sup_{t_1,t_2\in (t-\frac{\la^2(t)}{4},t)}\frac{|\psi(r,t_1)-\psi(r,t_2)|}{|t_1-t_2|^{\alpha}}\lesssim
\vartheta(t)
 \left[\la^{-2\alpha}+\la^{2-2\alpha}(\mu_0 R)^{-2}
 \right]
\end{align*}
where $0<\alpha<1$ and $0<\la(t)\leq \sqrt t$. Later we will choose $\la(t)=\sqrt t$.

\medskip

Now by using the H\"older property of $\psi$, we control the remaining term $
\mu\mathcal{E}_{\nu}[\mu_{1}]
$
that we put in the non-orthogonal part of the inner problem. Similar to the process in \cite[Section 4.2]{infi4D}, we have
\begin{equation}
 [\dot\mu_{1}]_{  C^{\alpha}(\frac{3t}{4},t)  } \lesssim [t^{-\alpha}+t^{1-\alpha}(\mu R)^{-2}] \vartheta(t)
\end{equation}
by taking $\la(t)=\sqrt t$ in \eqref{norm-out}. Then we have
\begin{equation}
\begin{aligned}
|\mu\mathcal{E}_{\nu}[\mu_{1}]|\lesssim&~ \mu_0 [\dot\mu_{1}]_{  C^{\alpha}(\frac{3t}{4},t)  } \max\{\mu_0^{2\alpha},t^{(1-\nu)\alpha}\}\\
\lesssim&~\mu_0 [t^{-\alpha}+t^{1-\alpha}(\mu_0 R)^{-2}] \mu^{a}\tau^{-\kappa}(\mu_0 R)^{-1-a}\max\{\mu_0^{2\alpha},t^{(1-\nu)\alpha}\}
\end{aligned}
\end{equation}
By Proposition \ref{linear-theory} and Proposition \ref{R0-linear}, we need
$$
R^2\ln R\mu_0 [t^{-\alpha}+t^{1-\alpha}(\mu_0 R)^{-2}] \mu_0^{a}\tau^{-\kappa}(\mu_0 R)^{-1-a}\max\{\mu_0^{2\alpha},t^{(1-\nu)\alpha}\}
\ll v(t)\ll \tau^{-\kappa} R_0^{\ell-6}(\ln R_0)^{-1},
$$
i.e.,
$$
R^{1-a}\ln R t^{1-\alpha}(\mu_0 R)^{-2}\max\{\mu_0^{2\alpha},t^{(1-\nu)\alpha}\}
\ll 1,
$$
where we have used
$$
\mu_0 R\ll \sqrt t.
$$
We then require
\begin{equation}\label{eqn-912912912}
\begin{cases}
R^{-1-a}\ln R \mu_0^{2\alpha-2} t^{1-\alpha}\ll 1,\\
R^{-1-a}\ln R \mu_0^{-2} t^{1-\alpha+(1-\nu)\alpha}\ll 1.\\
\end{cases}
\end{equation}
 Recall the definition of $\mu_0$ in \eqref{def-mu_0}.
 Then
we have
\begin{equation}\label{restriction1}
\begin{cases}
\begin{cases}
-\omega(1+a)+\gamma-1-\nu\alpha<0\\
0<\omega<\frac{\gamma-1}{2}\\
0<\nu<\frac{\gamma-1}{2}\\
\end{cases}
,~&~\mbox{ if }~ 1<\gamma<2\\
\begin{cases}
-\omega(1+a)+1-\nu\alpha<0\\
0<\omega<\frac12\\
0<\nu<\frac12\\
\end{cases}
,~&~\mbox{ if }~ \gamma\geq 2,\\
\begin{cases}
0<\alpha<1\\
0<a<\ell-2\\
\end{cases}, &\quad \gamma>1,
\end{cases}
\end{equation}
where the last restriction is from the need in the inner problem, see Proposition \ref{R0-linear}.

\medskip

\begin{proof}[Proof of Theorem \ref{thm}]
After the weighted spaces are fixed for $(\psi,\phi,\mu_1)$, the fixed point argument can be then carried out by using the linear theories, where the linear theory for the inner problem is proved in Appendix \ref{sec-inner}, the solvability of $\mu_1$ is showed in Section \ref{Sec-linearnonlocal} by controlling a non-local remainder in the inner problem, and the linear theory for the outer problem corresponds essentially to convolutions in $\R^4$ (cf. \cite[Appendix A]{infi4D}). The  inner and outer problems are analyzed in Appendix \ref{Sec-RHS}, and the contraction mapping theorem can be applied if one can choose constants satisfying the constraints \eqref{restriction1}, \eqref{out-restrict} and  \eqref{restriction2}. For $\gamma\geq 2$, this is straightforward, and for $1<\gamma<2$, one has valid choices in the entire range with the aid of Mathematica. The proof is thus complete.
\end{proof}

\bigskip

\medskip

\appendix

\section{Analyzing the gluing system}\label{Sec-RHS}

\medskip

To estimate the errors appearing in the RHS of inner and outer problems, we recall that we measure
\begin{itemize}
\item
$\mu_1$ with the norm \eqref{norm-mu}.
\item the RHS for the inner problem with the $\|\cdot\|_{v,\ell}$-norm where
$$
v(t)=\tau^{-\kappa} R_0^{-5}, \quad 1<\ell<3,
$$
and $R_0>0$ is a large constant.
\item the inner solution $\phi$ with the $\|\cdot\|_{i,\kappa,a}$-norm defined in \eqref{norm-phi}.
\item the outer solution $\psi$ with the $\|\cdot\|_{{\rm out}}$-norm defined in \eqref{norm-out}.
\end{itemize}

\medskip

We first give some estimates for the terms in $E[v_1]$ defined in \eqref{def-E[v_1]}. Notice that the support of $E_\eta$ (defined in \eqref{def-E_eta}) in outside the inner region.

\noindent $\bullet$
\begin{equation*}
\begin{aligned}
\left| \eta(4z)\pp_t \Phi_e\right|
\lesssim
\1_{\{ r\le \frac{\sqrt{t}}{2}\}}
\mu_0
\begin{cases}
	t^{-1-\frac{\gamma}{2}} \ln t
	\langle \bar\rho  \rangle^{-1}
	\ln (\bar\rho+2)
	,
	&
	1<\gamma<2
	\\
	t^{-2} \ln t \langle \bar\rho  \rangle^{-1}
	\ln (\bar\rho+2)  ,
	& \gamma=2
	\\
	t^{-2}  \langle \bar\rho  \rangle^{-1}
	\ln (\bar\rho+2) ,
	&
	\gamma>2 ,
\end{cases}
\end{aligned}
\end{equation*}

\noindent $\bullet$
\begin{align*}
&~\left| \left( \eta(4z)  -
\eta(z) \right) \bar{\mu}_0^{-1}
\frac{-8 \bar\rho }{(\bar{\rho}^2+1)^2}
\left( \varphi[\bar{\mu}_0](r,t)
+
\psi_*(r,t)
\right) \right|
\\
\lesssim  &~
\1_{\{ \frac{t^{\frac{1}{2}}}{4} \le r \le 2t^{\frac{1}{2}}\}}
\mu_0^2 t^{-\frac{3}{2}}
\begin{cases}
t^{-\frac{\gamma}{2}} ,
& 1<\gamma<2
\\
t^{-1},
& \gamma=2
\\
(t \ln t)^{-1} ,
& \gamma >2 ,
\end{cases}
\end{align*}
and
\begin{equation*}
\begin{aligned}
&~\left|\Phi_e \eta'(4z)\frac{2r}{t^{\frac{3}{2}}}+\frac{16}{t}\eta''(4z)\Phi_e+\frac{8}{\sqrt t}\eta'(4z)\pp_r \Phi_e
+  \frac1r \frac4{\sqrt t}\eta'(4z)\Phi_e\right|\\
\lesssim &~ \1_{\{\frac{t^{\frac 12}}{4} \le r\le \frac{t^{\frac12}}{2} \}}\begin{cases}
\mu_0^2 t^{-\frac{3+\gamma}{2}}\ln t ,\quad & 1<\gamma<2\\
\mu_0^2 t^{-\frac52}\ln t ,\quad & \gamma= 2\\
\mu_0^2 t^{-\frac52},\quad & \gamma> 2\\
\end{cases}
\end{aligned}
\end{equation*}
by using \eqref{Phie-est1}.

\noindent $\bullet$
\begin{equation*}
\begin{aligned}
	&
	\left|
\eta(z)
\left(
\mu^{-1}  \frac{8 \rho }{ \left( \rho^{2}+1\right)^2}
-
\bar{\mu}_0^{-1}
\frac{8 \bar\rho }{(\bar{\rho}^2+1)^2}
\right)
\left( \varphi[\bar{\mu}_0](r,t)
+
\psi_*(r,t)
\right)
\right|
\\
\lesssim \ &
\eta(z)
|\mu_1| \mu_0^{-2}
\langle \rho \rangle^{-3}
\begin{cases}
	t^{-\frac{\gamma}{2}} ,
	& 1<\gamma<2
	\\
	t^{-1},
	& \gamma=2
	\\
	(t \ln t)^{-1} ,
	& \gamma >2 ,
\end{cases}
\end{aligned}
\end{equation*}

\noindent $\bullet$
\begin{equation*}
	\begin{aligned}
		&
		\left|\eta(z) \mu^{-1}  \frac{8 \rho }{ \left( \rho^{2}+1\right)^2}
		\left( \varphi[\mu](r,t) -
		\varphi[\bar{\mu}_0](r,t)
		\right)\right|
		\\
		\lesssim \ &
		\eta(z) \mu^{-1}  \frac{ \rho }{ \left( \rho^{2}+1\right)^2}
		\Bigg[
		\left( O ( |\mu_1|  t^{-1} )
		+
		\tilde{g}[\mu,\mu_1] \right) \1_{\{ r  \le 2t^{\frac 12} \}}
		\\
		&
		+
		\sup\limits_{t_1\in[t/2,t]} \Big(\frac{|\mu_1(t_1)|}{ \mu(t) } + \frac{|\dot\mu_{1}(t_1)|}{|\dot\mu(t)|}  \Big)
		\begin{cases}
			|\dot \mu| \langle \ln (\mu^{-1} t^{\frac 12} ) \rangle
			&
			\mbox{ \ if \ } r\le \mu
			\\
			|\dot \mu| \langle\ln(r^{-1} t^{\frac 12}) \rangle
			&
			\mbox{ \ if \ }  \mu < r\le t^{\frac 12}
			\\
			t |\dot \mu| r^{-2} e^{-\frac{r^2}{16 t}}
			&
			\mbox{ \ if \ }   r > t^{\frac 12}
		\end{cases}
		\Bigg] .
	\end{aligned}
\end{equation*}
$\bullet$
\begin{equation*}
\left|\eta(4z) \bar{\mu}_0^{-1}
\mathcal{M}[\bar{\mu}_0]
\frac{\eta(\bar{\rho}) \mathcal{Z}(\bar{\rho})  }{\int_{0}^3 \eta(x) \mathcal{Z}^2(x) xdx } \right|
\lesssim
\mu_0^{-1} t^{-2} \eta(\bar{\rho}) \bar{\rho} .
\end{equation*}
$\bullet$
\begin{equation*}
	\left|
\eta(4z) \bar{\mu}_0^{-2}
\left(
-
\frac{\cos(2Q_\mu)-\cos(2Q_{\bar{\mu}_0})}{\bar{\rho}^2}
\Phi_e
\right) \right|
\lesssim
\eta(4z) \mu_1 \mu_0^{-3} |\Phi_e|
\langle \bar{\rho}\rangle^{-4} .
\end{equation*}

For the remaining error $E_e$, we have the following

\noindent $\bullet$ If $z<1$, then
\begin{equation}
\begin{aligned}
E_e=&~
- \frac{1}{2r^2}
\left[ \sin[2( Q_\mu+\Phi_1+\Phi_2+\Psi_{*} +\Phi_e )]
-
\sin(2Q_\mu)
-
\cos(2Q_\mu) 2\left( \Phi_1+\Phi_2+\Psi_{*} + \Phi_e \right)
\right]
\\
&~
+
\frac{1}{2r^2}
\Big[
- \sin[2(Q_\mu+\Phi_1+\Phi_2+\Psi_{*} + \Phi_e )]
+
 \sin[2( Q_\mu+\Phi_1+\Phi_2+\Psi_{*} +\Phi_e )]
\Big] \\
=&~\frac{1}{2r^2}
\Big(
 \sin[2( Q_\mu+\Phi_1+\Phi_2+\Psi_{*} +\Phi_e )]-\sin(2Q_\mu)-
2\cos(2Q_\mu) \left( \Phi_1+\Phi_2+\Psi_{*} + \Phi_e \right)
\Big)
\end{aligned}
\end{equation}
Since
$$
\Big|\Phi_1+\Phi_2+\Psi_{*} + \Phi_e\Big|\ll Q_\mu,
$$
by Taylor expansion, we have
\begin{align*}
|E_e|\lesssim&~\frac1{r^2}\Big|\sin(2Q_\mu)(\Phi_1+\Phi_2+\Psi_{*} + \Phi_e)^2\Big|\\
\lesssim&~\langle\rho\rangle^{-1}(\varphi+\psi_*)^2+\mu^{-2}\rho^{-1}\langle\rho\rangle^{-2}\Phi_e^2
\end{align*}

\noindent $\bullet$ If $1<z<2$, then
\begin{equation}
\begin{aligned}
E_e=&~
-
\eta(z) \frac{1}{2r^2}
\left[ \sin[2( Q_\mu+\Phi_1+\Phi_2+\Psi_{*} +  \Phi_e )]
-
\sin(2Q_\mu)
-
\cos(2Q_\mu) 2\left( \Phi_1+\Phi_2+\Psi_{*} + \Phi_e \right)
\right]
\\
&~
+
\frac{1}{2r^2}
\Big[
- \sin[2(\eta(z) Q_\mu+\Phi_1+\Phi_2+\Psi_{*} +  \Phi_e )]
+
\eta(z) \sin[2( Q_\mu+\Phi_1+\Phi_2+\Psi_{*} +  \Phi_e )]
\\
&~ + 2\left( 1-\eta(z)  \right) \left( \Phi_1+\Phi_2+\Psi_{*} \right)
\Big].
	\end{aligned}
\end{equation}
Since now $|Q_\mu|\ll 1$, we have
\begin{equation}
\begin{aligned}
|E_e|\lesssim \eta(z) \frac{1}{r^2} \Big|\sin(2Q_\mu)(\Phi_1+\Phi_2+\Psi_{*} + \Phi_e)^2\Big|+r^{-2}(|Q_\mu+\Phi_e|)\1_{\{r\sim \sqrt t\}}.
\end{aligned}
\end{equation}

\noindent $\bullet$ If $z>2$, then
\begin{equation}
\begin{aligned}
E_e=&~
\frac{1}{2r^2}
\Big[
- \sin[2(\Phi_1+\Phi_2+\Psi_{*} + \eta(4z) \Phi_e )]
 + 2\left( \Phi_1+\Phi_2+\Psi_{*} \right)
\Big],
	\end{aligned}
\end{equation}
and thus
\begin{align*}
|E_e|\lesssim t^{-1}|\Phi_e|.
\end{align*}

\noindent $\bullet$ We will need to take into account the cancellation in \eqref{varphi+psi} for the estimate of $\varphi[\mu] + \psi_*$. Since
\begin{equation}
\begin{aligned}
&~\left| \mu_1  t^{-1}
		+ \int_{t/2}^{t-\mu_{0}^2 } \frac{\dot\mu_1(s)}{t-s} d s
		\right|\\
\lesssim &~\vartheta+\dot\mu_1\ln t+[\dot\mu_1]_{C^\alpha}\max\{\mu_0^{2\alpha},t^{(1-\nu)\alpha}\}\\
\lesssim&~\vartheta\ln t+t^{1-\alpha}(\mu_0 R)^{-2}\vartheta\max\{\mu_0^{2\alpha},t^{(1-\nu)\alpha}\}\\
\lesssim&~\vartheta\ln t+t^{1-\alpha\nu}(\mu_0 R)^{-2}\vartheta\end{aligned}
\end{equation}
we have
\begin{equation*}
	\begin{aligned}
			\varphi[\mu] + \psi_*
		= ~&
		\bigg[ \vartheta\ln t+t^{1-\alpha\nu}(\mu_0 R)^{-2}\vartheta	+ O( \mu_0  t^{-2} r^2 )
		+
		|\dot{\mu}_0|
		\min\{ \langle \rho \rangle,\ln t \} \big)
		\bigg]
		\1_{\{ r\le 2t^{\frac 12}  \}}
		\\
		& +
		O\Big(  \mu_0  r^{-2} e^{-\frac{r^2}{16 t}}
		+
		|\dot{\mu}_0|  t^3 r^{-6}
		\Big)
		\1_{\{ r > 2 t^{\frac 12} \}}
		+ O( \mu_0 \rho  t^{-\frac{1}{2}} v_{\gamma}(t) )
		+
		O\left( \ln \ln t |\dot{\mu}_0| \right) .
	\end{aligned}
\end{equation*}

\medskip

\subsection{Estimates for the outer problem}\label{Sec-outer}

Recall the norm of the outer problem defined in \eqref{norm-out}. We will solve \eqref{out-eq} in the space
\begin{equation*}
B_{\rm{out}} : = \left\{ f: ~ \|f\|_{{\rm out}} \le C_{o} \right\}
\end{equation*}
for a large costant $C_{o}$.
For any $\psi \in B_{\rm{out}} $, we will estimate the right hand side of the outer problem, $\mathcal G$ defined in \eqref{def-G}.

\noindent $\bullet$ By above estimates, we have
\begin{equation}
\begin{aligned}
&~\left|(1-\eta_R)E\left[v_1 \right]\right|
\lesssim
\1_{\{  2\mu_0 R\leq r\leq 2\sqrt{t} \}}
\mu_0^2
\begin{cases}
	t^{-1-\frac{\gamma}{2}} (\ln t)^2
	\langle r \rangle^{-1}
	,
	&
	1<\gamma<2
	\\
	t^{-2} (\ln t)^2 \langle r  \rangle^{-1}
 ,
	& \gamma=2
	\\
	t^{-2} \ln t \langle r  \rangle^{-1}
	 ,
	&
	\gamma>2
\end{cases}
\quad
+\1_{\{2 \mu_0 R\leq r\leq 2\sqrt t \}}\mu_0^{2}\ln t \vartheta(t) \langle r\rangle^{-3} \| \mu_{1} \|_{*}
\\
&~+\1_{\{2 \mu_0 R\leq r\leq 2\sqrt t \}} \vartheta(t) \mu_0^{2}\langle r\rangle^{-4}\begin{cases}
		\min\left\{
		\mu_0 t^{1-\frac{\gamma}{2}}
	\langle r  \rangle^{-1}
	\ln t
		,
		t^{1-\frac{\gamma}{2}} (\ln t)^{-1} \ln( \ln t )
		\right\}
		,
		&
		1<\gamma<2
		\\
		\min\left\{  \mu_0
		 \langle r  \rangle^{-1}
		\ln t,
		 (\ln t)^{-2} \ln( \ln t )  \right\} ,
		& \gamma=2
		\\
		\min\left\{
		 \mu_0 \langle r  \rangle^{-1}
		 ,
		 (\ln t)^{-2} \ln( \ln t ) \right\},
		&
		\gamma>2
	\end{cases}  \\
&~+\1_{\{2 \mu_0 R\leq r\leq 8\sqrt t \}}\mu_0^3\langle r\rangle^{-3}
\begin{cases}
		\min\left\{
		t^{-\gamma} \mu_0^2
	\langle r \rangle^{-2}
	(\ln t)^2
		,
		t^{-\gamma} (\ln t)^{-2} (\ln( \ln t ) )^2
		\right\}
		,
		&
		1<\gamma<2
		\\
		\min\left\{
		t^{-2}\mu_0^2 \langle r  \rangle^{-2}
		(\ln t)^2,
		t^{-2} (\ln t)^{-4} (\ln( \ln t ))^2  \right\} ,
		& \gamma=2
		\\
		\min\left\{
		t^{-2} \mu_0^2 \langle r  \rangle^{-2}
		,
		t^{-2} (\ln t)^{-4} (\ln( \ln t ))^2 \right\},
		&
		\gamma>2
	\end{cases}  \\
&~+\1_{\{2 \mu_0 R\leq r\leq 2\sqrt t \}}\mu_0\langle r\rangle^{-1}\Big[ \vartheta(t)^2(\ln t)^2+t^{2-2\alpha\nu}(\mu_0 R)^{-4}\vartheta(t)^2+\mu_0^2 t^{-2}+(\dot\mu_0 \ln t)^2\Big],
\end{aligned}
\end{equation}
and for $r\leq \sqrt t$, we have
\begin{equation}
\begin{aligned}
&
T_4 \bullet
 \Big[r^{-1}(1-\eta_R)E\left[v_1 \right]\Big](x,t,t_0)
\lesssim
\Bigg\{
\mu_0^2
\begin{cases}
	t^{-1-\frac{\gamma}{2}} (\ln t)^3
	,
	&
	1<\gamma<2
	\\
	t^{-2} (\ln t)^3
 ,
	& \gamma=2
	\\
	t^{-2} (\ln t)^2
	 ,
	&
	\gamma>2
\end{cases}
\quad
+
\vartheta(t)  R^{-2}\ln t
\| \mu_{1} \|_{*}
\\
&~+
\vartheta(t) \mu_0^{-1}R^{-4}\begin{cases}
		t^{1-\frac{\gamma}{2}}
	\ln t
		,
		&
		1<\gamma<2
		\\
		\ln t,
		& \gamma=2
		\\
	1
		 ,
		&
		\gamma>2
	\end{cases}
	\quad
	+\mu_0^{-1}R^{-4}
\begin{cases}
		t^{-\gamma} \mu_0^2
	(\ln t)^2
		,
		&
		1<\gamma<2
		\\
		t^{-2}\mu_0^2
		(\ln t)^2,
		& \gamma=2
		\\
		t^{-2} \mu_0^2
	        ,
		&
		\gamma>2
	\end{cases}  \\
&~+\mu_0\ln t\Big[ \vartheta(t)^2(\ln t)^2+t^{2-2\alpha\nu}(\mu_0 R)^{-4} \vartheta(t)^2+\mu_0^2 t^{-2}+(\dot\mu_0 \ln t)^2\Big]
 \Bigg\} \left(\1_{\{ r\le t^{\frac{1}{2}} \}} + t r^{-2}  \1_{\{ r> t^{\frac{1}{2}} \}} \right)
 \\
 \lesssim \ &
 t^{-\epsilon}
  w_{o}(r,t)
.
\end{aligned}
\end{equation}

For the coupling terms, we have
\begin{equation}
	\begin{aligned}
		&~
		\Bigg|\eta''(\frac{\rho}{R}) (\mu R)^{-2}
		\phi
		+\eta'(\frac{\rho}{R}) \mu^{-2} (\rho R)^{-1} \phi
		+ 2 (\mu R)^{-1} \mu^{-1}
		\eta'(\frac{\rho}{R}) \pp_{\rho} \phi
		+
		\eta'(\frac{\rho}{R}) \frac{\rho}{R}
		\frac{(\mu R)'}{\mu R} \phi
		\\
		& ~
		-
		\mu^{-2} \left(\eta(z) -1 \right)
		\frac{  \rho^{4} - 6 \rho^{2} + 1}{ \rho^2\left( \rho^{2} + 1  \right)^2}  \eta_{R} \phi \Bigg|
		\lesssim
		\tau^{-\kappa}\mu_0^{-2}R^{-2-a}\1_{\{\mu R \le r \le 2\mu R \}}
	\end{aligned}
\end{equation}
whose contribution for $\psi$ is given by
\begin{equation*}
\begin{aligned}
&
T_4 \bullet \left[ r^{-1} \tau^{-\kappa}(t) \mu_0^{-2}R^{-2-a}\1_{\{\mu R \le r \le 2\mu R \}}\right]
\lesssim
T_4 \bullet  \left[   \tau^{-\kappa}(t) \mu_0^{-3}R^{-3-a}\1_{\{\mu R \le r \le 2\mu R \}}\right]
\\
\lesssim \ &
t^{-2} e^{-\frac{r^2}{16 t}}
\int_{\frac{t_0}{2}}^{\frac{t}{2}}
\tau^{-\kappa}(s) \mu_0(s) R^{1-a}(s) ds
+
\tau^{-\kappa}(t) \mu_0^{-1}(t)R^{-1-a}(t)
\left(\1_{\{ r\le 2\mu_0 R \}} + (\mu_0 R)^2 r^{-2} e^{-\frac{r^2}{16 t}} \1_{\{ r> 2\mu_0 R \}} \right)
\\
\lesssim \ &
  w_{o}(r,t)
\end{aligned}
\end{equation*}
provided
\begin{equation}\label{out-restrict}
\begin{cases}
2-\frac{\gamma}{2} -\kappa(\gamma-1) + \omega(1-a)>0 , &
1<\gamma<2
\\
1 -\kappa + \omega(1-a)>0 , &
\gamma\ge 2
\end{cases}
\end{equation}

\begin{equation}
\begin{aligned}
	\left|(1-\eta_R) \mu^{-2} \frac{8}{ \left( \rho^{2}+1\right)^2}  \Psi \right|
	\lesssim \1_{\{\mu_0 R/2 \le r\}}\mu_0^2 r^{-3} |\psi|
	\lesssim
	\1_{\{\mu_0 R/2 \le r\}}\mu_0 R^{-1} r^{-2} |\psi|
\end{aligned}
\end{equation}
Then this term  contributes to the outer problem with the form
\begin{equation*}
T_4\bullet  \left[ \1_{\{\mu_0 R/2 \le r\}}\mu_0 R^{-1} r^{-3} |\psi| \right]
\lesssim
T_4\bullet  \left[ \1_{\{\mu_0 R/2 \le r\}}  R^{-2} r^{-2} |\psi| \right]
\lesssim
R^{-2} w_{o}(r,t) .
\end{equation*}

\noindent $\bullet$ Estimate of the nonlinear terms defined by
\begin{equation}
\begin{aligned}
\mathcal N:=&~
\frac{1}{2r^2}
\Big[ \sin\left( 2\left( \eta(z) Q_\mu+\Phi_1+\Phi_2+\Psi_{*} +\eta(4z)\Phi_e \right) \right)
-
\eta(z) \sin\left( 2\left(  Q_\mu+\Phi_1+\Phi_2+\Psi_{*} +\eta(4z)\Phi_e \right) \right)
\\
&~+
\eta(z) \sin\left( 2\left(  Q_\mu+\Phi_1+\Phi_2+\Psi_{*} +\eta(4z)\Phi_e + \Psi  + \eta_{R} \phi \right) \right)
\\
&~
-\sin\left( 2\left( \eta(z) Q_\mu+\Phi_1+\Phi_2+\Psi_{*} +\eta(4z)\Phi_e + \Psi  + \eta_{R} \phi \right) \right) \Big]
\\
&~ +
\frac{\eta(z)}{2r^2}
\Big\{ \sin\left( 2\left(  Q_\mu+\Phi_1+\Phi_2+\Psi_{*} +\eta(4z)\Phi_e \right) \right)
-\sin(2Q_{\mu})
-2\cos(2Q_{\mu}) \left(  \Phi_1+\Phi_2+\Psi_{*} +\eta(4z)\Phi_e \right)
\\
&~
-
\Big[
\sin\left( 2\left(  Q_\mu+\Phi_1+\Phi_2+\Psi_{*} +\eta(4z)\Phi_e + \Psi  + \eta_{R} \phi \right) \right)
-\sin(2Q_{\mu})
\\
&~ -2\cos(2Q_{\mu}) \left( \Phi_1+\Phi_2+\Psi_{*} +\eta(4z)\Phi_e + \Psi  + \eta_{R} \phi \right)
\Big]
\Big\}- \frac{\eta(z)-1}{r^2} \cos(2Q_{\mu}) \Psi \\
=&~
\frac{1}{2r^2}
\Big[ \sin\left( 2v_1\right)
-\sin\left( 2\left( v_1 + \Psi  + \eta_{R} \phi \right) \right)+2\eta(z)\cos(2Q_{\mu}) \left(\eta_{R} \phi \right)
 +2\cos(2Q_{\mu}) \Psi \Big]
\end{aligned}
\end{equation}
\noindent $\bullet$ If $0<z\le 1$, then
\begin{equation*}
	\begin{aligned}
		|\mathcal N|\lesssim&~\frac1{r^2}\sin(2Q_\mu) \left( \Phi_1+\Phi_2+\Psi_{*} +\eta(4z)\Phi_e + \Psi  + \eta_{R} \phi \right)^2\\
		\lesssim&~\1_{\{r\leq \sqrt t\}} r^{-2}\rho\langle \rho\rangle^{-2}\left[r^2\left(\varphi[\mu] + \psi_* \right)^2
		+
r^2 \psi^2
		+\Phi_e^2+(\eta_{R} \phi)^2\right]
	\end{aligned}
\end{equation*}
Therein,
\begin{equation*}
\begin{aligned}
&
\1_{\{r\leq \sqrt t\}}  \rho\langle \rho\rangle^{-2}
 \psi^2
 \lesssim
\tau^{-\kappa}(t) \mu_0^{-1}(t)R^{-1-a}(t)
 \left( \1_{\{ r\le \mu_0 \}}
 +
 \1_{\{ \mu_0 < r\leq 2 \sqrt t\}}  \frac{\mu_0}{r}
 \right)
 \psi \|\psi\|_{{\rm out}}
\\
\lesssim \ &
\tau^{-\kappa}(t) \mu_0^{-1}(t)R^{-1-a}(t)
\left( \1_{\{ r\le 1 \}}
+
\1_{\{ 1 < r\leq 2 \sqrt t\}}  \frac{\mu_0}{r}
\right)
\psi \|\psi\|_{{\rm out}} .
\end{aligned}
\end{equation*}
Then
\begin{equation*}
T_4 \bullet [r^{-1} \1_{\{r\leq \sqrt t\}}  \rho\langle \rho\rangle^{-2}
\psi^2 ] \lesssim t^{-\epsilon}   w_{o}(r,t) \|\psi\|_{{\rm out}}^2 .
\end{equation*}

\begin{equation*}
\1_{\{r\leq \sqrt t\}}  \rho\langle \rho\rangle^{-2}  \left(\varphi[\mu] + \psi_* \right)^2
\lesssim
\1_{\{r\leq 2 \sqrt t\}}   \langle \rho\rangle^{-1}  \left(\varphi[\mu] + \psi_* \right)^2
\end{equation*}
which implies
\begin{equation*}
\begin{aligned}
&
	\left|
T_4 \bullet [ r^{-1} \1_{\{r\leq 2 \sqrt t\}}   \langle \rho\rangle^{-1}  \left(\varphi[\mu] + \psi_* \right)^2 ] \right|
\\
\lesssim \ &
\left(
\tau^{-\kappa}(t) \mu_0^{-1}(t)R^{-1-a}(t) \ln t+t^{1-\alpha\nu}(\mu_0 R)^{-2} \tau^{-\kappa}(t) \mu_0^{-1}(t)R^{-1-a}(t)	+ O( \mu_0  t^{-1} )
+
|\dot{\mu}_0|  \ln t
\right)^2
\\
& \times
\left(\1_{\{ r\le t^{\frac{1}{2}} \}} + t r^{-2}  \1_{\{ r> t^{\frac{1}{2}} \}} \right)
\lesssim
t^{-\epsilon}  w_{o}(r,t)  ,
\end{aligned}
\end{equation*}
\begin{equation*}
\left|T_4 \bullet  \left[r^{-1}\1_{\{r\leq \sqrt t\}} r^{-2}\rho\langle \rho\rangle^{-2}\big[\Phi_e^2+(\eta_{R} \phi)^2\big]\right]\right|
\lesssim
\begin{cases}
		t^{-\gamma} \mu_0^2
	(\ln t)^2
		,
		&
		1<\gamma<2
		\\
		t^{-2}\mu_0^2
		(\ln t)^2,
		& \gamma=2
		\\
		t^{-2} \mu_0^2 ,
		&
		\gamma>2
	\end{cases} \lesssim
t^{-\epsilon}  w_{o}(r,t)  .
\end{equation*}

\noindent $\bullet$ If $1<z\le 2$, then
\begin{equation}
	\begin{aligned}
		|\mathcal N|
		\lesssim&~\1_{\{ t^{\frac{1}{2}} \le r \le 2t^{\frac{1}{2}} \}} r^{-2}\rho^{-2}|\Psi|
		+
		\1_{\{ t^{\frac{1}{2}} \le r \le 2t^{\frac{1}{2}} \}} r^{-2}\rho\langle \rho\rangle^{-2}\Big[r^2(\varphi[\mu] + \psi_*+\psi)^2+\Phi_e^2\Big]
	\end{aligned}
\end{equation}
Therein,
\begin{equation*}
T_4 \bullet \left[ \1_{\{ t^{\frac{1}{2}} \le r \le 2t^{\frac{1}{2}} \}} r^{-3}\rho^{-2}|\Psi| \right]
\lesssim
T_4  \bullet  \left[ \1_{\{ t^{\frac{1}{2}} \le r \le 2t^{\frac{1}{2}} \}} r^{-4} \mu_0^2 |\psi|  \right]
\lesssim
T_4 \bullet  \left[ \1_{\{ t^{\frac{1}{2}} \le r \le 2t^{\frac{1}{2}} \}} t^{-\epsilon} r^{-2}  |\psi|  \right]
\lesssim
t^{-\epsilon}  w_{o}(r,t)
\end{equation*}
for an $\epsilon>0$ sufficiently small since $\mu_0\ll t^{\frac 12 -}$.

\noindent $\bullet$ If $z>2$, then
\begin{equation*}
	\begin{aligned}
		|\mathcal N|
		\lesssim \1_{\{r\geq  2\sqrt t\}} r^{-2}\rho^{-2}|\Psi|
\lesssim
\1_{\{r\geq  2\sqrt t\}} r^{-3} \mu_0^2 |\psi| .
	\end{aligned}
\end{equation*}
Then
\begin{equation*}
T_4 \bullet   \left[  \1_{\{r\geq  2\sqrt t\}} r^{-4} \mu_0^2 |\psi|  \right]
\lesssim
T_4 \bullet  \left[  \1_{\{r\geq  2\sqrt t\}} t^{-\epsilon} r^{-2}  |\psi|  \right]
\lesssim
t^{-\epsilon}  w_{o}(r,t) .
\end{equation*}
From the above estimate, choosing $C_0$ large, then making $R, t$ large enough, we have  $T_4 \bullet \mathcal{G} \in B_{\rm{out}}$. The contraction mapping property can be derived very similarly. Thus we can find the solution of the outer problem in $B_{\rm{out}}$.

\medskip

\subsection{Estimates for the inner problem}

In this section, we estimate
\begin{equation}
\frac{8}{ \left( \rho^{2}+1\right)^2} \Psi
		+
		\mu^2  E\left[v_1 \right]
\end{equation}

From the beginning of this section, we have the following estimates in the inner region $(\rho,t)\in \mathcal D_{2R}$.

\noindent $\bullet$

\begin{equation}
\begin{aligned}
\left|\frac{8}{ \left( \rho^{2}+1\right)^2} \Psi\right|\lesssim &~v_{{\rm out}}(t)(\mu_0 R)^{-1-b}\langle \rho\rangle^{-3}\1_{\{\rho\leq 2R\}}\\
=&~\mu^{a}\tau^{-\kappa}(\mu_0 R)^{-1-b}\langle \rho\rangle^{-3}\1_{\{\rho\leq 2R\}},
\end{aligned}
\end{equation}

\noindent $\bullet$
\begin{equation*}
\begin{aligned}
\left| \eta(4z)\pp_t \Phi_e\right|
\lesssim
\1_{\{ r\le \frac{\sqrt{t}}{2}\}}
\mu_0
\begin{cases}
	t^{-1-\frac{\gamma}{2}} \ln t
	\langle \bar\rho  \rangle^{-1}
	\ln (\bar\rho+2)
	,
	&
	1<\gamma<2
	\\
	t^{-2} \ln t \langle \bar\rho  \rangle^{-1}
	\ln (\bar\rho+2)  ,
	& \gamma=2
	\\
	t^{-2}  \langle \bar\rho  \rangle^{-1}
	\ln (\bar\rho+2) ,
	&
	\gamma>2 ,
\end{cases}
\end{aligned}
\end{equation*}

\noindent $\bullet$
\begin{equation*}
\begin{aligned}
	&
	\left|
\eta(z)
\left(
\mu^{-1}  \frac{8 \rho }{ \left( \rho^{2}+1\right)^2}
-
\bar{\mu}_0^{-1}
\frac{8 \bar\rho }{(\bar{\rho}^2+1)^2}
\right)
\left( \varphi[\bar{\mu}_0](r,t)
+
\psi_*(r,t)
\right)
\right|
\\
\lesssim \ &
\eta(z)
t\vartheta \mu_0^{-2}
\langle \rho \rangle^{-3}
\begin{cases}
	t^{-\frac{\gamma}{2}} ,
	& 1<\gamma<2
	\\
	t^{-1},
	& \gamma=2
	\\
	(t \ln t)^{-1} ,
	& \gamma >2 .
\end{cases}
\end{aligned}
\end{equation*}
Recall that $\tilde{g}[\mu,\mu_1]$ is defined in Proposition \ref{varphi-coro}. By
\begin{equation}
\begin{aligned}
\frac{|\mu_1(t_1)|}{ \mu(t) } + \frac{|\dot\mu_{1}(t_1)|}{|\dot\mu(t)|}  \lesssim &~\frac{t\vartheta} {\mu_0}+\frac{\vartheta}{|\dot\mu_0|},
\end{aligned}
\end{equation}
we have
\begin{equation}
\begin{aligned}
|\tilde{g}[\mu,\mu_1]|\lesssim &~ |\dot\mu_0|\ln t\left(\frac{t\vartheta} {\mu_0}+\frac{\vartheta}{|\dot\mu_0|}\right)^2+|\dot\mu_0|\left(\frac{t\vartheta} {\mu_0}+\frac{\vartheta}{|\dot\mu_0|}\right)\\
&~+t^{-2} \int_{t_0/2}^{t}
			\Big[ s^{-1} \vartheta(s) \mu_0^{2}(s) + s |\dot\mu(s)| \Big(\frac{t\vartheta} {\mu_0}+\frac{\vartheta}{|\dot\mu_0|}  \Big) \Big]  ds\\
\lesssim&~ \vartheta.
\end{aligned}
\end{equation}
So one has
\begin{equation*}
	\begin{aligned}
		\left|\eta(z) \mu^{-1}  \frac{8 \rho }{ \left( \rho^{2}+1\right)^2}
		\left( \varphi[\mu](r,t) -
		\varphi[\bar{\mu}_0](r,t)
		\right)\right|
		\lesssim  \eta(z) \mu_0^{-1} \vartheta  \langle \rho\rangle^{-3}
	\end{aligned}
\end{equation*}
$\bullet$
\begin{equation*}
\left|\eta(4z) \bar{\mu}_0^{-1}
\mathcal{M}[\bar{\mu}_0]
\frac{\eta(\bar{\rho}) \mathcal{Z}(\bar{\rho}) \rho }{\int_{0}^3 \eta(x) \mathcal{Z}^2(x) xdx }\right|
\lesssim
\mu_0^{-1} t^{-2} \eta(\bar{\rho}) \bar{\rho}^2 .
\end{equation*}
$\bullet$
\begin{align*}
&~	\left|
\eta(4z) \bar{\mu}_0^{-2}
\left(
-
\frac{\cos(2Q_\mu)-\cos(2Q_{\bar{\mu}_0})}{\bar{\rho}^2}
\Phi_e
\right) \right|\\
\lesssim &~\eta(4z) t\vartheta \mu_0^{-2}
\langle \bar{\rho}\rangle^{-4}\begin{cases}
		\min\left\{
		t^{-\frac{\gamma}{2}}
	\langle \bar\rho  \rangle^{-1}
	\ln (\bar\rho+2)
		,
		t^{-\frac{\gamma}{2}} (\ln t)^{-1} \ln( \ln t )
		\right\}
		,
		&
		1<\gamma<2
		\\
		\min\left\{
		t^{-1} \langle \bar\rho  \rangle^{-1}
		\ln (\bar\rho+2),
		t^{-1} (\ln t)^{-2} \ln( \ln t )  \right\} ,
		& \gamma=2
		\\
		\min\left\{
		t^{-1} (\ln t)^{-1} \langle \bar\rho  \rangle^{-1}
		\ln (\bar\rho+2) ,
		t^{-1} (\ln t)^{-2} \ln( \ln t ) \right\},
		&
		\gamma>2 .
	\end{cases}
\end{align*}

\noindent $\bullet$ Since $\Phi_e$ has vanishing at the origin, we have
\begin{align*}
|E_e|\lesssim&~\langle\rho\rangle^{-1}(\varphi+\psi_*)^2+\mu^{-2}\rho^{-1}\langle\rho\rangle^{-2}\Phi_e^2\\
\lesssim&~\langle\rho\rangle^{-1}\Big[\vartheta^2(\ln t)^2+t^{2-2\alpha\nu}(\mu_0 R)^{-4}\vartheta^2+\mu_0^6 R^4 t^{-4}+(\dot\mu_0 \ln t)^2\Big]\\
&~+\langle\rho\rangle^{-3}
\begin{cases}
		\min\left\{
		t^{-\gamma}
	\langle \bar\rho  \rangle^{-2}
	(\ln (\bar\rho+2) )^2
		,
		t^{-\gamma} (\ln t)^{-2} (\ln( \ln t ) )^2
		\right\}
		,
		&
		1<\gamma<2
		\\
		\min\left\{
		t^{-2} \langle \bar\rho  \rangle^{-2}
		(\ln (\bar\rho+2))^2,
		t^{-2} (\ln t)^{-4} (\ln( \ln t ))^2  \right\} ,
		& \gamma=2
		\\
		\min\left\{
		t^{-2} (\ln t)^{-2} \langle \bar\rho  \rangle^{-2}
		(\ln (\bar\rho+2) )^2,
		t^{-2} (\ln t)^{-4} (\ln( \ln t ))^2 \right\},
		&
		\gamma>2 .
	\end{cases}
\end{align*}

For above terms whose $\|\cdot\|_{v,\ell}$-norm to be bounded, we require
\begin{equation}
\begin{cases}
\begin{cases}
\mu_0^3 v^{-1}t^{-\frac{2+\gamma}{2}}R^{\ell-1}\ll1, &\quad 1<\gamma<2\\
\mu_0^3 v^{-1}t^{-2}R^{\ell-1}\ll1, &\quad \gamma\geq 2\\
\end{cases}\\
\begin{cases}
t^{\frac{2-\gamma}{2}}\vartheta v^{-1} \ll1, &\quad 1<\gamma<2\\
\vartheta v^{-1}\ll1, &\quad \gamma\geq 2\\
\end{cases}\\
\mu_0t^{-2} v^{-1}\ll 1, \quad \gamma>1\\
\mu_0^2 R^{\ell-1}v^{-1}\Big[\vartheta^2(\ln t)^2+t^{2-2\alpha\nu}(\mu_0 R)^{-4}\vartheta^2+\mu_0^6 R^4 t^{-4}+(\dot\mu_0 \ln t)^2\Big]\ll 1, \quad \gamma>1\\
\begin{cases}
\mu_0^2 v^{-1} t^{-\gamma} \ll1, &\quad 1<\gamma<2\\
\mu_0^2 v^{-1} t^{-2}\ll1, &\quad \gamma\geq 2\\
\end{cases}\\
\end{cases}
\end{equation}
Recall that $\tau(t)$ is defined in \eqref{def-tau} and
\begin{align*}
\vartheta=\mu_0^a (\mu_0 R)^{-1-a} \tau^{-\kappa}, \quad v^{-1}=\tau^{\kappa}R_0^{5}.
\end{align*}
Then we need
\begin{equation}\label{restriction2}
	\begin{cases}
		\begin{cases}
			2-2\gamma+\kappa(\gamma-1)+\omega(\ell-1)<0, &\quad 1<\gamma<2\\
			\kappa-2+\omega(\ell-1)<0, &\quad \gamma\geq 2\\
		\end{cases}\\
		\begin{cases}
			\omega(\ell-3-2a)-\kappa<0,\\
			\omega(\ell-7-2a)+2-2\alpha \nu-\kappa<0,\\
		\end{cases} \qquad \qquad  \gamma\geq 2\\
		\begin{cases}
			\omega(\ell-3-2a)-\kappa(\gamma-1) <0,\\
			\omega(\ell-7-2a)+2-2\alpha \nu-\kappa(\gamma-1)-2(2-\gamma)<0,\\
			4(1-\gamma)+\omega(\ell+3)+\kappa(\gamma-1)<0,\\
		\end{cases}\qquad \qquad 1<\gamma< 2\\
	\end{cases}
\end{equation}
for the inner problem.

\medskip

\medskip

\section{Linear theory for the inner problem}\label{sec-inner}

\medskip

In this section, we develop a linear theory for the inner problem. We consider
\begin{equation}
\left\{
\begin{aligned}
&\pp_\tau \phi=\mathcal L\phi+f_1\phi+f_2\rho\pp_\rho\phi+h, \qquad (\rho,\tau)\in\mathcal{D}_{R},\\
&\phi(\rho,\tau_0)=0, \qquad \rho\in[0,R(\tau_0)],\\
\end{aligned}
\right.
\end{equation}
where
\begin{align*}
&\tau(t)=\int_{t_0}^t \mu^{-2}(s)ds+\tau_0, \\
&\mathcal L:=\pp_{\rho\rho}+\frac1{\rho}\pp_{\rho}-\frac{\rho^4+1-6\rho^2}{\rho^2(\rho^2+1)^2}, \quad V:=\frac8{(\rho^2+1)^2},\\
&\mathcal{D}_{R} = \{
(\rho,\tau) \ : \
\rho\in [0,R(\tau)], \ \tau \in (\tau_0,\infty)
\},\\
&|h(\rho,\tau)|\lesssim v(\tau)\langle \rho\rangle^{-\ell},
\end{align*}
and we make the following assumptions
\begin{equation}\label{assumptions}
\begin{aligned}
&|f_1(\rho,\tau)|+|f_2(\rho,\tau)|+\rho |\pp_\rho f_2(\rho,\tau)|\leq C_f \tau^{-d}, \quad d>0,~C_f\geq0,\\
&R(\tau), ~v(\tau) \in C^{1}(\tau_0,\infty),\quad v(\tau)>0, \quad
1\ll R(\tau) \ll \tau^{\frac 12} ,\\
&v(\tau) = a_0 \tau^{a_1}(\ln \tau)^{a_2} (\ln\ln \tau)^{a_3} \cdots,\\
&R(\tau) = b_0 \tau^{b_1}(\ln \tau)^{b_2} (\ln\ln \tau)^{b_3} \cdots,\\
&v'(\tau)= O( \tau^{-1} v(\tau) ),\quad R'(\tau)= O(\tau^{-1}R(\tau)),
\end{aligned}
\end{equation}
where $a_0,b_0>0$, $a_{i}, b_{i}\in \RR$, $i=1,2,\dots$. We shall write $v=v(\tau),~R=R(\tau)$ for simplicity. Recall that the linearized operator $\mathcal L$ has kernels
\begin{equation}\label{kernels}
\mathcal Z(\rho)=\frac{\rho}{\rho^2+1},\quad \tilde{\mathcal Z}(\rho)=\frac{\rho^4+4\rho^2\ln\rho-1}{2\rho(\rho^2+1)}.
\end{equation}
Our aim is to find well-behaved $\phi$ for RHS $h$ in the weighted space with norm
\begin{equation*}
	\|h\|_{v, \ell} := \sup_{(\rho,\tau)\in \mathcal D_{ R}} v^{-1}(\tau) \langle \rho \rangle ^{\ell} |h(\rho,\tau)|
\end{equation*}
for some $1<\ell<3$. We have the following
\begin{prop}\label{linear-theory}
Consider
\begin{equation*}
	\begin{cases}
	\pp_{\tau} \phi =	\mathcal{L}  \phi+f_1\phi+f_2\rho\pp_\rho\phi + h(\rho,\tau)
	 \mbox{ \ in \ } \mathcal{D}_R,
		\\
\phi(\cdot,\tau_0) = 0
\mbox{ \ in \ } [0,R(\tau_0)],
	\end{cases}
\end{equation*}
and assume $\tau^d\gg R^2\ln R$.
If $\|h\|_{v,\ell}<+\infty$, then there exists a solution with
\begin{equation}\label{est-444444}
|\phi(\rho,\tau)|\lesssim R^2 \ln R v(\tau)\langle \rho\rangle^{-1}\|h\|_{v,\ell}.
\end{equation}
If in addition the orthogonality condition
\begin{equation}\label{orthog-454545}
\int_0^R h(\rho,\tau) \mathcal Z(\rho) \rho d \rho =0
\end{equation}
holds for all $\tau>\tau_0$, then there exists a solution satisfying
$$
|\phi(\rho,\tau)|\lesssim v(\tau)\|h\|_{v,\ell}\Big[ R^{5-\ell} \ln R \langle \rho\rangle^{-3}+\tau^{-d}v(\tau) R^{7-\ell} (\ln R)^2 \langle \rho\rangle^{-1}\Big].
$$
\end{prop}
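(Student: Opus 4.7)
The plan is to work in three stages. First, I would reduce to the case $f_1 \equiv f_2 \equiv 0$. Since $|f_1| + |f_2| + \rho|\partial_\rho f_2| \lesssim C_f \tau^{-d}$ with $\tau^d \gg R^2 \ln R$, the coupled terms $f_1 \phi + f_2 \rho \partial_\rho \phi$ can be absorbed as a small perturbation through a final contraction step, once the base estimate for the reduced problem $\partial_\tau \phi = \mathcal{L}\phi + h$ is established.

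For the reduced problem I would use an elliptic-first construction. Treating $\tau$ as a parameter, the stationary problem $\mathcal{L}\Phi = h(\cdot,\tau)$ has the variation-of-parameters solution
$$
\Phi(\rho,\tau) = \tilde{\mathcal{Z}}(\rho) \int_0^\rho h(x,\tau)\mathcal{Z}(x) x \, dx - \mathcal{Z}(\rho) \int_0^\rho h(x,\tau)\tilde{\mathcal{Z}}(x) x \, dx
$$
built from the given kernels with Wronskian $\rho^{-1}$. With $|h| \lesssim v \langle \rho \rangle^{-\ell}$ and $1 < \ell < 3$, and the asymptotics $\mathcal{Z}(\rho) \sim \rho^{-1}$, $\tilde{\mathcal{Z}}(\rho) \sim \rho/2$ for large $\rho$, I would extract three estimates: (a) the $\mathcal{Z}$-term satisfies $|\mathcal{Z}(\rho)\int_0^\rho h \tilde{\mathcal{Z}} x \, dx| \lesssim v \rho^{2-\ell} \lesssim v R^{5-\ell} \langle \rho \rangle^{-3}$ on $[0,R]$ thanks to $\ell<3$; (b) without orthogonality, $|\tilde{\mathcal{Z}}(\rho)\int_0^\rho h \mathcal{Z} x \, dx| \lesssim v\rho \lesssim v R^2 \langle \rho \rangle^{-1}$, with the logarithmic factor $\ln R$ coming from the subleading $\ln\rho/\rho$ term in the expansion of $\tilde{\mathcal{Z}}$ at infinity combined with the borderline integrand $vx^{-\ell}\cdot x^{-1}\cdot x$; (c) under the orthogonality hypothesis, rewrite $\int_0^\rho h\mathcal{Z} x\,dx = -\int_\rho^R h\mathcal{Z} x\,dx$ and gain the decay $O(v\rho^{1-\ell}\ln R)$, yielding $|\tilde{\mathcal{Z}}(\rho)\int_0^\rho h \mathcal{Z} x \, dx| \lesssim v R^{5-\ell} \ln R \langle \rho \rangle^{-3}$.

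Next I would promote the elliptic $\Phi$ to a parabolic solution. Writing $\phi = \Phi + \tilde\phi$ reduces to solving $\partial_\tau \tilde\phi = \mathcal{L}\tilde\phi - \partial_\tau \Phi$ with $\tilde\phi(\cdot,\tau_0) = -\Phi(\cdot,\tau_0)$. Under the scaling hypotheses $v'(\tau), R'(\tau) = O(\tau^{-1}(v,R))$, the source $\partial_\tau\Phi$ inherits the same spatial shape as the right-hand side for $\Phi$ but gains an extra $\tau^{-1}$ factor; iterating the elliptic construction on the slowly-moving domain $[0,R(\tau)]$ produces a convergent series whose sum is a genuine solution of the parabolic problem, with the initial condition absorbed in the same iteration. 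Finally, restoring $f_1, f_2$: the perturbative term $f_1\phi + f_2\rho\partial_\rho\phi$, when evaluated on the orthogonal-case estimate, is bounded by $C\tau^{-d} v R^{5-\ell}\ln R \langle\rho\rangle^{-3}$. This new source is not orthogonal to $\mathcal{Z}$, so applying bound (b) with effective weight $\tilde v := \tau^{-d} v R^{5-\ell}\ln R$ produces a correction of size $\tilde v R^2 \ln R \langle \rho \rangle^{-1} = \tau^{-d} v R^{7-\ell}(\ln R)^2 \langle \rho \rangle^{-1}$, matching the second term in the claim; the assumption $\tau^d \gg R^2\ln R$ ensures the iteration is a contraction.

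The main obstacle I anticipate is the careful accounting of the $\ln R$ factor in estimate (b), which requires tracking the subleading $\ln\rho/\rho$ term of $\tilde{\mathcal{Z}}$ at infinity and the near-cancellation of $\int_0^\rho h\mathcal{Z}x\,dx$ as $\rho \to R$ when the effective exponent is close to the threshold. A secondary difficulty is handling the parabolic correction $\tilde\phi$ on the non-autonomous domain $[0,R(\tau)]$, where the slow motion $R'/R = O(\tau^{-1})$ must be exploited and the boundary behavior at $\rho = 2R$ matched to the outer problem via a cut-off, in the standard inner--outer gluing fashion.
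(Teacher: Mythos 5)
Your elliptic-first construction for the base estimate~\eqref{est-444444} has a genuine gap: it implicitly requires control on $\partial_\tau h$, which the hypotheses do not provide. You set $\Phi(\cdot,\tau)=\mathcal L^{-1}h(\cdot,\tau)$ and then claim that the parabolic remainder has source $\partial_\tau\Phi$ which ``gains an extra $\tau^{-1}$ factor'' from the regularity of $v(\tau)$ and $R(\tau)$. But the norm $\|h\|_{v,\ell}$ only bounds $|h|$ pointwise; $h$ can oscillate arbitrarily fast in $\tau$ while still satisfying $|h|\le v\langle\rho\rangle^{-\ell}$, so $\partial_\tau\Phi$ is not controlled by $\tau^{-1}\Phi$. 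Relatedly, your elliptic estimate~(b) yields $|\Phi|\lesssim v\rho\lesssim vR^2\langle\rho\rangle^{-1}$ \emph{without} the $\ln R$ factor, which should be a warning: $\mathcal L^{-1}h$ is \emph{not} a solution of the parabolic problem with the boundary condition $\phi=0$ at $\rho=R(\tau)$ and initial data zero, and fixing up those conditions is exactly where the near-zero mode of $\mathcal L$ on $[0,R]$ (with eigenvalue $\sim(R^2\ln R)^{-1}$, reflected in the coercivity estimate) produces the $\ln R$.

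The paper sidesteps both issues by proving the non-orthogonal estimate entirely by an energy method: testing against $\rho\phi$, using the coercivity $Q_R(\phi,\phi)\gtrsim(R^2\ln R)^{-1}\|\phi\|^2_{L^2(B_R)}$, Gr\"onwall, then testing against $\rho\mathcal L\phi$ to upgrade to the energy norm and the $L^\infty$ embedding, and finally a heat-kernel convolution to extract the $\langle\rho\rangle^{-1}$ decay. This needs no $\tau$-regularity of $h$ and automatically incorporates the boundary and initial conditions with the correct $R^2\ln R$ loss. For the orthogonal case, the paper \emph{does} begin with the elliptic solve $\mathcal L H=h$ (as you do), but then solves the \emph{parabolic} problem $\partial_\tau\Phi=\mathcal L\Phi+H$ using the already-established energy estimate, and finally defines the true solution as $\phi=\mathcal L\Phi+\tilde\phi$. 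Since $\partial_\tau(\mathcal L\Phi)=\mathcal L(\mathcal L\Phi)+\mathcal L H=\mathcal L(\mathcal L\Phi)+h$, the operator $\mathcal L$ is applied to a function whose $\tau$-derivative structure is already parabolic, so no $\partial_\tau h$ ever appears; the potential in $\mathcal L$ then supplies the extra $\langle\rho\rangle^{-2}$ decay. Your treatment of $f_1,f_2$ by a final contraction and your identification of the extra term $\tau^{-d}vR^{7-\ell}(\ln R)^2\langle\rho\rangle^{-1}$ are on the right track, but they sit on top of a base estimate whose derivation, as proposed, does not close.
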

\begin{proof}
We first show the linear estimates without orthogonality condition. We look for solution to
\begin{equation}\label{eqn-444444}
	\begin{cases}
	\pp_{\tau} \phi =	\mathcal{L}  \phi+f_1\phi+f_2\rho\pp_\rho\phi + h(\rho,\tau)
	 \mbox{ \ in \ } \mathcal{D}_R,
		\\
 \phi = 0
 \mbox{ \ on \ } \pp\mathcal{D}_R,
\quad
\phi(\cdot,\tau_0) = 0
\mbox{ \ in \ } [0,R(\tau_0)],
	\end{cases}
\end{equation}
where
$$
\pp \mathcal{D}_{R} = \{
(\rho,\tau) \ : \
\rho=R(\tau), \ \tau \in (\tau_0,\infty)
\} .
$$
We use the notation
$$
\|f\|_{L^2(B_R)}^2:=\int_0^R f^2(\rho)\rho d\rho,\quad Q_R(f,f):=-\int_0^{2R} \mathcal L(\phi) \phi\rho d\rho,
$$ and test above equation with $\rho\phi$ to get
\begin{equation}\label{eqn-454545}
\begin{aligned}
\frac12\pp_\tau\|\phi\|_{L^2(B_R)}^2+Q_R(\phi,\phi)=&~\int_0^R f_1\phi^2 \rho d\rho-\int_0^R \rho f_2 \phi^2-\frac12 \int_0^R \rho^2 \phi^2 \pp_\rho f_2+\int_0^R h\phi\rho d\rho\\
\leq &~\frac52 C_f \tau^{-d}\|\phi\|_{L^2(B_R)}^2+\|\phi\|_{L^2(B_R)}\|h\|_{L^2(B_R)}.
\end{aligned}
\end{equation}
By a coercive estimate in \cite[Lemma 9.2]{LLG}
$$
Q_R(\phi,\phi)\gtrsim \frac1{R^2\ln R} \|\phi\|_{L^2(B_R)}^2,
$$
one has
$$
\pp_\tau\|\phi\|_{L^2(B_R)}^2+\frac1{R^2\ln R}\|\phi\|_{L^2(B_R)}^2\lesssim R^2 \ln R \|h\|_{L^2(B_R)}^2
$$
provided
$$
\tau^d \gg R^2\ln R.
$$
Then Gr\"onwall's inequality yields
$$
\|\phi\|_{L^2(B_R)}\lesssim R^2 \ln R \|h\|_{L^2(B_R)}\lesssim R^2\ln R v(\tau)\|h\|_{v,\ell}
$$
To get the pointwise control, we introduce the energy norm
$$
\|f\|_{X(B_R)}^2 \int_0^R \left((\pp_\rho f)^2+\frac{f^2}{\rho^2}\right) \rho d\rho,
$$
and the following embedding holds (cf. \cite[page 216]{Gustafson10})
\begin{equation}\label{embedding}
\|f\|^2_{L^{\infty}(B_R)}\leq \|f\|_{X(B_R)}^2.
\end{equation}
Integrating both sides of \eqref{eqn-454545} implies
$$
\int_\tau^{\tau+1} Q_R(\phi,\phi)\lesssim R^4(\ln R)^2 v^2(\tau)\|h\|^2_{v,\ell},
$$
and thus
\begin{equation}\label{eqn-464646}
Q_R(\phi,\phi)(\tilde\tau)\lesssim R^4(\ln R)^2 v^2(\tau)\|h\|^2_{v,\ell}
\end{equation}
for some $\tilde\tau\in(\tau,\tau+1)$. Next we multiply equation \eqref{eqn-444444} by $\rho\mathcal L\phi$ and integrate by parts
$$
-\frac12\pp_{\tau}Q_R(\phi,\phi)\lesssim \|\mathcal L \phi\|^2_{L^2(B_R)}+\tau^{-2d}\|\phi\|_{L^2(B_R)}^2+\int_0^R h\mathcal L\phi \rho d\rho.
$$
Then using Young's inequality we get
$$
\pp_\tau Q_R(\phi,\phi)\lesssim v^2(\tau)\|h\|^2_{v,\ell}
$$
since $\tau^d \gg R^2\ln R$. By above inequality and \eqref{eqn-464646}, we obtain
$$
Q_R(\phi,\phi)(\tau+1)\lesssim R^4(\ln R)^2 v^2(\tau)\|h\|^2_{v,\ell}.
$$
By the arbitrariness of $\tau$ here and the initial condition $\phi(\cdot,\tau_0) = 0$ as well as the embedding \eqref{embedding}, we have
\begin{equation}\label{eqn-484848}
\|\phi\|_{L^{\infty}(B_R)}\lesssim \|\phi\|_{X(B_R)}\lesssim [Q_R(\phi,\phi)(\tau)]^\frac12+\|\phi\|_{L^2(B_R)}\lesssim R^2 \ln R v(\tau)\|h\|_{v,\ell}.
\end{equation}
Now we upgrade above pointwise control to estimate with spatial decay. We write equation \eqref{eqn-444444} as
\begin{equation*}
	\begin{cases}
	\pp_{\tau} \phi =	\left(\pp_{\rho\rho}+\frac1{\rho}\pp_\rho \phi-\frac1{\rho^2}\phi\right)+\tilde h
	 \mbox{ \ in \ } \mathcal{D}_R,
		\\
 \phi = 0
 \mbox{ \ on \ } \pp\mathcal{D}_R,
\quad
\phi(\cdot,\tau_0) = 0
\mbox{ \ in \ } [0,R(\tau_0)],
	\end{cases}
\end{equation*}
where
$$
\tilde h:=\frac{8\phi}{(\rho^2+1)^2}+f_1\phi+f_2\rho\pp_\rho\phi + h(\rho,\tau).
$$
So we have
\begin{align*}
|\phi|\lesssim&~\rho\left|\Gamma_4\bullet (\rho^{-1}|\tilde h|\1_{\rho\leq R(\tau)})\right|\lesssim R^2 \ln R v(\tau)\langle \rho\rangle^{-1}\|h\|_{v,\ell}.
\end{align*}
where $\Gamma_4$ is the heat kernel in $\R^4$, and we have used the fact $\tau^d\gg R^2 \ln R$ and the convolution estimates in \cite[Lemma A.2]{LLG}. The proof of \eqref{est-444444} is complete.

\medskip

Next, we handle the case with orthogonality condition. We first consider an elliptic problem
$$
\mathcal L H=h.
$$
By expressing
$$
H(\rho,\tau)=\tilde{\mathcal Z}(\rho)\int_0^{\rho}h(s,\tau)\mathcal Z(s) s ds-\mathcal Z(\rho)\int_0^\rho h(s,\tau)\tilde{\mathcal Z}(s) sds
$$
and using orthogonality \eqref{orthog-454545}, we get
\begin{equation}
\|H\|_{v,\ell-2}\lesssim \|h\|_{v,\ell}
\end{equation}
since $1<\ell<3$. We now consider
\begin{equation}\label{eqn-412412412}
	\begin{cases}
	\pp_{\tau} \Phi =	\mathcal{L}  \Phi+ H(\rho,\tau)
	 \mbox{ \ in \ } \mathcal{D}_{2R},
		\\
 \Phi = 0
 \mbox{ \ on \ } \pp\mathcal{D}_{2R},
\quad
\Phi(\cdot,\tau_0) = 0
\mbox{ \ in \ } [0,2R(\tau_0)].
	\end{cases}
\end{equation}
Similar to above process of getting non-orthogonal linear theory, we have
$$
|\Phi(\rho,\tau)|\lesssim v(\tau) R^{5-\ell} \ln R \langle \rho\rangle^{-1}\|H\|_{v,\ell-2}.
$$
Above pointwise estimate together with a scaling argument yield
$$
|\Phi|+\langle\rho\rangle|\pp_\rho \Phi|+\langle\rho\rangle^2|\pp_{\rho\rho} \Phi|\lesssim v(\tau) R^{5-\ell} \ln R \langle \rho\rangle^{-1}\|H\|_{v,\ell-2}.
$$
So we have
$$
\pp_\tau(\mathcal L\Phi)=\mathcal L(\mathcal L\Phi)+h
$$
with
$$
|\mathcal L\Phi|+\langle\rho\rangle|\pp_\rho (\mathcal L\Phi)|\lesssim v(\tau) R^{5-\ell} \ln R \langle \rho\rangle^{-3}\|H\|_{v,\ell-2}.
$$
We want to find a desired solution $\phi$ and consider the remainder
$$
\tilde\phi =\phi-\mathcal L\Phi
$$
which solves
$$
\pp_\tau \tilde\phi=\mathcal L\tilde\phi+f_1\tilde\phi+f_2 \rho\pp_\rho \tilde\phi+f_1\mathcal L\Phi+f_2\rho\pp_\rho(\mathcal L \Phi).
$$
By above non-orthogonal linear theory, we have the following control for $\tilde\phi$
$$
|\tilde\phi|\lesssim \tau^{-d}v(\tau) R^{7-\ell} (\ln R)^2 \langle \rho\rangle^{-1}\|h\|_{v,\ell},
$$
and thus
$$
|\phi|\lesssim v(\tau)\|h\|_{v,\ell}\Big[ R^{5-\ell} \ln R \langle \rho\rangle^{-3}+\tau^{-d}v(\tau) R^{7-\ell} (\ln R)^2 \langle \rho\rangle^{-1}\Big]
$$
as desired.
\end{proof}

Next we perform another re-gluing procedure to further improve the linear theory with orthogonality. We have
\begin{prop}
\label{R0-linear}
	Consider
	\begin{equation*}
		\begin{cases}
			\pp_{\tau} \phi=\mathcal L \phi  + f_1 \phi + f_2 \rho\pp_\rho \phi + h(\rho,\tau) +
			c(\tau) \eta(\rho) \mathcal Z(\rho)
			& \mbox{ in } \mathcal D_{R},
			\\
			\phi(\rho,\tau_0)=0 & \mbox{ in } [0,R(\tau_0)],
		\end{cases}
	\end{equation*}
where $\|h\|_{v,\ell}<\infty$ with $1<\ell<3$. Assume $\tau^{d}\gg \max\{R^2,~R_0^6\}$, $R_0=c_1\tau^{\delta}$ for some $\delta\ge 0$ and $c_1>0$, then for $\tau_0$ sufficiently large, there exists  $(\phi,c(\tau))$ solving above equation, and $(\phi,c)=(\TT_{3i}[h],c[h] )$ defines a linear mapping of $h$ with the estimates
	\begin{equation*}
		\langle \rho\rangle |\pp_\rho \phi| +	|\phi|
		\lesssim  R_0^{6-\ell}\ln R_0 v(\tau)
		\langle \rho \rangle^{-a} \|h\|_{v,\ell} , \quad a<\ell-2,
	\end{equation*}
	\begin{equation*}
		\begin{aligned}
			c[h](\tau)
			= \ &
			-\left(\int_0^2 \eta(\rho) \mathcal Z^2(\rho) \rho d\rho\right)^{-1} \left(\int_0^{2R_0}
			h(\rho,\tau) \mathcal Z(\rho)\rho d\rho
			+
			R_0^{-\epsilon_{0}}  O( v \|h \|_{v,\ell} )  \right)
		\end{aligned}
	\end{equation*}
for some $\epsilon_0>0$, and $O( v \|h \|_{v,\ell} )$ depends linearly on $h$.
\end{prop}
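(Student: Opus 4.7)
The strategy is a re-gluing argument that replaces the radius $R$ appearing in Proposition~\ref{linear-theory} by the much smaller $R_0$, exploiting the freedom in $c(\tau)$ to impose an orthogonality condition only on $B_{2R_0}$ rather than on $B_R$. The key observation is that $\eta(\rho)\mathcal Z(\rho)$ is compactly supported in $[0,2]$, so pairing $c(\tau)\eta\mathcal Z$ against $\mathcal Z$ on $[0,2R_0]$ produces only the $R_0$-independent constant $\int_0^2\eta\mathcal Z^2\rho\,d\rho$, which makes this projection well-conditioned.

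I would first define $c(\tau)$ so that
\[
\int_0^{2R_0}\bigl(h(\rho,\tau)+c(\tau)\eta(\rho)\mathcal Z(\rho)\bigr)\mathcal Z(\rho)\rho\, d\rho=0,
\]
giving the leading expression stated in the proposition. Then I would write $\phi=\chi(\rho)\phi_{\mathrm{in}}+\phi_{\mathrm{out}}$, where $\chi$ is a smooth cutoff equal to $1$ on $[0,R_0]$ and vanishing for $\rho\geq 2R_0$, and $\phi_{\mathrm{in}}$ is obtained by applying the orthogonal part of Proposition~\ref{linear-theory} on the smaller domain $B_{2R_0}$ to the modified source $h+c\eta\mathcal Z$. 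Under the hypothesis $\tau^d\gg R_0^6$, this delivers
\[
|\phi_{\mathrm{in}}|\lesssim v(\tau)\|h\|_{v,\ell}\bigl[R_0^{5-\ell}\ln R_0\,\langle\rho\rangle^{-3}+\tau^{-d}v(\tau)R_0^{7-\ell}(\ln R_0)^2\langle\rho\rangle^{-1}\bigr].
\]

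Next I would solve the residual equation for $\phi_{\mathrm{out}}$ in the full domain $\mathcal D_R$, whose right-hand side consists of (i) the uncompensated tail of $h$ in $\{\rho>2R_0\}$ and (ii) the cutoff commutator $[\partial_\tau-\mathcal L-f_1-f_2\rho\partial_\rho,\chi]\phi_{\mathrm{in}}$, which is localised in the annulus $\{R_0\leq\rho\leq 2R_0\}$ and controlled by the bound on $\phi_{\mathrm{in}}$. Since this source no longer enjoys any orthogonality, I would apply the non-orthogonal part of Proposition~\ref{linear-theory}; the spatial decay $\langle\rho\rangle^{-a}$ with $a<\ell-2$ is then recovered by the same $\mathcal L^{-1}$ quadrature against the explicit kernels $\mathcal Z,\widetilde{\mathcal Z}$ from \eqref{kernels} used in the orthogonal case of that proposition, with all factors of $R$ in the final bound traded for $R_0$ thanks to the localisation. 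The gradient estimate $\langle\rho\rangle|\partial_\rho\phi|\lesssim|\phi|$ follows by a standard parabolic rescaling on balls of radius $\sim\langle\rho\rangle$ and interior regularity.

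The main obstacle I foresee is the bookkeeping at the interface $\rho\simeq 2R_0$: the outer source depends implicitly on $\phi_{\mathrm{in}}$, which in turn depends on the exact choice of $c$. Closing this loop requires a contraction argument showing both that the fixed point exists and that the correction forced upon $c(\tau)$ by matching the inner and outer pieces is no larger than $R_0^{-\epsilon_0}v\|h\|_{v,\ell}$, which is precisely the origin of the error term appearing in the stated formula for $c[h](\tau)$. The linearity of the mapping $h\mapsto(\phi,c)$ is automatic, since every step of the construction (the choice of $c$, the two applications of Proposition~\ref{linear-theory}, and the matching) is linear in $h$.
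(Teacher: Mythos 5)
Your overall strategy matches the paper's: decompose $\phi$ into an inner piece supported near the origin and an outer remainder, use the freedom in $c(\tau)$ to impose orthogonality on $[0,2R_0]$, note that $\int_0^2\eta\mathcal Z^2\rho\,d\rho$ is an $R_0$-independent constant so the projection is well-conditioned, and close a contraction argument to account for the feedback between the two pieces. The formula you write for the leading part of $c[h]$ and the size $R_0^{-\epsilon_0}v\|h\|_{v,\ell}$ of the correction are exactly what the paper finds.

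However, there is a genuine gap in how you propose to estimate $\phi_{\mathrm{out}}$. You say you would ``apply the non-orthogonal part of Proposition~\ref{linear-theory}'' to the residual on the full domain $\mathcal D_R$ and that ``all factors of $R$ in the final bound [are] traded for $R_0$ thanks to the localisation.'' This is not true for the operator $\mathcal L$: the non-orthogonal estimate in Proposition~\ref{linear-theory} is $|\phi|\lesssim R^2\ln R\,v\langle\rho\rangle^{-1}\|h\|_{v,\ell}$, and the factor $R^2\ln R$ comes from the coercivity constant of $Q_R$ on $B_R$, which degenerates because $\mathcal Z\sim\rho^{-1}$ barely fails to be in $L^2(\R^4)$. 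That loss is a property of the large domain, not of the size of the support of the source, so localising $h$ near $\rho\sim R_0$ does not remove it. With $R\gg R_0$, the resulting bound $R^2\ln R\cdot R_0^{-\epsilon_0}v\|h\|_{v,\ell}$ cannot be absorbed into the target $R_0^{6-\ell}\ln R_0\,v\langle\rho\rangle^{-a}\|h\|_{v,\ell}$.

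The paper avoids this by making the outer piece $\phi_o$ solve the \emph{plain} radial $4$D heat equation $\pp_\tau\phi_o=\pp_{\rho\rho}\phi_o+\rho^{-1}\pp_\rho\phi_o-\rho^{-2}\phi_o+J[\phi_o,\phi_i]$, i.e.\ the operator with potential $V=8(\rho^2+1)^{-2}$ removed. This operator is positive and has no near-kernel, so one gets $R$-independent estimates via explicit supersolutions (barriers of the form $Cv\,\rho(-\Delta_{\R^4})^{-1}[\langle\rho\rangle^{-\ell_1-1}]$). The coupling term $V\phi_o$ is then fed into the inner equation, and its projection onto $\mathcal Z$ on $[0,2R_0]$ is precisely what gives rise to the $R_0^{-\epsilon_0}$-correction in $c(\tau)$. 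Your argument would be repaired if you replaced ``apply the non-orthogonal part of Proposition~\ref{linear-theory} to $\phi_{\mathrm{out}}$'' by ``solve $\phi_{\mathrm{out}}$ for the $V$-free heat operator, and move $V\phi_{\mathrm{out}}$ into the inner source (adjusting $c$ accordingly).'' As written, the proposal does not reach the stated $R_0$-only bound.
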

\begin{proof}
We decompose
$$\phi(\rho,\tau) = \eta_{R_0} \phi_{i}(\rho,\tau) + \phi_{o}(\rho,\tau),$$
where $\eta_{R_0} = \eta(\frac{\rho}{R_0})$. In order to find a solution $\phi$, it suffices to  find $(\phi_i,\phi_o)$ such that
	\begin{equation}
		\label{phio-eq}
		\begin{cases}
			\pp_{\tau} \phi_{o}
			=
			\pp_{\rho\rho}\phi_{o}+\frac1{\rho}\pp_\rho  \phi_{o}-\frac1{\rho^2}\phi_{o}
			+
			J[\phi_{o},\phi_{i}]
			\mbox{ \ in \ } \DD_{R},
			\\
			\phi_{o} = 0
			\mbox{ \ on \ } \pp \DD_{R},\quad
			\phi_{o} = 0
			\mbox{ \ in \ } [0,R(\tau_0)] ,
		\end{cases}
	\end{equation}
	\begin{equation}
		\label{phii-eq}
		\begin{cases}
			\pp_{\tau} \phi_{i}
			=
			\mathcal L\phi_i
			+ f_1 \phi_i
			+
			f_2 \rho\pp_\rho\phi_i
			+
			V\phi_o
			+ h
			+
			c(\tau) \eta(\rho) \mathcal Z(\rho)
			&
			\mbox{ \ in \ } \DD_{2R_0},
			\\
			\phi_{i} = 0
			&
			\mbox{ \ in \ } B_{2R(\tau_0)} ,
		\end{cases}
	\end{equation}
	where
	\begin{equation*}
		\begin{aligned}
			J[\phi_{o},\phi_{i}]= \ &
			f_1 \phi_o + f_2 \rho\pp_\rho\phi_o
			+
			(
			1-\eta_{R_0}
			)V \phi_o
			+
			A[\phi_{i}] + h(1-\eta_{R_0}) ,
			\\
			A[\phi_{i}] = \ &
			 \phi_i(\pp_{\rho\rho}\eta_{R_0}+\frac1{\rho}\pp_\rho \eta_{R_0}) + 2\pp_\rho\eta_{R_0} \pp_\rho\phi_i + f_2 \rho\pp_\rho\eta_{R_0} \phi_i -\pp_{\tau} \eta_{R_0} \phi_i ,
		\end{aligned}
	\end{equation*}
and
	\begin{equation*}
		c(\tau) = c[\phi_{o}](\tau) =
		C \int_{0}^{2R_0}
		[V (\rho) \phi_o(\rho,\tau)
		+ h(\rho,\tau) ] \mathcal Z(\rho)\rho d\rho,\quad
		C =  -(\int_{B_{2} } \eta(\rho) \mathcal Z^2(\rho) \rho d\rho)^{-1}.
	\end{equation*}

	We reformulate \eqref{phio-eq} and \eqref{phii-eq} into the following operators	\begin{equation}\label{z6}
		\begin{aligned}
			\phi_{o}(y,\tau) = \ &
			\TT_{o} [
			J[\phi_{o},\phi_{i}]  ] ,
			\quad
			\phi_{i}(y,\tau) =
			\TT_{2i} \left[
			V \phi_o
			+ h
			+
			c(\tau) \eta(\rho) \mathcal Z(\rho) \right] ,
		\end{aligned}
	\end{equation}
	where $\TT_{o}$ is a linear mapping given by the standard parabolic theory, and $\TT_{2i}$ is given by Proposition \ref{linear-theory}.
	We now solve the system \eqref{z6} by the contraction mapping theorem.
The leading part of the RHS in \eqref{phii-eq} is
 $$H_{1} := h+  C \eta(\rho) \mathcal Z(\rho) \int_0^{2R_0}
	h(\rho,\tau) \mathcal Z(\rho) \rho d\rho .$$
Clearly, $\| H_{1} \|_{v,\ell} \lesssim \|h\|_{v,\ell}$. If $H_1$ satisfies the orthogonality condition in $\mathcal{D}_{2R_0}$, then Proposition \ref{linear-theory} gives the a priori estimate
	\begin{equation*}
		\langle \rho \rangle
		|\pp_\rho \TT_{2i}[H_1]|
		+	|\TT_{2i}[H_1] | \le
		D_i   w_{i}(\rho,\tau)
		\end{equation*}
	provided $\tau^d\gg R_0^6$,
	where  $D_{i}\ge 1$ is a constant and
	\begin{equation*}
		w_{i}(\rho,\tau) =
		v(\tau)\| h \|_{v,\ell} \left(
		R_0^{5-\ell} \ln R_0 \langle \rho\rangle^{-3}+v(\tau) R_0^{1-\ell} (\ln R_0)^2 \langle \rho\rangle^{-1}
		 \right)
			\end{equation*}
So we will choose the space for the inner solution as
	\begin{equation*}
		\B_{i} =
		\left\{
		g(\rho,\tau) \ : \
		\langle \rho \rangle |\pp_\rho g(\rho,\tau)|  + |g(\rho,\tau)| \le
		2D_i  w_{i}(\rho,\tau)
		\right\} .
	\end{equation*}
	For any $\tilde{\phi}_{i}\in \B_{i}$, we will find a solution $\phi_{o} = \phi_{o}[\tilde{\phi}_{i}]$ of \eqref{phio-eq} by the fixed point argument. Let us estimate $J[0,\tilde{\phi}_{i}]$ term by term
	\begin{equation*}
		|A[\tilde{\phi}_{i}] |
		\lesssim
		D_i v
		R_{0}^{-\epsilon_0}
		\langle y \rangle^{-\ell_1}
		\| h \|_{v,\ell}
	\end{equation*}
for some $\epsilon_0>0$ and $\ell_1<\ell$. Also we have
	\begin{equation*}
		| h (1-\eta_{R_0})| \lesssim
		v
		R_{0}^{-\epsilon_0}
		\langle y \rangle^{-\ell_1}
		\| h \|_{v,\ell}
		.
	\end{equation*}
	Consider \eqref{phio-eq} with the right hand side $J[0,\tilde{\phi}_{i}]$. Using $C v \rho(-\Delta_{\R^4})^{-1}[\langle \rho\rangle^{-\ell_1-1}] R_{0}^{-\epsilon_0}
	\| h \|_{v,\ell}$ as the barrier function with a large constant $C$ and then scaling argument, we have
	\begin{equation*}
	\langle \rho\rangle |\pp_\rho \TT_o[J[0,\tilde{\phi}_{i}]](\rho,\tau)| +	|\TT_o[J[0,\tilde{\phi}_{i}]] (\rho,\tau) |
		\le
		w_{o}(\rho,\tau) =
		D_o	D_i v
		R_{0}^{-\epsilon_0}
		\langle \rho \rangle^{2-\ell_1}
		\| h \|_{v,\ell}
	\end{equation*}
	with a large constant $D_o\ge 1$.
	This suggests that we solve $\phi_o$ in the following space:
	\begin{equation*}
		\B_{o} =
		\left\{
		f(\rho,\tau) \ : \
		\langle \rho\rangle |\pp_\rho f(\rho,\tau)|  + |f(\rho,\tau)| \le
		2w_{o}(\rho,\tau)
		\right\} .
	\end{equation*}
	For any $\tilde{\phi}_{o} \in \B_{o}$, since $\rho \le 2 R(\tau)$, we have
	\begin{equation*}
		|V\tilde{\phi}_o(
		1-\eta_{R_0}
		)| \lesssim
		R_{0}^{-2} D_o	D_i v
		R_{0}^{-\epsilon_0}
		\langle \rho \rangle^{-\ell_1}
		\| h \|_{v,\ell} ,
	\end{equation*}
	\begin{equation*}
		| f_1\phi_o + f_2 \rho\pp_\rho \phi_o| \lesssim
		\tau^{-d} R^{2}(\tau)
		D_o	D_i v
		R_{0}^{-\epsilon_0}
		\langle \rho \rangle^{-\ell_1}
		\| h \|_{v,\ell}.
	\end{equation*}
Since $\tau^{-d} R^{2}$, $R_{0}^{-2}\ll 1$, by comparison principle, we have
	\begin{equation*}
		\TT_{o}[
		J[\tilde{\phi}_{o},\tilde{\phi}_{i}] ]  \in \B_{o} ,
	\end{equation*}
and the mapping is a contraction.
	
	Now we have found a solution $\phi_{o} = \phi_{o}[\tilde{\phi}_{i}]\in \B_{o}$. It follows that
	\begin{equation*}
		\left\|
		V \phi_{o}[\tilde{\phi}_{i}]
		+
		C \left[\int_{0}^{2R_0}
		V(\rho) \phi_{o}[\tilde{\phi}_{i}] (\rho,\tau) \mathcal Z(\rho) \rho d\rho\right] \eta(\rho) \mathcal Z(\rho)
		\right\|_{v,\ell}
		\lesssim
		D_o D_i
		R_0^{-\epsilon_0} \| h\|_{v,\ell}.
	\end{equation*}
	Thanks to the choice of $c(\tau)$, $H_2 := V(\rho) \phi_{o}[\tilde{\phi}_{i}]
	+ h
	+
	c[\phi_{o}[\tilde{\phi}_{i}]](\tau) \eta(\rho) \mathcal Z(\rho)$ satisfies the orthogonality condition in $\DD_{2R_0}$.
	By Proposition \ref{linear-theory}, we get
	\begin{equation*}
		\TT_{2i}[h_2] \in \B_{i}
	\end{equation*}
since $R_0^{-\epsilon_{0}}\ll 1$, and similarly it is a contraction mapping. Thus we find a solution
	\begin{equation}\label{phii-upp}
\phi_{i}=\phi_{i}[h] \in \B_{i} ,
\end{equation}
and we obtain a solution
	$(\phi_{o},\phi_{i})$ for \eqref{phio-eq} and \eqref{phii-eq} in the chosen spaces.
		
	Since $\phi_{o}[h] \in \B_{o}$, one has
	\begin{equation*}
		c[h](\tau)
		=
		C \int_{0}^{2R_0}
		h(\rho,\tau)  \mathcal Z(\rho)\rho d\rho
		+
		R_0^{-\epsilon_{0}}  O( v) \|h\|_{v,\ell}
		.
	\end{equation*}
We also have
	\begin{equation*}
		|J[0,\phi_{i}]|  \lesssim
		R_0
		v\langle \rho\rangle^{- \ell } \| h\|_{v,\ell} .
	\end{equation*}
	Using comparison principle to \eqref{phio-eq} repeatedly, we have a refined bound	\begin{equation}\label{phio-upp}
		|\phi_{o}| \lesssim
		R_0
		v\langle \rho\rangle^{2-\ell } \| h\|_{v,\ell} .
	\end{equation}
	Combining \eqref{phii-upp}, \eqref{phio-upp} and then using scaling argument, we conclude
	\begin{equation*}
	\langle \rho\rangle |\pp_\rho \phi| + |\phi|
		\lesssim  R_0^{6-\ell}\ln R_0 v
		\langle \rho \rangle^{-a} \|h\|_{v,\ell}
	\end{equation*}
with $a<\ell -2$.
\end{proof}

\medskip

\section*{Acknowledgements}

\medskip

J. Wei is partially supported by NSERC of Canada.

\medskip






\end{document}